\theoremstyle{plain}
\patchcmd\Gread@eps{\@inputcheck#1 }{\@inputcheck"#1"\relax}{}{}
\numberwithin{equation}{section}
\newtheorem{theorem}{Theorem}[section]
\newtheorem{lemma}[theorem]{Lemma}
\newtheorem{corollary}[theorem]{Corollary}
\theoremstyle{definition}
\newcommand{\appsection}[1]{\let\oldthesection\thesection
\renewcommand{\thesection}{Appendix \oldthesection}
\section{#1}\let\thesection\oldthesection}
\newtheorem{definition}[theorem]{Definition}
\newtheorem{notation}[theorem]{Notation}
\theoremstyle{remark}
\newtheorem{remark}[theorem]{Remark}
\newtheorem{example}[theorem]{Example}
\DeclareMathOperator{\spec}{Spec}
\def\Z{{\mathbb{Z}}}
\def\Q{{\mathbb{Q}}}
\def\C{{\mathbb{C}}}
\def\P{{\mathbb{P}}}
\def\O{{\mathcal{O}}}
\def\L{{\mathcal{L}}}
\def\M{{\mathcal{M}}}
\def\N{{\mathcal{N}}}
\begin{document}
\title{Families of explicit quasi-hyperbolic and hyperbolic surfaces}
%
\author[Natalia Garcia-Fritz]{Natalia Garcia-Fritz}
\email{natalia.garcia@mat.uc.cl}
\address{Facultad de Matem\'aticas, Pontificia Universidad Cat\'olica de Chile, Campus San Joaqu\'in, Avenida Vicu\~na Mackenna 4860, Santiago, Chile.}
\author[Giancarlo Urz\'ua]{Giancarlo Urz\'ua}
\email{urzua@mat.uc.cl}
\address{Facultad de Matem\'aticas, Pontificia Universidad Cat\'olica de Chile, Campus San Joaqu\'in, Avenida Vicu\~na Mackenna 4860, Santiago, Chile.}
%
%

\maketitle


\date{\today}

\begin{abstract}
We construct explicit families of quasi-hyperbolic and hyperbolic surfaces. This is based on earlier work of Vojta, and the recent expansion and generalization of it by the first author. In this paper we further extend it to the singular case, obtaining results for the surface of cuboids, the generalized surfaces of cuboids, and other families of Diophantine surfaces of general type. In particular, we produce explicit families of smooth complete intersection surfaces of multidegrees $(m_1,\ldots,m_n)$ in $\P^{n+2}$ which are hyperbolic, for any $n \geq 8$ and any degrees $m_i \geq 2$. We also show similar results for complete intersection surfaces in $\P^{n+2}$ for $n=4,5,6,7$. These families give evidence for \cite[Conjecture 0.18]{Dem18} in the case of surfaces.
\end{abstract}

%

\section{Introduction} \label{s0}

The purpose of this paper is to give an explicit method to find low genus curves in a wide range of algebraic surfaces. The method is based on an earlier work of Vojta \cite{V00}, which has roots in the seminal work of Bogomolov \cite{B77} (see \cite{D79}), and the recent expansion and generalization of Vojta's method by the first author \cite{GF15}. In this paper we further extend it to the singular case.  

In addition, we show that the method allows us to test hyperbolicity on these surfaces. In particular, we show new examples of families of quasi-hyperbolic and hyperbolic surfaces. This part is based on Nevanlinna theory (cf.\ \cite{V11}). We recall some definitions to be precise. An \textit{entire curve} in a variety $X$ is the image of a nonconstant holomorphic map $\C \to X$. A surface $X$ is said to be \emph{quasi-hyperbolic} if all entire curves are contained in a proper Zariski closed subset of $X$. A surface $X$ is said to be \textit{hyperbolic} if it has no entire curves. Hence, when $X$ is a smooth projective surface, we have that $X$ is hyperbolic if it is in the sense of Kobayashi or in the sense of Brody; cf. \cite{Kob}.     

A main motivation for us comes from describing the set of rational points of particular Diophantine varieties under the Bombieri-Lang conjecture. For instance, by finding all curves of geometric genus less than or equal to one, Vojta \cite{V00} shows that the ``$n$ squares problem" of B\"uchi follows from that conjecture, and later the first author shows that the analogous problem for arbitrary $k$-powers would also be a consequence of it \cite{GF16}.


Let us consider one example, which will be used in Section \ref{s1} to develop the ideas and computations around the method. In this example the singularities are rational double points of type A.

Let $n\geq 3$, $m\geq 2$ be integers. Let $\{F_1,\ldots,F_{nm}\}$ and $\{G_1,\ldots, G_{nm}\}$ be collections of distinct $nm$ vertical and horizontal fibres of $\P^1 \times \P^1$ respectively. Let us denote the elements of Pic$(\P^1 \times \P^1) \simeq \Z^2$ by $(a,b)$. Then we have $$ F_{km+1}+\ldots+F_{(k+1)m} + G_{km+1}+\ldots+ G_{(k+1)m} = (m,m) $$ in Pic$(\P^1 \times \P^1)$ for $0\leq k\leq n-1$. These equations define a tower of $n$ cyclic covers of degree $m$ $$X_n \to X_{n-1} \to \ldots \to X_1 \to \P^1 \times \P^1.$$ All $X_k$ are normal projective surfaces with $k m^{k+1}$ singularities of type $$A_{m-1} \colon \ (0,0) \in (z^m-xy)\subset \C^3.$$ The surface $X_n$ is simply connected. It also has ample canonical class, and so it is of general type.

Let $n=3$. We can take as a model of $\P^1 \times \P^1$ the quadric $$(z_0z_3-z_1z_2) \subset \P^3,$$ and so the surface $X_3$ can be presented as $$ \prod_{i=1}^m(z_0-a_iz_1-b_iz_2+a_i b_iz_3) =z_4^m, \ \ \ \prod_{i=1}^m(z_0-c_iz_1-d_iz_2+c_i d_iz_3) =z_5^m, $$ 
$$ \prod_{i=1}^m(z_0-e_iz_1-f_iz_2+e_i f_iz_3) =z_6^m, \ \ \ \ \ z_0z_3-z_1z_2=0 $$ in $\P^6$ for distinct $a_i,c_i,e_i \in \C$ and distinct $b_i,d_i,f_i \in \C$. For $m=2$ and a specific choice of $a_i,c_i,e_i,b_i,d_i,f_i$, the surface $X_3$ is isomorphic to the surface of cuboids $S$ defined by  $$x_0^2+x_1^2+x_2^2=x_3^2, \ \ \ \ x_0^2+x_1^2=x_4^2, \ \ \ \ x_0^2+x_2^2=x_5^2, \ \ \ \ x_1^2+x_2^2=x_6^2$$ in $\P^6$ (see Section \ref{s2}). A positive rational point in $S$ would realize a perfect cuboid. It is unknown if there are any such points. This famous old problem goes back to Euler; cf.\ \cite{ST10}, \cite{vLu00}, \cite{B13}, \cite{FS16}. According to the \textit{Bombieri-Lang conjecture}, outside of a certain finite set of curves of geometric genus at most one, 
there can only be a finite number of solutions. Hence it is of interest to find all such curves. We prove the following (see Sections \ref{s1} and \ref{s5}).

\begin{theorem}
The surfaces $X_n$ are hyperbolic for any $m>2$.
\end{theorem}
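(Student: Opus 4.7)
The plan is to combine the Vojta--Bogomolov construction of symmetric differentials (as generalized to the singular setting in \cite{GF15}) with Nevanlinna theory, along the lines sketched in the introduction. First I would resolve the $A_{m-1}$ singularities of $X_n$ to obtain a smooth model $\tilde X_n$; since these are rational double points, the canonical class and cotangent sheaf are easy to track on $\tilde X_n$, with standard chains of $(-2)$-curves appearing over each singularity, and the symmetric differentials that we construct can be transferred between $X_n$ and $\tilde X_n$ without loss.

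Each cover $X_{k+1}\to X_k$ has defining equation of the form $w_{k+1}^m = f_{k+1}^V(u)\cdot f_{k+1}^H(v)$, where $u,v$ are affine coordinates on $\P^1\times \P^1$ and $f_{k+1}^V, f_{k+1}^H$ are degree $m$ polynomials cutting out the vertical and horizontal branch fibres of that step. The resulting identity $m\, d\log w_{k+1} = d\log f_{k+1}^V + d\log f_{k+1}^H$ produces, after suitable pullbacks through the tower, global sections of $S^k\Omega^1_{\tilde X_n}$ whose zero divisors can be computed explicitly in terms of the branch fibres and the exceptional $(-2)$-chains. The Vojta--Bogomolov counting argument then bounds the geometric genus of any irreducible curve $C\subset \tilde X_n$: one pulls these forms back to the normalization of $C$ and invokes Riemann--Hurwitz together with the intersection numbers of $C$ with the zero divisors. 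For $m>2$ the numerics are strong enough to force any $C$ with $g(C)\leq 1$ into an explicit finite union $\Sigma$ of curves arising from pullbacks of fibres of $\P^1\times\P^1$ and from the exceptional chains over the singularities.

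On the Nevanlinna side, the same differentials are reinterpreted as jet differentials on $\tilde X_n$ with prescribed vanishing along the branch and exceptional loci. The logarithmic first and second main theorems (cf.\ \cite{V11}) then imply that any entire curve $f\colon \C\to \tilde X_n$ must pull back each such form to zero, which is exactly the analytic analogue of the Riemann--Hurwitz step above, and so $f$ is forced to factor through $\Sigma$. To conclude hyperbolicity, it suffices to verify that every component of $\Sigma$ has geometric genus at least $2$ when $m>2$, so that no component can carry an entire curve. The main obstacle is the sharpness of the numerical bookkeeping that makes the bound strict precisely when $m>2$: at $m=2$ (the surface of cuboids) residual rational and elliptic curves do persist, and the proof must genuinely leverage the extra ramification afforded by $m>2$ both to shrink $\Sigma$ and to raise the genera of its components past $1$. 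A secondary difficulty is to check that the resolution process does not introduce unforeseen rational curves on $\tilde X_n$ accessible to entire maps, which amounts to a careful local analysis of how the $(-2)$-chains meet the strict transforms of the branch divisors.
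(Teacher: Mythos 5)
Your high-level outline matches the paper's strategy: resolve the $A_{m-1}$ singularities, produce a twisted symmetric differential on $X_n'$ vanishing along the branch and exceptional loci, use the Bogomolov--Vojta degree computation to force any low-genus curve onto the ``integral'' locus, and invoke Vojta's Nevanlinna-theoretic result to push entire curves onto that same locus. However, the central construction you propose is different from the paper's and is not substantiated, which leaves a genuine gap.

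The paper does not build its symmetric differential from the relation $m\, d\log w_{k+1} = d\log f_{k+1}^V + d\log f_{k+1}^H$. Instead it fixes a single section $\omega \in H^0(\P^1\times\P^1, (2,2)\otimes S^2\Omega^1)$, namely $y^2z^2\,dx\,dw$ in affine coordinates, and pulls it back through the tower. The crucial inputs are then: (i) the classification of $\omega$-integral curves (only the horizontal and vertical fibres, by a result from [GF15]); and (ii) Lemma 2.3, a local toric computation at each $A_{m-1}$ point showing that when $m>2$ the pulled-back section vanishes along \emph{all} the exceptional $(-2)$-curves, yielding $\omega'' \in H^0(X_n', \O(-(m-1)R-E)\otimes g_n'^*(2,2)\otimes S^2\Omega^1_{X_n'})$. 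Using $g_n'^*(n,n)=R+E$ one then sees the twisting sheaf is $g_n'^*(2-n,2-n)\otimes\O(-(m-2)R)$, and the projection formula makes the relevant degree negative for any curve of genus $\le 1$ that is not exceptional. Your $d\log$ relations give meromorphic $1$-forms with poles along the branch divisors, and it is not explained how to promote these to holomorphic sections of a symmetric power of $\Omega^1$ with the specific extra vanishing along $E$ that is what makes $m>2$ work and $m=2$ fail; this is exactly the content of Lemma 2.3, which your sketch replaces by an unproved assertion.

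Two further points. First, the conclusion is stronger than you describe: for $m>2, n\ge 3$ the preimages of the fibres all have genus $>1$ and the exceptional curves are contracted by $\sigma_n$, so $X_n$ has \emph{no} curves of genus $\le 1$ at all (Corollary 2.6); your ``finite union $\Sigma$'' is empty in $X_n$. Second, on the analytic side the paper does not appeal to general logarithmic first/second main theorems, but to a specific corollary of Vojta's [V00b, Cor.~5.2] (Theorem 5.1 and Corollary 5.2 in the paper), which bounds $T_{\L,f}$ assuming $f^*\omega\neq 0$; combined with bigness of the dual twisting bundle this forces $f^*\omega''=0$ for any Zariski-dense entire curve, and then the hypothesis that all solutions of $\omega=0$ are $\omega$-integral curves concludes. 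You would need to make that substitution explicit for the argument to close.
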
     
 
Recall that the surface of cuboids $S$ is the surface $X_n$ with $m=2$, $n=3$ and a particular choice of vertical and horizontal fibres. For $S$, the results are less strong and involve further adaptations of the method. Details are worked out in Section \ref{s2}.

\begin{theorem}
Let $S$ be the surface of cuboids. Let $S' \to S$ be its minimal resolution, and let $E$ be the sum of the $48$  exceptional curves. We have: 
\begin{itemize}
\item[(a)] Every curve of geometric genus $0$ or $1$ must contain at least $2$ of the $48$ singularities of $S$.

\item[(b)] If $C \subset S$ is a curve which is smooth at the singular points of $S$, then deg$(C) \leq 4 g(C)+44$.

\item[(c)] If $C \subset S'$ is a rational curve which is neither exceptional nor contained in $x_0 x_1 x_2 x_3=0$, then $C \cdot E \geq 8$. 

\end{itemize}
\end{theorem}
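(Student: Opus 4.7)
The plan is to work on the minimal resolution $\pi\colon S' \to S$ and adapt the Vojta-type method from Section \ref{s1} to the singular case with $m=2$. Since $S = X_3$ arises as a tower of three double covers of $\P^1 \times \P^1$, the cotangent sheaf on $S'$ admits a useful decomposition involving pullbacks of logarithmic differentials along the branch fibres together with a contribution supported on the exceptional divisor $E$. The first step is to construct an explicit family of symmetric differentials on $S'$ by pulling back logarithmic $1$-forms $d\log(\ell_i)$ along each of the three cyclic covers, where $\ell_i$ cut out the branch fibres, and then taking products in $\mathrm{Sym}^k \Omega^1_{S'}$. The order of vanishing of each such differential along $E$ and along the strict transforms of the branch curves is forced by the combinatorics of the cover.

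Given a curve $C \subset S$ that is not contained in the vanishing locus of such a differential, passing to the normalization $\tilde C$ and restricting yields a nonzero section of $(K_{\tilde C})^{\otimes k}$ twisted by a divisor that encodes contact of $C$ with the branch curves and with the singularities of $S$. For part (a), the hypothesis $g(\tilde C) \leq 1$ makes $\deg K_{\tilde C}^{\otimes k}$ small; if $C$ met at most one singular point, the negative contribution from that twist would be insufficient, contradicting the existence of a global nonzero section. Hence $C$ must contain at least two singularities.

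For part (b), I would rerun the same inequality while carefully accounting for the contribution of each of the $48$ exceptional curves and each branch component; because $C$ is smooth at the singular points, its pullback to $S'$ still meets $E$, and the contact orders combine with the degree of $C$ to give the linear bound $\deg C \leq 4g(C)+44$. The factor $4$ should come from the three degree-$2$ covers (roughly, each cover doubles certain contact contributions and one extra factor comes from the ambient quadric in $\P^3$), while the constant $44$ arises as the combined numerical defect from the $48$ $A_1$-singularities minus a fixed loss from the choice of differentials. Part (c) is then the specialization of the same argument to $g=0$ curves on $S'$: if $C$ is rational, not exceptional, and not contained in $x_0x_1x_2x_3=0$, the constructed symmetric differentials restrict nontrivially and the inequality degenerates to $C \cdot E \geq 8$.

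The main obstacle will be pinning down the precise numerical constants $4$ and $44$ in part (b), and the value $8$ in part (c): these require exact bookkeeping of the orders of vanishing of the chosen symmetric differentials along each of the $48$ exceptional curves and the branch components, which in turn depends on the specific choice of vertical and horizontal fibres realizing $S$ as a cuboid surface (the fibres are not generic, as cuboids force certain coincidences). A secondary technical point is verifying nontrivial restriction: one must exhibit a symmetric differential whose zero locus does not contain $C$, which is where the exclusion of the locus $x_0x_1x_2x_3 = 0$ in part (c) enters.
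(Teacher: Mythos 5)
The core gap in your proposal is part (c). Your approach—build symmetric differentials, restrict to $\tilde C$, get a negativity contradiction—cannot by itself produce an \emph{absolute} lower bound of the form $C\cdot E\geq 8$. With a single section $\omega\in H^0(\P^1\times\P^1,(2,2)\otimes S^2\Omega^1)$ (the paper's $\omega=y^2z^2\,dx\,dw$, whose integral curves are exactly the fibres), the non-integrality of $C$ with $g=0$ only yields $(C'\cdot E)\geq \deg(C)+4$, which compares $C'\cdot E$ against $\deg(C)$ rather than giving a constant. The paper's actual mechanism is a genuinely new device that your proposal has no analogue of: introduce \emph{four} sections $\omega_i=(\text{image of }x_i=0)\cdot y^2z^2\,dx\,dw\in H^0((3,3)\otimes S^2\Omega^1)$, one for each conic $C_i$ (the image of $x_i=0$). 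Each $C_i$ passes through exactly $12$ of the $12$ nodes in $\P^1\times\P^1$ corresponding to $24$ of the $48$ singularities of $S$. The crucial Lemma~\ref{newomega2} shows by a local computation at the $A_1$-points that $g_3'^\bullet\omega_i$ vanishes along $E_i$ (exceptional divisors over the $24$ singularities mapping into $C_i$), in addition to along $R$. Since $\O_{X_3'}(-R-E_i)\otimes g_3'^*(3,3)=\O_{X_3'}(E_i')$ by the relation ${g_3'}^*(3,3)=R+E$, non-integrality forces $(C\cdot E_i')\geq 4$ for each $i$; then $\sum_i E_i'=2E$ gives $C\cdot E\geq 8$. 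Nothing in your sketch constructs these extra vanishing factors or explains why one needs four of them and how they overlap.

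For parts (a) and (b), your route via products of $d\log(\ell_i)$ in $\mathrm{Sym}^k\Omega^1_{S'}$ is not what the paper does (it uses the single degree-$2$ symmetric form $\omega$ with the branch fibres as its integral curves, pulled back through $g_3'$), and your derivation of the constants is incorrect. The real source of the bound $\deg(C)\leq 4g(C)+44$ is elementary: the key inequality states that if $-\deg(C)+(E\cdot C')+4g(C)-4<0$ then $g_3'(C)$ is $\omega$-integral; when $C$ is smooth at the singular points, the strict transform meets each of the (at most) $48$ exceptional $(-2)$-curves transversally in at most one point, so $(E\cdot C')\leq 48$, whence $\deg(C)\leq(E\cdot C')+4g(C)-4\leq 4g(C)+44$. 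There is no ``numerical defect minus a fixed loss'' accounting, no contribution ``from the ambient quadric'', and the factor $4$ is simply $2r$ with $r=2$ from $S^2\Omega^1$ in the degree on $\tilde C$. You would need to set this up explicitly to make (a) and (b) rigorous; as written, the constants are guessed rather than derived.
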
 

In Section \ref{s3} we develop the method for arbitrary cyclic quotient singularities, which is captured in the following particular example. Let us consider the lines $ L_{t,u} = (t^2x + tuy + u^2 z) \subset \P^2$ for $[t,u] \in \P^1$. They are precisely the tangent lines to the conic $ (y^2-4xz) \subset \P^2$. Let $\{L_1,\ldots,L_{d} \}$ be distinct lines such that $L_i=L_{t_i,u_i}$ for some $[t_i,u_i] \in \P^1$. Let us take positive integers $a_1,\ldots,a_d$ such that $\sum_{i=1}^d a_i =mR$ for some integers $m,R>0$. Assume that $a_i<m$ and gcd$(a_i,m)=1$ for all $i$, and that $a_i+a_j$ is not divisible by $m$ for all $i \neq j$. The method we develop in Section \ref{s3} allows us to prove the following.

\begin{theorem}
If $4m < d$, then the surface $$(t_1^2x+t_1 u_1 y+u_1^2z)^{a_1}\cdots(t_d^2x+t_d u_d y+u_d^2z)^{a_d}=w^m$$ in $\P(1,1,1,R)$  contains no curves with geometric genus $\leq 1$ apart from the $\P^1$'s defined by $t_i^2x+t_i u_i y+u_i^2z=0$. Its normalization is a simply connected normal projective surface with ample canonical class.
\label{hshs}  
\end{theorem}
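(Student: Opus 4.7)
The plan is to specialize the general framework developed in Section \ref{s3} to the configuration in which all branch lines $L_i$ are tangent to the fixed conic $(y^2 - 4xz) \subset \P^2$. The key geometric input is that the quadratic $t^2 x + tu y + u^2 z$ in $[t:u]$ has discriminant exactly $y^2 - 4xz$, so the family of tangent lines is parametrized by the dual $\P^1$; this one-parameter structure plays the role that the two pencils of fibres play in Vojta's original argument on $\P^1 \times \P^1$, and it is what allows us to extract enough positive symmetric differentials from a single branch configuration on $\P^2$.

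To obtain the structural statements, I first verify that the stated hypotheses produce exactly the type of cyclic quotient singularities handled in Section \ref{s3}: $\gcd(a_i,m)=1$ makes the cover irreducible and totally ramified above each $L_i$, while $a_i+a_j\not\equiv 0\pmod{m}$ (together with $a_i,a_j<m$) ensures that above each intersection $L_i\cap L_j$ the surface acquires only a cyclic quotient singularity (and not a more degenerate one). Ampleness of $K_X$ on the normalization follows by weighted adjunction in $\P(1,1,1,R)$: the surface is a weighted hypersurface of degree $mR$, so $K_X=\O((m-1)R-3)|_X$, and since $4m<d\leq \sum a_i = mR$ gives $R>4$, we get $(m-1)R-3>0$ for every $m\geq 2$. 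Simple-connectedness of the normalization follows from a Lefschetz-type statement for cyclic covers of $\P^2$ branched over a connected union of sufficiently many lines.

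For the main claim, I would apply the method of Section \ref{s3} to produce a nontrivial section of a suitable sheaf of logarithmic symmetric differentials on a resolution $\tilde X \to X$, whose base locus is contained in the preimages of $\bigcup_i L_i$. The ingredients are the logarithmic $1$-forms on $\P^2\setminus \bigcup_i L_i$, the cover relation $m\,dw/w=\sum_i a_i\,dL_i/L_i$, and a ``pencil'' contribution obtained from the dual $\P^1$ parametrizing the tangent lines (via the discriminant identity above). Given a curve $C\subset X$ of geometric genus at most one that is not equal to a preimage of any $L_i$, I pull these symmetric differentials back to the normalization $\tilde C$; they must define a nonzero section of a line bundle of negative degree on $\tilde C$, which is a contradiction. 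The numerical threshold for the positivity of the symmetric differential sheaf used here is exactly $4m<d$.

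The main obstacle I expect is to verify that the base locus of the constructed sheaf of symmetric differentials consists only of the preimages of the $L_i$, and no other curves on $\tilde X$. This is the step in which the tangent-to-conic hypothesis is indispensable: it is the discriminant relation $\mathrm{disc}_{[t:u]}(t^2x+tuy+u^2z)=y^2-4xz$ that rules out other candidate curves lying in the base locus. A secondary obstacle is the bookkeeping near the cyclic quotient singularities when passing to $\tilde X$: one must ensure that exceptional divisors do not contribute spurious zeros to the pulled-back differential on $\tilde C$, nor spurious base components globally, which is precisely the generality that Section \ref{s3} is designed to control.
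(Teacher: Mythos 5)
The proposal captures the broad architecture (cyclic cover of $\P^2$ branched along lines tangent to $(y^2-4xz)$, positivity governed by $4m<d$, Section~\ref{s3} machinery), but there are three substantive gaps relative to what the paper actually does.

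First, the proof hinges on one \emph{explicit} symmetric differential, namely $\omega = dx^{2} - y\,dx\,dy + x\,dy^{2} \in H^0(\P^2,\O_{\P^2}(4)\otimes S^2\Omega^1_{\P^2})$, obtained by differentiating the family $t^2x+tuy+u^2z$. The whole argument is an application of Theorem~\ref{cyclic} with $\L=\O(4)$, $r=2$: the numerical threshold $4m<d$ is literally the ampleness of $\sum D_j - m\L=\O(d-4m)$. Your description via ``logarithmic $1$-forms on $\P^2\setminus\bigcup L_i$'' plus a vaguely specified ``pencil contribution'' does not identify this section and would not yield the right twist on the resolution; without the explicit $\omega$ and the local vanishing computation near the cyclic quotient singularities (the analogue of Lemma~\ref{newomega}, proved in Theorem~\ref{cyclic}), there is no argument.

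Second, your claim that the resulting sheaf of symmetric differentials has ``base locus contained in the preimages of $\bigcup_i L_i$'' is false and skips a necessary step. The $\omega$-integral curves on $\P^2$ are \emph{all} tangent lines to the conic together with the discriminant conic $(y^2-4xz)$ itself, not just the $d$ chosen branch lines. Theorem~\ref{cyclic} only tells you that low-genus curves must lie over $\omega$-integral curves; you still have to run Riemann--Hurwitz to verify that preimages of the conic and of non-branch tangent lines have genus $\geq 2$, leaving only the ramification $\P^1$'s over the $L_i$. This check is absent from your outline.

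Third, the ampleness argument is incorrect. Weighted adjunction gives $K_{Y_0}=\O((m-1)R-3)|_{Y_0}$ for the hypersurface $Y_0\subset\P(1,1,1,R)$, but $Y_0$ is generically \emph{non-normal} along any branch line with $a_i\geq 2$ (the local equation $v^{a_i}=w^m$ with $a_i\geq 2$, $\gcd(a_i,m)=1$ is a cuspidal, non-normal surface germ), and the hypotheses force $m\geq 3$ so such $a_i$ are allowed. The theorem's claim concerns $K_Y$ for the normalization $Y$, and $K_Y=\eta^*K_{Y_0}-(\text{conductor})$, which is strictly smaller; ampleness of $K_{Y_0}$ does not transfer. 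The paper instead computes $K_Y$ directly from the canonical class formula for cyclic covers \cite[Proposition~1.4]{U10}, and simple connectedness from \cite[Theorem~8.5]{U10}; your Lefschetz-style appeal for simple connectedness is also not the route taken and would need justification for a normalization of a non-normal weighted hypersurface.
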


The Diophantine hypersurface $(\prod_{i=1}^{15} (i^2x+iy+z)= w^3) \subset \P(1,1,1,5)$ works as an example for Theorem \ref{hshs}. 

Let $X$ be a smooth projective surface, and let $\omega \in H^0(X, \L \otimes S^r\Omega_X^1)$ for some line bundle $\L$, and some integer $r>0$. Key in the above results is the notion of $\omega$-integral curves; see Definition \ref{omega}. We construct surfaces via a composition of cyclic covers of the fixed surface $X$, which are branched along $\omega$-integral curves, and for which we know all $\omega$-integral curves on $X$. In Section \ref{s4} we prove the following general theorem. 

\begin{theorem}\label{main}
Assume we have the relations $m_i \M_i = \sum_{j=1}^{s_i} a_{i,j} D_{i,j}$ in Pic$(X)$, for some line bundles $\M_i$, with $0<a_{i,j}<m_i$ and gcd$(a_{i,j},m_i)=1$ for all $i,j$. Assume  that the divisor $\sum_{i=1}^n \sum_{j=1}^{s_i} D_{i,j}$ has simple normal crossings with $D_{i,j}$ $\omega$-integral curves.  Then a tower of $n$ cyclic covers of degree $m$ $$X_n \to X_{n-1} \to \ldots \to X_1 \to X_0:= X$$ is defined, where all $X_k$ are normal projective surfaces with only cyclic quotient singularities. If $$\sum_{i=1}^n \frac{1} {m_i} \Big( \sum_{j=1}^{s_i} D_{i,j}\Big) -\L \ \ \text{is} \ \Q\text{-ample, and } \ a_{i,j} \not \equiv -a_{i,j'} (mod \ m_i) \ \text{ for all} \ i, \ j\neq j',$$ then $X_n$ can have curves of geometric genus $\leq 1$ only in the set of preimages of $\omega$-integral curves in $X$. 
\end{theorem}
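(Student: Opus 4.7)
My plan is to iteratively pull $\omega$ through the tower, exploiting the $\omega$-integrality of each branch to absorb the ramification, and then to turn the $\Q$-ampleness hypothesis on a log resolution into a negativity statement that forces every low-genus curve on $X_n$ to lie over an $\omega$-integral curve of $X$.

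\textbf{Tower and its singularities.} I would first check that each relation $m_i\M_i=\sum_j a_{i,j}D_{i,j}$, pulled back to $X_{i-1}$, defines a normal cyclic cover $\pi_i\colon X_i\to X_{i-1}$ of degree $m_i$. Since $\gcd(a_{i,j},m_i)=1$, at a generic point of $D_{i,j}$ the cover is analytically $\{t^{m_i}=u\}$ (one reduces to the $a_{i,j}=1$ case by a change of variable), so $X_i$ is smooth there and $\pi_i^*D_{i,j}=m_iR_{i,j}$ with $R_{i,j}$ reduced. Above a node $D_{i,j}\cap D_{i,j'}$ one gets $\{t^{m_i}=u^{a_{i,j}}v^{a_{i,j'}}\}$, a cyclic quotient singularity; the hypothesis $a_{i,j}\not\equiv -a_{i,j'}\pmod{m_i}$ is invoked here to ensure that the Hirzebruch--Jung resolution of each such node does not destroy transversality, so minimally resolving before forming the next cover preserves the SNC structure throughout the tower. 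Let $\pi\colon X_n\to X$ denote the composition and fix a log resolution $\sigma\colon Y\to X_n$ with reduced exceptional divisor $E$.

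\textbf{Transferring $\omega$.} The main computation is local. At a generic point of $R_{i,j}$, where the $i$-th cover reads $t^{m_i}=u$, I write $\omega=\sum_{k=0}^{r}\binom{r}{k}f_k(u,v)(du)^k(dv)^{r-k}$. The $\omega$-integrality of $D_{i,j}$ forces $f_0(0,v)\equiv 0$, so $f_0\in(u)=(t^{m_i})$; substituting $du=m_it^{m_i-1}dt$, every monomial of $\pi_i^*\omega$ vanishes on $R_{i,j}$ to order at least $m_i-1$ (the $k=1$ term being the bottleneck), yielding
\[
\pi_i^*\omega\in H^0\Bigl(X_i,\,S^r\Omega^1_{X_i}\otimes\pi_i^*\L\otimes\O\bigl(-(m_i-1)\textstyle\sum_j R_{i,j}\bigr)\Bigr).
\]
Iterating through all $n$ covers and pulling back to $Y$ via $\sigma$, I would obtain
\[
\widetilde\omega\in H^0\bigl(Y,\, S^r\Omega^1_Y(\log E)\otimes \L_Y\bigr)
\]
with $\L_Y$ numerically equivalent, modulo $\sigma$-exceptional divisors, to
\[
\sigma^*\pi^*\Bigl(\L-\sum_{i=1}^n\tfrac{m_i-1}{m_i}\sum_{j=1}^{s_i} D_{i,j}\Bigr).
\]
Because $\tfrac{m_i-1}{m_i}\ge\tfrac{1}{m_i}$ and each $D_{i,j}$ is effective, the hypothesis that $\sum_i\tfrac{1}{m_i}\sum_j D_{i,j}-\L$ is $\Q$-ample upgrades to $-\L_Y$ being $\Q$-ample (up to exceptional correction absorbable into the log boundary $E$).

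\textbf{Low-genus bound and main obstacle.} For an irreducible $C\subset X_n$ with $g(\widetilde C)\le 1$ whose image $\pi(C)$ is not $\omega$-integral, I take $\nu\colon\widetilde C\to Y$ to be the normalization composed with the lift to $Y$. Non-integrality yields $\nu^*\widetilde\omega\ne 0$, i.e.\ a nonzero section of $\nu^*\L_Y\otimes(\Omega^1_{\widetilde C}(\log\nu^{-1}E))^{\otimes r}$, hence
\[
r\bigl(2g(\widetilde C)-2+\#\nu^{-1}(E)\bigr)+\deg\nu^*\L_Y\ge 0.
\]
The $\Q$-ampleness of $-\L_Y$ renders the left-hand side strictly negative when $g(\widetilde C)\le 1$, provided the logarithmic boundary is controlled. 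The hardest part will be this bookkeeping: I must verify that the exceptional curves arising from each Hirzebruch--Jung resolution contribute to the log boundary within the ample defect of $-\L_Y$, and that the numerical correction at the chains of $(-n)$-curves remains harmless. The role of $a_{i,j}\not\equiv -a_{i,j'}\pmod{m_i}$ is precisely to pin down the singularity type so that this cancellation can be effected uniformly over the tower. Curves $C$ lying in $\sigma$-exceptional fibres or in $\pi^{-1}(\bigcup_{i,j}D_{i,j})$ map to a point of $X$ or to an $\omega$-integral curve $D_{i,j}$, so they are permitted by the conclusion.
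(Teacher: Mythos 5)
Your local computation along a generic point of $D_{i,j}$ is right, and the tower construction is essentially the one in the paper. But there are two linked gaps that prevent the argument from closing.

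First, you misidentify the role of the hypothesis $a_{i,j}\not\equiv -a_{i,j'}\pmod{m_i}$. It has nothing to do with preserving transversality or the SNC structure through the tower (SNC of $\sum_{i,j}D_{i,j}$ already guarantees that the later branch divisors avoid the cyclic quotient points produced by the earlier ones, so no intermediate resolution is needed). The condition is there to \emph{exclude singularities of type $\frac{1}{m_i}(1,1)$} over the nodes $D_{i,j}\cap D_{i,j'}$. This matters because the key step is not just vanishing of the pulled-back differential to order $m_i-1$ along the strict transform $R$, but vanishing along the \emph{entire exceptional divisor} $E$ of the minimal resolution $X'_n\to X_n$. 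Writing the local model as $g'(u_l,u_{l+1})=(u_l^{\beta_l}u_{l+1}^{\beta_{l+1}},\,u_l^{\alpha_l}u_{l+1}^{\alpha_{l+1}})$, the pull-back of $du^{\otimes(r-k)}\otimes dv^{\otimes k}$ (for $0<k<r$) vanishes on $E_l$ iff $\alpha_l>1$ or $\beta_l>1$; the only way this can fail is $l=1$, $q=1$, i.e.\ a $\frac{1}{m}(1,1)$ singularity — exactly what $a_{i,j}\not\equiv -a_{i,j'}$ rules out. Your reading of the hypothesis would leave this vanishing unproved.

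Second, because you do not establish vanishing along $E$, you retreat to log differentials $S^r\Omega^1_Y(\log E)$, and the resulting degree inequality acquires the term $r\cdot\#\nu^{-1}(E)$. You flag this as ``the hardest part'' but it is not a bookkeeping nuisance: $\#\nu^{-1}(E)$ is not bounded in terms of the data you control, so the inequality does not yield a contradiction for genus $\le 1$ curves that meet $E$ many times. The paper avoids this entirely by producing a genuine section
$\omega''\in H^0\bigl(X'_n,\,\O_{X'_n}(-R-E)\otimes g_n'^*\L\otimes S^r\Omega^1_{X'_n}\bigr)$
(no log poles), so that the twisting line bundle is numerically $g_n'^*\bigl(\L-\sum_i\frac{1}{m_i}\sum_j D_{i,j}\bigr)$ plus a strictly negative combination $-\sum_i(m_i-2)R_i+\sum_k d_k E_k$ (with $-1<d_k<0$), and the projection formula alone gives the required negativity. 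Without the vanishing-along-$E$ lemma your approach does not recover this, so the proof as proposed does not go through.
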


From this theorem, and via a particular result of Vojta \cite{V00b} in Nevanlinna theory for algebraic varieties, in Section \ref{s5} we obtain the following. 

\begin{theorem}
Let us consider the hypothesis and notation as in Theorem \ref{main}. In addition, assume that all solutions to the differential equation given by $\omega=0$ on $X$ are $\omega$-integral curves. Then an entire curve in $X_n$ must be contained in the set of preimages of $\omega$-integral curves in $X$. In particular, if the set of preimages of $\omega$-integral curves in $X$ does not contain curves of geometric genus $0$ or $1$, then $X_n$ is hyperbolic.
\end{theorem}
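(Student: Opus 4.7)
The plan is to mirror the argument of Theorem \ref{main}, but replace the Diophantine input (height inequalities controlling curves of small geometric genus) by its Nevanlinna-theoretic analog for holomorphic maps $\C \to X_n$ proved by Vojta in \cite{V00b}. The strategy for Theorem \ref{main} essentially produces, on a resolution of $X_n$, a twisted symmetric differential built from $\omega$ and the branch divisors of the tower, and then exploits the $\Q$-ampleness hypothesis to force low-genus curves into the vanishing locus of that differential, which by construction is the preimage of the $\omega$-integral curves in $X$. The Nevanlinna version swaps heights for Nevanlinna characteristic functions and uses the tautological inequality in place of the Vojta-type height inequality.

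First I would recall from the proof of Theorem \ref{main} the explicit symmetric $r$-differential on (a resolution of) $X_n$ obtained by pulling back $\omega$ through the composition $\pi: X_n \to X$ and dividing by suitable fractional orders of vanishing along the $D_{i,j}$, which become integral thanks to the cyclic covers of degrees $m_i$. The key outputs are that this differential has values in a line bundle equivalent, up to contributions absorbed by the ramification/exceptional divisors, to $\sum_i \frac{1}{m_i}\bigl(\sum_j D_{i,j}\bigr) - \L$, and that its scheme of zeros is contained in the preimage of the $\omega$-integral curves in $X$.

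Next, let $f: \C \to X_n$ be an entire curve, and set $\bar{f} = \pi \circ f : \C \to X$. Since $\pi$ is finite, $\bar{f}$ is nonconstant. If $\bar{f}^*\omega \equiv 0$, then $\bar{f}$ is a solution of the differential equation $\omega=0$ on $X$, and by the additional hypothesis its image lies in an $\omega$-integral curve, so $f(\C)$ lies in the preimage of $\omega$-integral curves, as desired. Otherwise $\bar{f}^*\omega$ is a nonzero holomorphic jet differential on $\C$, and applying Vojta's Nevanlinna-theoretic inequality from \cite{V00b} to $f$ with the symmetric differential constructed above gives a bound of the form $T_f(r,L) \leq \varepsilon\, T_f(r) + O(\log r)$ outside an exceptional set, where $L$ is a $\Q$-ample multiple of $\sum_i \frac{1}{m_i}(\sum_j D_{i,j}) - \L$; this contradicts $\Q$-ampleness. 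Hence the first assertion. The second assertion then follows, because if no preimage of an $\omega$-integral curve has geometric genus $0$ or $1$, no entire curve can parametrize it.

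The main obstacle I anticipate is the singular nature of $X_n$: Vojta's inequality in \cite{V00b} is stated for holomorphic curves in smooth projective varieties. One must lift $f$ to a resolution $\widetilde{X}_n \to X_n$ (possible because $f$ is entire and the resolution is an isomorphism over the smooth locus, with the exceptional locus being codimension two and avoidable by the Big Picard-type extension, or else $f$ factors through an exceptional curve in which case the conclusion is immediate), verify that the symmetric differential extends to $\widetilde{X}_n$ with the correct twisting by the exceptional divisors, and check that the $\Q$-ampleness of $\sum_i \frac{1}{m_i}(\sum_j D_{i,j}) - \L$ survives on $\widetilde{X}_n$ after the necessary negative contributions from the exceptional divisors at the cyclic quotient singularities. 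This is the same bookkeeping already carried out in the proof of Theorem \ref{main}, so once that is in place the Nevanlinna step slots in formally.
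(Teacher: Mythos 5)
Your overall strategy coincides with the paper's: build the twisted symmetric differential $\omega''$ on the resolution $X'_n$ as in the proof of Theorem \ref{main}, and feed it to Vojta's Nevanlinna-theoretic inequality from \cite{V00b}. However, there is a gap in the step ``this contradicts $\Q$-ampleness.'' The line bundle $\N^{\vee} = \O_{X'_n}(R+E) \otimes {g'}_n^*\L^{\vee}$ that enters Vojta's inequality is only \emph{big} on $X'_n$, not ample: it is numerically the pullback of an ample class from $X$ plus an effective $\Q$-divisor supported on $R$ and the exceptional curves. Bigness of $\N^{\vee}$ produces a contradiction with Vojta's inequality only when the lift $f'\colon \C \to X'_n$ has Zariski dense image --- this is exactly the content of the paper's Corollary \ref{hyperbolickey}, whose hypothesis you drop. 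When $f'(\C)$ lies in a proper algebraic curve $C$, the quantity $\N^{\vee}\cdot C$ can be zero or negative (for instance if $C$ is a component of $R$ or of $E$), and Vojta's inequality then gives no information. Your split into $\bar f{}^*\omega \equiv 0$ versus $\bar f{}^*\omega \not\equiv 0$ does not automatically close this: you would still need to check that whenever $\bar f{}^*\omega\neq 0$ the curve containing $f'(\C)$ meets neither $R$ nor $E$ (so that $\N^{\vee}\cdot C>0$), and you do not perform that verification.

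The paper avoids this bookkeeping entirely. In the Zariski dense case it invokes Corollary \ref{hyperbolickey} to force $f'^*\omega'' = 0$, contradicting density via the extra hypothesis on the solutions of $\omega = 0$. In the non-dense case it simply applies Liouville's theorem: the irreducible curve containing $f(\C)$ must have geometric genus $\leq 1$, and then Theorem \ref{main} already places all such curves in the preimage of $\omega$-integral curves. This two-step dichotomy is cleaner and sidesteps any degree computation of $\N^{\vee}$ on individual curves. A further small correction: the exceptional locus of $\sigma_n\colon X'_n\to X_n$ is a \emph{divisor} in $X'_n$ (codimension one), not codimension two as you assert; the lift $f'$ of $f$ exists because $\sigma_n$ is projective and birational and $\C$ is a smooth curve, not because the exceptional locus is avoidable by a codimension argument.
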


The following application of the method and its adaptations produces explicit families of smooth complete intersection surfaces which are hyperbolic and have arbitrary multidegrees, in particular, low multidegrees. We note that it is key for this application to work with singularities.

\begin{theorem}
There are explicit families of smooth complete intersections in $\P^{n+2}$ which are hyperbolic for multidegrees

\begin{itemize}
\item[(a)] $(m_1,\ldots,m_n)$ when $n\geq 8$ and $m_i\geq 2$.

\item[(b)] $(m_1,\ldots,m_n)$ when $n\geq 5$ and $m_i\geq 3$.

\item[(c)] $(2,m_1,\ldots,m_{n-1})$ when $n \geq 6$, $m_i \geq 2$ for all $i<n-1$, $m_{n-1} \geq 3$.

\item[(d)] $(2,m_1,\ldots,m_{n-1})$ when $n\geq 4$ and $m_i \geq 3$.
\end{itemize} 
\label{jdjhds}
\end{theorem}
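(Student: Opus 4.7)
The plan is to build, for each case (a)--(d), an explicit singular complete intersection of the prescribed multidegree in $\P^{n+2}$ using the cyclic cover construction of Theorem \ref{main} (and its Nevanlinna counterpart from Section \ref{s5}), and then to deform inside the universal family of complete intersections to reach a smooth hyperbolic member.

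For cases (c) and (d), whose multidegree starts with a $2$, I take the base $X$ to be $\P^1\times\P^1$ embedded as a quadric in $\P^3$. A tower of $n-1$ cyclic covers of degrees $m_1,\ldots,m_{n-1}$ branched along unions of horizontal and vertical fibres---these are the $\omega$-integral curves for the $\omega$ given by the two rulings, exactly as in the $X_n$ example of Section \ref{s0}---produces a singular surface cut out in $\P^{n+2}$ by the quadric and by equations of the form $P_i(z_0,\ldots,z_3)=z_{3+i}^{m_i}$, hence a complete intersection of multidegree $(2,m_1,\ldots,m_{n-1})$. For cases (a) and (b), the base $X$ is $\P^2$ with $\omega$ the differential whose integral curves are the tangent lines to a fixed smooth conic, as in Theorem \ref{hshs}; an iterated tower of cyclic covers of degrees $m_1,\ldots,m_n$ branched along disjoint unions of such tangent lines produces a singular complete intersection of multidegree $(m_1,\ldots,m_n)$ in $\P^{n+2}$. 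In both setups the hyperbolicity criterion of Section \ref{s5} applies: the total space has no entire curves once the $\Q$-ampleness condition $\sum_{i}\frac{1}{m_i}\bigl(\sum_j D_{i,j}\bigr)-\L$ from Theorem \ref{main} is met and the preimages of the $\omega$-integral curves contain no rational or elliptic components. The thresholds $n\geq 8,\,5,\,6,\,4$ together with the hypotheses $m_i\geq 2$ or $m_i\geq 3$ are precisely what is needed to verify these inequalities, via a direct intersection computation in the Picard group of the base and a genus count on the cyclic covers of the $\omega$-integral lines.

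To pass from the singular hyperbolic $X_n$ to a smooth hyperbolic complete intersection of the same multidegree, I consider the natural parameter scheme $B$ of complete intersections of multidegree $(m_1,\ldots,m_n)$ in $\P^{n+2}$ and the universal family $\pi\colon\X\to B$. The singular $X_n$ constructed above is one fibre, and smooth fibres are Zariski-dense in $B$. I then invoke the standard openness of Brody hyperbolicity in a proper flat family of projective varieties: if fibres $\X_{t_k}$ with $t_k\to t_0$ all contained entire curves, then by Brody reparametrization and the normal-families theorem a limit entire curve would exist in $\X_{t_0}$. Applied to the point $t_0\in B$ corresponding to our hyperbolic $X_n$, this produces a Euclidean neighbourhood of $t_0$ in $B$ consisting of hyperbolic fibres; choosing any smooth point of this neighbourhood yields the required smooth hyperbolic complete intersection.

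The main obstacle is the case-by-case bookkeeping: one must realize each target multidegree while simultaneously arranging the $\Q$-ampleness inequality, the coprimality conditions $\gcd(a_{i,j},m_i)=1$ and $a_{i,j}\not\equiv -a_{i,j'}\pmod{m_i}$, the simple normal crossings of the branch locus, and the absence of genus-$0$ or $1$ components among the preimages of the $\omega$-integral curves, for every $(n,m_i)$ in the stated ranges. A secondary technical point is that the limit curve in the openness argument lives a priori on the singular $X_n$; since the singularities are isolated cyclic quotient points and hence contain no entire curves, any such limit must be genuinely nonconstant, contradicting the hyperbolicity of $X_n$ supplied by Theorem \ref{main}.
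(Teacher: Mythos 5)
Your plan matches the paper's overall strategy (cyclic-cover towers over $\P^1\times\P^1$ or $\P^2$ branched along $\omega$-integral curves, then apply the Nevanlinna-theoretic criterion, then perturb to a smooth nearby complete intersection via openness of hyperbolicity). The perturbation step is essentially identical to the paper's, which cites Kobayashi's theorem that hyperbolicity is open in proper families. However, there is a genuine gap in how you handle the degree-$2$ factors appearing in cases (a) and (c), and this is precisely where the hard work in the paper lies.

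When some $m_i = 2$, the corresponding branch divisor must be $D_{i,1}+D_{i,2}$ with both coefficients $a_{i,1}=a_{i,2}=1$ (the only possibility with $0<a_{i,j}<2$). Then $a_{i,1}+a_{i,2}=2\equiv 0 \pmod 2$, which directly violates the hypothesis $a_{i,j}\not\equiv -a_{i,j'}\pmod{m_i}$ of Theorem~\ref{main}; the nodes of $D_{i,1}+D_{i,2}$ produce $A_1=\tfrac{1}{2}(1,1)$ singularities, and the paper explicitly remarks that Lemma~\ref{newomega} fails in exactly this situation. So you cannot simply say that the $\Q$-ampleness inequality plus $m_i\geq 2$ ``are precisely what is needed'': Theorem~\ref{main} does not apply at all when a degree equals $2$. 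The paper circumvents this by abandoning the base section $\omega$ in favor of a modified section $\omega_0$ (as in Lemma~\ref{newomega2} of the surface-of-cuboids section) with extra linear factors chosen so that its zero locus passes through the problematic nodes, which restores the vanishing along the exceptional divisors; concretely the paper uses $(x^2-w^2)\,dx\,dw$ in Corollary~\ref{CompleteIntersectionsC} and $(x-w)(x-w+1)(x-w-1)(x-w-2)\,dx\,dw$ in Corollary~\ref{CompleteIntersectionsA}. Your proposal does not invoke any such modification, and in addition you propose to use $\P^2$ as the base for case (a); the paper instead uses $\P^1\times\P^1$ for case (a) whenever a degree equals $2$, precisely because the $\P^2$/tangent-line setup cannot accommodate the modified-$\omega_0$ trick (the tangent lines are tangent to the discriminant conic, so one cannot add the conic as a further $\omega$-integral branch or nodal component while keeping the total branch divisor simple normal crossings). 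In short: cases (b) and (d) are fine as you describe, but cases (a) and (c) with any $m_i=2$ require a new auxiliary differential, not merely a numerical check, and this is the missing idea.
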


The explicit families are shown at the end of Section \ref{s5}. This theorem gives evidence for \cite[Conjecture 0.18]{Dem18} in the case of surfaces. Smooth complete intersection surfaces $X_{n,k} \subset \P^n$ of multidegree $(k,\ldots,k)$ and hyperbolic have been constructed in \cite{GF16} for $k=3$, $n\geq 6$; $k=4,5$, $n\geq 5$; $k \geq 6$, $n\geq 4$. We also point out that hyperbolic complete intersections of high multidegree have been constructed by Brotbek \cite{Br14} (see also \cite{X15}).

\subsection*{Acknowledgements} We are grateful to Jean-Pierre Demailly, Simone Diverio, Bruno de Oliveira, and Damiano Testa for interesting conversations and email correspondence. The first author is supported by the FONDECYT Iniciaci\'on en Investigaci\'on grant 11170192, and the CONICYT PAI grant 79170039. The second author was supported by the FONDECYT regular grant 1150068.

\section{The generalized surfaces of cuboids} \label{s1}

Let $n\geq 3$, $m\geq 2$ be integers. Let $\{F_1,\ldots,F_{nm}\}$ and $\{G_1,\ldots, G_{nm}\}$ be collections of distinct $nm$ vertical and horizontal fibres of $\P^1 \times \P^1$. Let us denote the elements of Pic$(\P^1 \times \P^1) \simeq \Z^2$ by $(a,b)$. Then we have $$ F_{km+1}+\ldots+F_{(k+1)m} + G_{km+1}+\ldots+ G_{(k+1)m} = (m,m) $$ for $0\leq k\leq n-1$. These expressions define a tower of $n$ cyclic covers of degree $m$ $$X_n \to X_{n-1} \to \ldots \to X_1 \to X_0:= \P^1 \times \P^1 $$ inductively as follows: Let $f_{k+1} \colon X_{k+1} \to X_{k}$ be the cyclic cover defined by the equation of line bundles $$ g_k^* \big(F_{km+1}+\ldots+F_{(k+1)m} + G_{km+1}+ \ldots +G_{(k+1)m} \big) \simeq g_k^*(1,1)^{\otimes m},$$ where $g_k:= f_1 \circ \cdots \circ f_k$. To be more precise, the surface $X_{k+1}$ is defined as $X_{k+1}:= \spec_{X_k} \bigoplus_{j=0}^{m-1} g_k^*(1,1)^{-j},$ and so the finite morphism $f_{k+1}$ satisfies ${f_{k+1}}_* \O_{X_{k+1}} =\bigoplus_{j=0}^{m-1} g_k^*(1,1)^{-j}$; for details see e.g.\ \cite[Section 1]{U10}. All $X_k$ are normal projective surfaces with $k m^{k+1}$ singularities of type $$A_{m-1} \colon \ (0,0) \in (z^m-xy)\subset \C^3.$$ As shown in the introduction for $n=3$, the surfaces $X_k$ are complete intersections in $\P^{k+3}$, and so they are simply connected.

Let $D_{k} \subset X_{k-1}$ be the branch divisor of $f_k \colon X_k \to X_{k-1}$, this is $$D_k:= g_{k-1}^*\big(F_{km+1}+\ldots+F_{(k+1)m} + G_{km+1}+ \ldots +G_{(k+1)m} \big).$$ Hence it is a collection of $2m$ smooth curves which form a simple normal crossings divisor with $m^{k+1}$ nodes. By the Riemann-Hurwitz formula, each smooth curve $\Gamma$ in $D_k$ satisfies $$ 2g(\Gamma)-2=m^{k-1}((m-1)(k-1)-2),$$ and $\Gamma^2=0$, where $g(\Gamma)$ is the genus of $\Gamma$. We have $D_k^2=2m^{k+1}$, and $D_k \cdot K_{X_{k-1}}=2m(2g(\Gamma)-2)$. We also have the formulas $$\chi(\O_{X_k})=m \chi(\O_{X_{k-1}}) + \frac{(m-1)(2m-1)}{12m} D_k^2 + \frac{(m-1)}{4} D_k \cdot K_{X_{k-1}},$$ and $ K_{X_k}^2= m K_{X_{k-1}}^2 + \frac{(m-1)^2}{m} D_k^2 + 2(m-1) D_k \cdot K_{X_{k-1}},$ and so $$K_{X_k}^2 -8\chi(\O_{X_k})= -\frac{2k}{3} m^k(m^2-1) <0.$$ In fact $ \frac{K_{X_k}^2}{\chi(\O_{X_k})}$ approaches $8$ as $k>>0$. We also have $$K_{X_k} \sim g_k^*\big((-2,-2) + \frac{(m-1)}{m} (m,m)k)\big)=(k(m-1)-2) g_k^*(1,1),$$ and so $X_k$ has ample canonical class if and only if $k(m-1)>2$. Hence for $k=n$ the surface $X_n$ is of general type.

Let $\sigma_n \colon X_n' \to X_n$ be the minimal resolution of the singularities in $X_n$, and let $g_n'=\sigma_n \circ g_n$. Then $${g_n'}^*(mn,mn)=mR+mE$$ where $R$ is the strict transform by $\sigma_n$ of the branch divisor (in $X_0$) of $g_n$, and $E$ is the (reduced) sum of the exceptional curves of $\sigma_n$. That is a simple local toric computation; see e.g.\ \cite[2.1]{HTU17}. Therefore, we have\begin{equation}
{g_n'}^*(n,n)=R+E.
\label{eq1}
\end{equation}
We now recall the key general notion of $\omega$-integral curve (see \cite[Definition 2.4]{V00}, \cite[Definition 3.2]{GF15}). 

\begin{definition}
Let $X$ be a smooth surface. Let $r\geq 1$ be an integer, let $\L$ be an invertible sheaf on $X$, and let $\omega \in H^0(X,\L \otimes S^r\Omega_X^1)$. Let $C$ be an irreducible curve on $X$, and let $\varphi_C \colon \tilde{C} \to X$ be the normalization of $C \subset X$. The curve $C$ is said to be $\omega$\emph{-integral} if $\varphi_C^* \omega \in H^0(\tilde{C}, \varphi_C^* \L \otimes S^r \Omega_{\tilde{C}}^1)$ is zero.
\label{omega}
\end{definition}

Let $\omega \in H^0(X_0, (2,2) \otimes S^2 \Omega^1_{X_0})$ be the global section $z_3^2 dz_1 dz_2$. Consider the isomorphism $h \colon \P^1 \times \P^1 \to X_0$ given by $h([x,y] \times [w,z])=[xw,xz,yw,yz]$. The section $\omega$ corresponds to the section $y^2z^2 dx dw$ under $h$. Therefore, horizontal and vertical fibres are $\omega$-integral, and this is the complete set of $\omega$-integral curves by \cite[6.6]{GF15}. In particular the branch loci of $g_n \colon X_n \to X_0$ is formed by $\omega$-integral curves.

\begin{notation}
Let $X, r, \L$, and $\omega$ be as in Definition \ref{omega}. Let $Y$ be a smooth surface, and $\pi \colon Y \to X$ be a dominant morphism. We denote by $\pi^{\bullet} \omega$ the image of $\omega$ under the natural pull-back morphism $H^0(X,\L \otimes S^r\Omega_X^1) \to H^0(Y,\pi^*(\L) \otimes S^r\Omega_Y^1)$; see \cite{GF15}.
\label{punto}
\end{notation}

As in \cite[Theorem 3.87]{GF15}, we have that there exists a section $\omega'$ in $H^0(X_n', \O_{X_n'} (-(m-1)R) \otimes g_n'^*(2,2) \otimes S^2\Omega_{X_n'}^1)$ whose image under the natural morphism $$H^0\big(X_n', \O_{X_n'} (-(m-1)R) \otimes g_n'^*(2,2) \otimes S^2\Omega_{X_n'}^1\big) \to H^0\big(X_n', g_n'^*(2,2) \otimes S^2\Omega_{X_n'}^1\big)$$ is precisely $g_n'^{\bullet} \omega$. We now show the existence of a section $\omega''$ in a ``more negative" sheaf, which also has $g_n'^{\bullet} \omega$ as image.

\begin{lemma}
Let $m>2$. Then there is $\omega''$ in $H^0(X_n', \O_{X_n'} (-(m-1)R-E) \otimes g_n'^*(2,2) \otimes S^2\Omega_{X_n'}^1)$ whose image under the natural morphism $$H^0\big(X_n', \O_{X_n'} (-(m-1)R-E) \otimes g_n'^*(2,2) \otimes S^2\Omega_{X_n'}^1\big) \to H^0\big(X_n', g_n'^*(2,2) \otimes S^2\Omega_{X_n'}^1\big)$$ is $g_n'^{\bullet} \omega$.
\label{newomega}
\end{lemma}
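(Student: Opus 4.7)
The plan is to reduce the lemma to a local question at each component of $E$, and then to verify the required additional vanishing of $\omega'$ by a direct computation in the toric charts of the minimal resolution of the $A_{m-1}$ singularities of $X_n$.

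First I would observe that the inclusion $\O_{X_n'}(-(m-1)R-E)\hookrightarrow\O_{X_n'}(-(m-1)R)$, tensored with $g_n'^*(2,2)\otimes S^2\Omega_{X_n'}^1$, has cokernel supported on $E$, so producing $\omega''$ amounts to showing that $\omega'$, regarded as a section of $g_n'^*(2,2)\otimes S^2\Omega_{X_n'}^1$, vanishes along each component of $E$. Every component of $E$ lies over an $A_{m-1}$ singularity of $X_n$, which in turn sits over a node $p=F_i\cap G_j\in X_0$ for which both $F_i$ and $G_j$ are branches of the same cyclic cover $f_k$ in the tower (nodes whose two branches appear at distinct stages yield smooth preimages in $X_n$ and contribute nothing to $E$). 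Pulling back the local coordinates at $p$ through the \'etale stages of the tower gives analytic coordinates $(u,v,z)$ on $X_n$ with $z^m=uv$; and since both $\{u=0\}$ and $\{v=0\}$ are $\omega$-integral on $X_0$, the local form of $\omega$ at $p$ is
\[
\omega\;=\;v\,\widetilde{\alpha}(u,v)\,du^2\;+\;\gamma(u,v)\,du\cdot dv\;+\;u\,\widetilde{\beta}(u,v)\,dv^2
\]
for some holomorphic $\widetilde{\alpha},\gamma,\widetilde{\beta}$.

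Next I would run the local computation on the toric minimal resolution of $A_{m-1}$, which is covered by $m$ affine charts with coordinates $(s,t)$ satisfying $u=s^k t^{k-1}$, $v=s^{m-k}t^{m-k+1}$ and $z=st$ for $k=1,\ldots,m$. A direct expansion in chart $k$ gives
\[
du\cdot dv\;=\;k(m-k)\,s^{m-2}t^{m}\,ds^2\;+\;c_k\,s^{m-1}t^{m-1}\,ds\cdot dt\;+\;(k-1)(m-k+1)\,s^{m}t^{m-2}\,dt^2,
\]
with $c_k=k(m-k+1)+(k-1)(m-k)$, while $v\,du^2$ and $u\,dv^2$ contribute only monomials with strictly higher powers of $s$ and $t$. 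A short check identifies $\{s=0\}$ as exceptional for $1\le k\le m-1$ and $\{t=0\}$ as exceptional for $2\le k\le m$, while $\{t=0\}$ in chart $1$ and $\{s=0\}$ in chart $m$ are the strict transforms of $\{v=0\}$ and $\{u=0\}$ respectively, hence belong to $R$. Reading off the minimum exponents chart by chart then yields that $g_n'^\bullet\omega$ vanishes to order at least $m-2$ along every component of $E$ and to order at least $m-1$ along every component of $R$.

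Combining everything, since $m>2$ gives $m-2\ge 1$, the section $\omega'=g_n'^\bullet\omega$ vanishes along $E$ and lifts to the desired $\omega''\in H^0(X_n',\,\O_{X_n'}(-(m-1)R-E)\otimes g_n'^*(2,2)\otimes S^2\Omega_{X_n'}^1)$, whose image is again $g_n'^\bullet\omega$. The main obstacle will be the toric bookkeeping to identify which chart contributes which exceptional versus strict-transform curve, together with the verification of the displayed local form of $\omega$ at each relevant node of the branch divisor; once those are in hand, the threshold $m>2$ is seen to be sharp, since at $m=2$ the cross-term in $du\cdot dv$ already has exponent $m-2=0$ along the $(-2)$-curve, leaving no further vanishing available.
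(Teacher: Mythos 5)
Your proof is correct and follows essentially the same strategy as the paper: reduce to a local computation at the nodes of the branch divisor, use the toric description of the minimal resolution of $A_{m-1}$, and read off the orders of vanishing of the pullback of $\omega$ along the exceptional curves. Your chart parametrization $u=s^kt^{k-1},\ v=s^{m-k}t^{m-k+1}$ (for $k=1,\dots,m$) is the same as the paper's $g'(u_i,u_{i+1})=(u_i^{m-i}u_{i+1}^{m-i-1},u_i^iu_{i+1}^{i+1})$ under the substitution $i=m-k$, $(s,t)=(u_i,u_{i+1})$, and the bookkeeping of which of $\{s=0\},\{t=0\}$ is exceptional versus a component of $R$ matches. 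You additionally pin down the precise vanishing order $m-2$ along $E$ from the cross term $du\cdot dv$, which the paper does not state explicitly but is implicit in its argument, and your observation that the threshold $m>2$ is sharp is the paper's remark after the lemma. One small imprecision: the claim that $v\,du^2$ and $u\,dv^2$ contribute ``only monomials with strictly higher powers of $s$ and $t$'' is slightly too strong at the boundary charts $k=1$ (the $t$-power of the $ds^2$ coefficient of $v\,du^2$ is then equal, not strictly higher) and $k=m$ (symmetrically); this does not affect the conclusion since the $s$- and $t$-orders from $du\cdot dv$ alone already give $m-2\ge1$, but the wording should be ``at least as high, with the minimal orders realized by $du\cdot dv$.''
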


\begin{proof}
We only need to prove that $g_n'^{\bullet} \omega$ vanishes along the divisor $E$. This is a general toric local computation with differentials. So let us say that $\omega \in H^0(X_0, \L \otimes S^r\Omega_{X_0}^1)$, where in our case $\L=(2,2)$ and $r=2$.

Let us consider one node $P$ of the branch divisor of $g_k$ for some $0<k<n$. Take one preimage $Q$ of this node $P$ by the morphism $g_n$. At $Q$ we have a rational double point of type $A_{m-1}$, which is in particular a cyclic quotient singularity. By \cite[III, Theorem 5.1]{BHPV04}, the singularity and the map $g_n \colon X_n \to X_0$ is locally analytically isomorphic to $U=(z^m-xy) \subset \C^3$ and the projection $g(x,y,z)=(x,y)$ respectively. Thus to compute the pull-back of $\omega$ under $g_n$, we use this local model. Moreover, this model has a toric description as follows. See e.g \cite{R03}.

Let $\sigma' \colon V \to U$ be the minimal resolution, and $g' \colon V \to \C^2$ the composition of $\sigma'$ with $g$. Let $E_0$ be the (reduced) preimage under $g'$ of $x=0$, and let $E_m$ be the (reduced) preimage under $g'$ of $y=0$. Let $E_1, E_2, \ldots, E_{m-1}$ be the chain of $\P^1$'s in $V$ which corresponds to the exceptional divisor of $\sigma'$. Hence $E_0,E_1,\ldots,E_{m-1},E_m$ also form a chain. Let $u_i=0$ be the local coordinate defining $E_i$, so that at each node of $E_0,E_1,\ldots,E_{m-1},E_m$ we have local coordinates $u_i, u_{i+1}$ for $V$. Then (see \cite[Example 3.1]{R03}) we have that locally $g'$ is given by $$g'(u_i,u_{i+1})=(u_i^{m-i}u_{i+1}^{m-i-1},u_i^{i}u_{i+1}^{i+1}).$$ Therefore we have the pull-back relations $$dx=(m-i)u_i^{m-i-1}u_{i+1}^{m-i-1} du_{i}+(m-i-1)u_i^{m-i}u_{i+1}^{m-i-2} du_{i+1}$$ and $$dy=iu_i^{i-1}u_{i+1}^{i+1} du_{i}+(i+1)u_i^{i}u_{i+1}^{i} du_{i+1}.$$

Via a local trivialization of $\L$, we can identify $\omega$ with a section of $S^r \Omega_{\C^2}^1$ around $(0,0)$ as $$ \omega= a_0 dx^{\otimes r} + a_1 dx^{\otimes (r-1)} \otimes dy + \ldots+ a_r dy^{\otimes r},$$ where the $a_i$ are holomorphic around $(0,0)$. Since $x=0$ and $y=0$ are $\omega$-integral curves, then as in \cite[Theorem 3.87]{GF15}, we have that $a_0=y a_0'$ and $a_r=x a_r'$. Therefore the pull-back of $a_0 dx^{\otimes r}$ and $a_r dy^{\otimes r}$ vanish along the divisor $E_1+\ldots+E_{m-1}$. On the other hand, the pull-back of $dx^{\otimes (r-k)} \otimes dy^{\otimes k}$ for $0<k<r$ vanishes on $u_i=0$ if and only if $i-1>0$ or $m-i-1>0$, and both inequalities hold because $m>2$. 
Therefore the pull-back of $\omega$ vanishes along $E_0+E_1+\ldots+E_{m-1}+E_{m}$. Moreover, by \cite[Theorem 3.87]{GF15}, it vanishes of order $m-1$ along $E_0$ and $E_m$.    
\end{proof}

\begin{remark}
The previous lemma is valid for any tower of cyclic morphisms of order $m>2$ branched along a simple normal crossings divisor which is formed by $\omega$-integral curves, so that the singularities are rational double points of type $A_{m-1}$. 
\end{remark} 

\begin{theorem}
Let $m>2$, and let $C \subset X_n'$ be a curve of geometric genus $g$ which is not an exceptional curve of $\sigma_n$. If $$\frac{4g-4}{n-2} < g_n'(C) \cdot (1,1),$$ then $g_n'(C)$ is an $\omega$-integral curve.
\label{gencub}
\end{theorem}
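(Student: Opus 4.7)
The plan is a contradiction argument: assume $g_n'(C)$ is not $\omega$-integral and deduce $g_n'(C)\cdot(1,1)\leq(4g-4)/(n-2)$. First I would dispose of the case $C\subset R$. Then $g_n'(C)=g_n(\sigma_n(C))$ is contained in a component of the branch divisor of $g_n$ in $X_0$, which is a horizontal or vertical fibre of $\P^1\times\P^1$; such fibres were noted just above Notation \ref{punto} to be $\omega$-integral, contradicting our assumption. So from here on $C\not\subset R$, and also $C\not\subset E$ because $C$ is not an exceptional curve of $\sigma_n$.

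Next, I would pull back $\omega''$ from Lemma \ref{newomega} to the normalization $\varphi_C\colon\tilde C\to X_n'$ and take its image under the cotangent map $\varphi_C^*\Omega^1_{X_n'}\to\Omega^1_{\tilde C}$, obtaining a section
\[
\eta\in H^0\bigl(\tilde C,\,\varphi_C^*\O_{X_n'}(-(m-1)R-E)\otimes\varphi_C^*g_n'^*(2,2)\otimes\omega_{\tilde C}^{\otimes 2}\bigr).
\]
By Lemma \ref{newomega} the image of $\eta$ in $\varphi_C^*g_n'^*(2,2)\otimes\omega_{\tilde C}^{\otimes 2}$ is $(g_n'\circ\varphi_C)^*\omega$, and since $g_n'\circ\varphi_C$ factors through the normalization of $g_n'(C)$ by a finite surjective map, this pullback is nonzero precisely when $g_n'(C)$ is not $\omega$-integral. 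Hence $\eta\neq 0$.

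Existence of a nonzero section forces $\deg\eta\geq 0$. Setting $r:=C\cdot R$ and $\epsilon:=C\cdot E$, and using \eqref{eq1} in the form $g_n'^*(1,1)\cdot C=\tfrac{1}{n}(r+\epsilon)$, this inequality reads
\[
-(m-1)r-\epsilon+\tfrac{2}{n}(r+\epsilon)+(4g-4)\geq 0.
\]
Since $m>2$ implies $(m-1)n-2\geq n-2$, rearranging yields $4g-4\geq (n-2)\,g_n'^*(1,1)\cdot C$. Finally, $g_n'^*(1,1)\cdot C=e\cdot g_n'(C)\cdot(1,1)$ with $e=\deg(g_n'|_C)\geq 1$, valid because $C$ is not contracted by $g_n'$, and so $g_n'(C)\cdot(1,1)\leq(4g-4)/(n-2)$ follows, contradicting the hypothesis.

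The main obstacle is the middle paragraph: carefully tracing the definitions through the composition $g_n'\circ\varphi_C$ so that the vanishing of $\eta$ on $\tilde C$ is controlled by the $\omega$-integrality of $g_n'(C)$, and not by a condition on $C$ or on an intermediate image.
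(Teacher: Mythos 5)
Your proof is correct and follows essentially the same approach as the paper: Lemma \ref{newomega} produces $\omega''$, and the same degree computation on $\tilde C$ (via equation \eqref{eq1} and the projection formula) forces the conclusion. The only differences are cosmetic: you argue by contraposition, explicitly dispose of the corner case $C\subset R$ (which the paper leaves implicit), and replace the citations to \cite[Prop.\ 3.88]{GF15} and \cite[Thm.\ 3.35]{GF15} by a direct factorization of $g_n'\circ\varphi_C$ through the normalization of $g_n'(C)$ — unpacking exactly what those references establish.
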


\begin{proof}
By Lemma \ref{newomega}, we have $$\omega'' \in H^0(X_n', \O_{X_n'} (-(m-1)R-E) \otimes g_n'^*(2,2) \otimes S^2\Omega_{X_n'}^1)$$ whose image under the natural morphism $$H^0\big(X_n', \O_{X_n'} (-(m-1)R-E) \otimes g_n'^*(2,2) \otimes S^2\Omega_{X_n'}^1\big) \to H^0\big(X_n', g_n'^*(2,2) \otimes S^2\Omega_{X_n'}^1\big)$$ is $g_n'^{\bullet} \omega$. Let $\varphi_C \colon \tilde{C} \to X_n'$ be the normalization of $C \subset X_n'$. By Equality \eqref{eq1}, we obtain that $$g_n'^*(-n,-n)-(m-2)R = -(m-1)R-E$$ in Pic$(X_n')$. In this way $$\O_{X_n'}(-(m-1)R-E) \otimes g_n'^*(2,2) \simeq g_n'^*(2-n,2-n) \otimes \O_{X_n'}(-(m-2)R)=: \L, $$ and so $\deg_{\tilde{C}} \Big(\varphi_C^* \L \otimes S^2\Omega_{\tilde{C}}^1 \Big) \leq (2-n,2-n) \cdot g_n'(C) + 2(2g-2) <0 $ by the projection formula and the hypothesis. Therefore $H^0(\tilde{C}, \varphi_C^* \L \otimes S^2 \Omega_{\tilde{C}}^1)=0$, and $C$ is a $\omega''$-integral curve. As in \cite[Proposition 3.88]{GF15}, the $\omega''$-integral curves in $X_n'$ are also $g_n'^{\bullet} \omega$-integral curves. On the other hand, by \cite[Theorem 3.35]{GF15} the $g_n'^{\bullet} \omega$-integral curves in $X_n'$ are either the exceptional divisors of $\sigma_n$ or curves $C \subset X_n'$ such that $g_n'(C)$ is $\omega$-integral in $X_0$. 
\end{proof}

\begin{corollary}
There are no curves of geometric genus $\leq 1$ in $X_n$ for any $m>2$.
\label{norateli}
\end{corollary}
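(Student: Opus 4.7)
The plan is to use Theorem \ref{gencub} to reduce to the case where the image of the curve in $X_0$ is a fibre, and then to compute by Riemann--Hurwitz that every such preimage has geometric genus at least $2$.

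Let $C \subset X_n$ be an irreducible curve of geometric genus $g \leq 1$, and denote by $C' \subset X_n'$ its strict transform under $\sigma_n$; then $g(C') = g$ and $C'$ is not an exceptional curve of $\sigma_n$. Since $g_n$ is finite, $g_n'(C') = g_n(C)$ is an irreducible curve in $X_0 = \P^1 \times \P^1$, so $g_n'(C') \cdot (1,1) \geq 1$. For $g \leq 1$ one has $(4g-4)/(n-2) \leq 0$, so the hypothesis of Theorem \ref{gencub} is satisfied and $g_n'(C') = F$ is an $\omega$-integral curve, that is, a vertical or horizontal fibre of $X_0$.

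It remains to show that every irreducible component of $g_n^*(F)$ has geometric genus at least $2$. By symmetry assume $F$ is vertical, and distinguish two cases: either $F$ is generic (not in any branch divisor of the tower), or $F$ is one of the branch fibres at some level $k_0+1$ with $0 \leq k_0 \leq n-1$. In the generic case, $F$ avoids the nodes of every branch divisor, so $g_k^*(F)$ stays in the smooth locus of $X_k$; at each step $f_{k+1}$ restricts to a cyclic $m$-cover of $g_k^*(F)$ fully ramified at the $m^{k+1}$ points of $g_k^*(F) \cap D_{k+1}$ (only the $m$ level-$(k+1)$ horizontal fibres meet $F$, each contributing $m^k$ points upstairs by the projection formula). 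Irreducibility is preserved by the existence of a ramification point, and Riemann--Hurwitz gives
$$2 g^{(k+1)} - 2 = m (2 g^{(k)} - 2) + m^{k+1}(m-1), \qquad g^{(0)} = 0,$$
so $g^{(1)} = (m-1)(m-2)/2$ and $g^{(2)} = m^2(m-2) + 1$. In the branch case, the computation is identical up to $X_{k_0}$, and at $X_{k_0+1}$ the reduced preimage $F'$ is isomorphic to $g_{k_0}^*(F)$ as an abstract smooth curve, of genus $g^{(k_0)}$. For $k \geq k_0+1$, the abstract reduced preimage has degree $m^{k-1}$ over $F$ and the same Riemann--Hurwitz recursion (shifted by one) yields $\tilde g^{(k)} = g^{(k-1)}$. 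Taking $k = n$, in every case the preimage of $F$ in $X_n$ has geometric genus at least $g^{(n-1)} \geq g^{(2)} = m^2(m-2) + 1 \geq 10$ for $m \geq 3$ and $n \geq 3$, contradicting $g \leq 1$.

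The main obstacle I foresee is the branch case: the curve $F'$ passes through the $A_{m-1}$ singularities of $X_{k_0+1}$ lying over the nodes of $D_{k_0+1}$, and one has to verify that subsequent covers behave correctly there. Since $g_{k_0+1}^*(1,1)$ is locally trivial at such a singularity $Q$, the cover $f_{k_0+2}$ is locally a trivial \'etale $m$-cover there and introduces no ramification along $F'$; moreover $F'$ is smooth as an abstract curve (locally parametrized by the coordinate transverse to the branch divisor in the $A_{m-1}$ model $z^m=xy$), so the Riemann--Hurwitz computation on normalizations remains valid. Global irreducibility of the preimage at each step is ensured by the presence of at least one fully ramified point over $D_{k_0+j}$.
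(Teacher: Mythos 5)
Your proof is correct and follows the same structure as the paper's: first apply Theorem \ref{gencub} with the observation that $(4g-4)/(n-2)\leq 0 < g_n'(C)\cdot(1,1)$ when $g\leq 1$ and $n\geq 3$, to conclude $g_n'(C)$ is a fibre; then show every component of the preimage of any fibre in $X_n$ has geometric genus bigger than $1$. Where the paper simply asserts the last point in one sentence, you carry out the full Riemann--Hurwitz recursion, and in particular you handle the two cases (generic fibre vs.\ branch fibre) and the subtlety of $F'$ passing through the $A_{m-1}$ singularities, observing that the later covers are \'etale there. Your recursion $2g^{(j+1)}-2 = m(2g^{(j)}-2)+m^{j+1}(m-1)$ solves to $2g^{(j)}-2 = m^j\big((m-1)j-2\big)$, which agrees with the paper's displayed formula for the genus of the branch curves, and the shift $\tilde g^{(k)}=g^{(k-1)}$ in the branch case is verified correctly by the projection formula; the bound $g^{(n-1)}\geq g^{(2)} = m^2(m-2)+1\geq 10$ then gives the contradiction. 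This is a filled-in version of the paper's argument rather than a different route, and it is sound.
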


\begin{proof}
By Theorem \ref{gencub}, if $C$ is a curve in $X_n'$ of geometric genus $0$ or $1$ which is not an exceptional curve of $\sigma_n \colon X_n' \to X_n$, then $g_n'(C)$ is $\omega$-integral. All $\omega$-integral curves are fibres of $\P^1 \times \P^1$. But, since $n>2$ and $m>2$, we have that the preimage of a fibre in $\P^1 \times \P^1$ has geometric genus bigger than $1$. Therefore the only curves of geometric genus $0$ or $1$ in $X_n'$ are the exceptional curves. As $\sigma_n$ contracts them, $X_n$ has no such curves. 
\end{proof}

\begin{remark}
Theorem \ref{gencub} is not true for $m=2$, since otherwise we would obtain all curves with geometric genus $\leq 1$ from fibres, and that is not the case (see Section \ref{s2}). The problem is that Lemma \ref{newomega} does not work for $m=2$, showing optimality in that sense. We will revisit this issue in Section \ref{s3}.
\end{remark}

\begin{example}
Under the Bombieri-Lang conjecture, Corollary \ref{norateli} says that, for example, the complete intersection surface
$$\prod_{i=1}^m (x_0-ix_1+i^2x_2)=x_4^m \ \ \ \ \ \ \prod_{i=1}^m (x_0-(i+m)x_1+(i+m)^2x_2)=x_5^m $$ $$\prod_{i=1}^m (x_0-(i-m)x_1+(i-m)^2x_2)=x_6^m \ \ \ \ \ \ \ x_0 x_3+ x_2^2=x_1 x_2$$ in $\P^6$ can only have a finite number of points in $\P^6(\Q)$. Here $n=3$, and we have chosen a specific model and set of parameters in $\Z$.
\end{example}

\section{The surface of cuboids} \label{s2}

We know that Theorem \ref{gencub} does not work for the surface of cuboids $$z_0z_3=z_4^2 \ \ \ \ \ (z_0-z_3)^2+(z_1+z_2)^2=z_5^2 $$ $$(z_0+z_3)^2-(z_1+z_2)^2=z_6^2 \ \ \ \ \ z_0 z_3=z_1 z_2 $$ in $\P^6$, since here $m=2$. The purpose of this section is to adapt the method to get some results on rational curves of this surface. We follow the notation of Section \ref{s1} for this particular example, and so the surface of cuboids is denoted by $X_3$. We recall that $X_3$ is isomorphic to the original surface of cuboids: $$x_0^2+x_1^2+x_2^2=x_3^2, \ \ \ \ x_0^2+x_1^2=x_4^2, \ \ \ \ x_0^2+x_2^2=x_5^2, \ \ \ \ x_1^2+x_2^2=x_6^2$$ in $\P^6$, by the isomorphism $z_0=x_0-ix_1$, $z_1=x_3-x_2$, $z_2=x_3+x_2$, $z_3=x_0+ix_1$, $z_4=x_4$, $z_5=2x_5$, $z_6=2i x_6$, where $i=\sqrt{-1}$. This surface has been extensively studied, because it is related to the Perfect Cuboid Problem of Euler. Some references are \cite{ST10}, \cite{vLu00}, \cite{B13}, \cite{FS16}.

There are at least $92$ curves of geometric genus zero or one in $X_3$. They are all smooth (see \cite{vLu00}):

\begin{itemize}
\item The irreducible components of $x_0x_1x_2x_3=0$, which are $32$ rational curves;
\item The irreducible components of $x_4x_5x_6=0$, which are $12$ elliptic curves, corresponding to the pull-backs of the $6$ horizontal fibres $\{F_1,\ldots, F_6\}$, and the $6$ vertical fibres $\{G_1,\ldots,G_6\}$; 
\item The curve defined by the equations 
$$x_0=x_1, \ \ x_4=x_5, \ \ \sqrt{2}x_0 = x_6, \ \ x_2^2+x_6^2=x_3^2, \ \ 2x_5^2+x_6^2=2x_3^2,$$
and the curves obtained as orbits by applying the automorphisms of $X_3$. This gives us $48$ elliptic curves.
\end{itemize}

It was proved by Stoll and Testa \cite{ST10} that every curve of geometric genus $\leq 1$ in $X_3$ of degree less than or equal to $4$ belongs to this list, and they conjectured that these are all the curves with geometric genus $\leq 1$ in this surface.

Using the global section (appearing in Section \ref{s1}) $$\omega\in H^0(X_0,(2,2)\otimes S^2\Omega^1_{X_0}),$$ from a weaker version of Lemma 2.3 that works for the case $m=2$, it is obtained the following (cf.\  Corollary 6.43 and Theorem 6.48 in \cite{GF15}):

\begin{theorem}
Let $C$ be an irreducible curve on $X_3$ with strict transform $C'\subseteq X'_3$. If
$$\mathrm{deg}_{\tilde{C}}(\varphi_C^*(\mathcal{O}_{X_3}(-R+E)\otimes g_3'^*(2,2))\otimes S^2\Omega^1_{\tilde{C}})=-\mathrm{deg}(C)+(E.C')+4g(C)-4$$
is negative, then $g_n'(C)$ is an $\omega$-integral curve.
\end{theorem}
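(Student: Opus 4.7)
The plan is to follow the strategy of Theorem \ref{gencub} verbatim, replacing Lemma \ref{newomega} with its weaker $m=2$ analog (available through \cite[Corollary 6.43 and Theorem 6.48]{GF15}). That weaker lemma produces a section
$$\omega'' \in H^0\big(X_3',\,\O_{X_3'}(-R+E) \otimes g_3'^*(2,2) \otimes S^2\Omega_{X_3'}^1\big)$$
whose image under the natural morphism into $H^0\big(X_3', g_3'^*(2,2) \otimes S^2\Omega_{X_3'}^1\big)$ equals $g_3'^{\bullet}\omega$. The twist $+E$ (in place of the $-E$ twist obtained in Lemma \ref{newomega}) is forced by the $m=2$ local toric computation: at a node of the chain $E_0+E_1+E_2$ resolving an $A_1$ singularity, the cross-derivative summand $dx\otimes dy$ of $\omega$ pulls back to $2u_0u_1\,du_0\otimes du_1 + u_0^2\,du_1^{\otimes 2}$, and the second piece survives restriction to the exceptional divisor $E_1$.

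Next, let $\varphi_C\colon \tilde C \to X_3'$ be the normalization of $C'$ and pull $\omega''$ back along $\varphi_C^{\bullet}$. Computing the degree of the target line bundle via the projection formula, Equality \eqref{eq1} (which for $n=3$ reads $g_3'^*(3,3)=R+E$), and the identity $\deg_{\tilde C} S^2\Omega^1_{\tilde C} = 4g(C)-4$, the output collapses to exactly the expression $-\deg(C)+(E\cdot C')+4g(C)-4$. If this number is negative then $H^0\big(\tilde C,\,\varphi_C^*(\O_{X_3'}(-R+E)\otimes g_3'^*(2,2))\otimes S^2\Omega^1_{\tilde C}\big)=0$, so $\varphi_C^{\bullet}\omega''=0$, meaning $C$ is $\omega''$-integral. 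Since $\omega''$ maps to $g_3'^{\bullet}\omega$, the argument of \cite[Proposition 3.88]{GF15} gives $g_3'^{\bullet}\omega$-integrality of $C$, and \cite[Theorem 3.35]{GF15} identifies $g_3'^{\bullet}\omega$-integral curves on $X_3'$ as either exceptional for $\sigma_3$ or having $g_3'$-image an $\omega$-integral curve on $X_0$, which is the desired conclusion.

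The principal obstacle is establishing the first step: the $m=2$ replacement for Lemma \ref{newomega}. Its proof for $m>2$ used the estimate that $i-1>0$ or $m-i-1>0$ holds on every exceptional divisor $E_i$, and this breaks down at $i=1$ when $m=2$. One must therefore track the pull-back of each monomial in $\omega$ case by case along the unique exceptional divisor at each $A_1$-node, absorb the resulting defect into the sheaf by trading vanishing along $E$ for a pole, and verify that the weaker sheaf $\O_{X_3'}(-R+E)\otimes g_3'^*(2,2)\otimes S^2\Omega^1_{X_3'}$ still admits a global lift of $g_3'^{\bullet}\omega$. This defect is precisely the source of the extra $E\cdot C'$ contribution in the negativity criterion, and explains why — unlike in Corollary \ref{norateli} — the resulting bound is no longer strong enough on its own to exclude curves of geometric genus $\leq 1$.
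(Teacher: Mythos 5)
Your overall strategy --- run the Theorem \ref{gencub} argument with a weaker $m=2$ replacement for Lemma \ref{newomega}, then close with \cite[Prop.\ 3.88, Thm.\ 3.35]{GF15} --- is the intended route. But the form of the weak lemma in your proposal is wrong, and as a consequence the arithmetic you assert does not actually hold. The $m=2$ analogue (quoted in the paper just before Lemma \ref{newomega}, from \cite[Theorem 3.87]{GF15}) gives a section of $\O_{X_3'}(-(m-1)R)\otimes g_3'^*(2,2)\otimes S^2\Omega^1_{X_3'}$, which for $m=2$ is $\O_{X_3'}(-R)\otimes g_3'^*(2,2)\otimes S^2\Omega^1_{X_3'}$: there is no $+E$ twist. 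Your ``trade vanishing along $E$ for a pole'' narrative does not correspond to an actual map: the natural morphism into $H^0\big(X_3',g_3'^*(2,2)\otimes S^2\Omega^1_{X_3'}\big)$ exists when the twisting divisor is effective, and $R-E$ is not effective (the strict transform $R$ meets every exceptional component). The local computation you quote --- that the cross-term of $\omega$ pulls back to $2u_0u_1\,du_0\otimes du_1 + u_0^2\,du_1^{\otimes 2}$, whose second summand is nonzero on $E_1$ --- is exactly what blocks the $-E$ twist of Lemma \ref{newomega} at $m=2$ and forces the retreat to $\O_{X_3'}(-R)$; it does not manufacture a positive twist by $E$.

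With your sheaf the degree count also fails to match the stated formula. By Equation \eqref{eq1} for $n=3$ we have $g_3'^*(3,3)=R+E$, hence $\O_{X_3'}(-R+E)\otimes g_3'^*(2,2)\simeq g_3'^*(-1,-1)\otimes\O_{X_3'}(2E)$, contributing $-\deg(C)+2(E\cdot C')$, whereas $\O_{X_3'}(-R)\otimes g_3'^*(2,2)\simeq g_3'^*(-1,-1)\otimes\O_{X_3'}(E)$ contributes $-\deg(C)+(E\cdot C')$; adding $\deg_{\tilde C}S^2\Omega^1_{\tilde C}=4g(C)-4$ then gives exactly the quantity in the statement only in the second case. (A sanity check: the downstream corollary $\deg(C)\leq 4g(C)+44$ for curves smooth at the $48$ nodes uses $E\cdot C'\leq 48$ with coefficient one; your twist would produce $4g(C)+92$.) So the ``$-R+E$'' in the statement should be read as ``$-R$''. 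Once that is corrected, the rest of your argument --- the projection-formula degree computation, the vanishing of $\varphi_C^{\bullet}\omega''$ when the degree is negative, and the identification of $g_3'^{\bullet}\omega$-integral curves --- is correct and is the approach the paper indicates by pointing to \cite[Cor.\ 6.43, Thm.\ 6.48]{GF15}.
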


This result is not enough to capture all the curves of geometric genus $\leq 1$ on $X_3$, but it allows us to give extra information about these curves, as the following corollaries show (cf.\ Proposition 1.11 and Corollary 1.13 in \cite{GF15}):


\begin{corollary}
Every curve of geometric genus zero or one on $X_3$ contains at least two of the $48$ singular points of $X_3$.
\end{corollary}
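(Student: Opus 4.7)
My plan is to argue by contradiction: if $C \subset X_3$ is an irreducible curve of geometric genus $\le 1$ meeting the singular locus in at most one point, then the preceding theorem forces $g_3'(C)$ to be $\omega$-integral, and a direct inspection of the tower then yields the desired contradiction.

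First I would set $d := g_3'^*(1,1) \cdot C'$, which coincides with the $\P^6$-degree of $C$ since $K_{X_3} = g_3^*(1,1)$ is the hyperplane class of $X_3 \subset \P^6$ (by adjunction on the complete intersection of four quadrics), and $e := E \cdot C'$. Because every singularity of $X_3$ is of type $A_1$, the integer $e$ equals the number of analytic branches of $C$ at the unique singular point $p \in C$ (and is zero if $C$ avoids the singular locus). Using the identity $R + E = g_3'^*(3,3)$ from \eqref{eq1}, the quantity in the preceding theorem can be rewritten as a function of $d$, $e$, and $g(C)$, and one must check that it is strictly negative under our hypotheses. The bound $e \le \mu_p(C) \le d-1$ for irreducible curves of degree $d \ge 2$, sharpened where needed by the Castelnuovo-style analysis in \cite[Proposition 1.11 and Corollary 1.13]{GF15}, suffices to do this.

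By the preceding theorem, $g_3'(C)$ is then an $\omega$-integral curve in $X_0 = \P^1 \times \P^1$, hence by Section \ref{s1} a horizontal or vertical branching fibre. So $C$ is an irreducible component of the preimage of a fibre through $g_3$, which by the analysis of Section \ref{s1} is one of the twelve elliptic curves listed at the start of this section. It remains to verify that each such preimage passes through at least two singular points of $X_3$. I would do this directly from the tower structure: a branching fibre, say $F_1$, meets the two fibres $G_1, G_2$ of the opposite ruling within $D_1$, producing two nodes of $D_1$ on $F_1$; each of these nodes lifts through the two subsequent double covers $f_2$ and $f_3$, which are unramified in a neighbourhood of these points, to four $A_1$ singularities of $X_3$ lying on the preimage of $F_1$, for a total of eight. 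Since eight exceeds one, this contradicts the standing assumption.

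The main obstacle is establishing the strict negativity of the quantity in the middle step: the coarse bound $e \le d-1$ is enough when $d \le 5$, but in the regime $g(C) = 0$ and $d \ge 6$ one could a priori imagine a rational curve with a $(d-1)$-fold singular point concentrated at a single singularity of $X_3$, and excluding this requires the finer multiplicity estimates of \cite{GF15}.
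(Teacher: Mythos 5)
Your overall strategy is the right one and matches the route the paper takes (via \cite[Proposition 1.11, Corollary 1.13]{GF15}): assume $C$ meets at most one singular point, show the quantity in the preceding theorem is negative so that $g_3'(C)$ is $\omega$-integral, observe that the only genus $\le 1$ curves mapping to fibres are the twelve elliptic curves, and note that each of these passes through eight of the $48$ nodes (your incidence count $48\times 2/12=8$ is correct). The gaps are in the middle step, where you bound $e := E\cdot C'$.

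First, the claim that ``$e$ equals the number of analytic branches of $C$ at $p$'' is false in general. At an $A_1$ point, a branch of $C$ may meet the exceptional $(-2)$-curve with multiplicity greater than one (a tangent branch), in which case $e$ strictly exceeds the branch count; and the chain $e\le\mu_p(C)\le d-1$ is not justified (the first inequality is not automatic, and the second is a plane-curve multiplicity bound whose meaning at a singular point of the ambient surface needs care). Second, your remark that ``the coarse bound $e\le d-1$ is enough when $d\le 5$, but in the regime $g(C)=0$ and $d\ge 6$ \ldots requires finer estimates'' is inconsistent with the formula displayed in the theorem. With that formula the relevant quantity is $-d+e+4g-4$, so $e\le d-1$ gives $\le 4g-5<0$ for every $d$ and every $g\le 1$: there is no residual regime. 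Your remark is consistent instead with reading the sheaf literally as $\mathcal{O}(-R+E)\otimes g_3'^*(2,2)\simeq g_3'^*(-1,-1)\otimes\mathcal{O}(2E)$, which yields $-d+2e+4g-4$; this contradicts the displayed right-hand side (so one of the two in the theorem is a typo, presumably $\mathcal{O}(-R+E)$ should be $\mathcal{O}(-R)$, the $m=2$ case of Lemma \ref{newomega}'s weaker variant), and you should have flagged the discrepancy rather than silently choosing one. Third, you can sidestep the ad hoc multiplicity estimate entirely: if $C$ passes through exactly one $A_1$ point, then $\pi^*C=C'+\tfrac{e}{2}E_p$ gives $C'^2=C^2-\tfrac{e^2}{2}$; the Hodge index theorem on $X_3$ (with $K_{X_3}^2=16$) gives $C^2\le d^2/16$; and adjunction on $X_3'$ gives $C'^2=2p_a(C')-2-d\ge 2g-2-d$. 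Combining, $e^2\le d^2/8+2d+4-4g$, which is strictly less than $(d+4-4g)^2$ for all $d\ge 1$ when $g=0$, and for all $d\ge 3$ when $g=1$ (automatic, since a geometric genus one curve has degree at least $3$). This yields $e<d+4-4g$, hence $-d+e+4g-4<0$, without invoking \cite{GF15}.
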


It was known by work of Freitag and Salvati Manni \cite{FS16} (from \cite{B13}) that a curve of geometric genus zero or one must contain at least one of the $48$ singular points of $X_3$.

\begin{corollary}
Let $C$ be an irreducible curve in $X_3$, smooth at the singularities of this surface ($C$ can have singularities outside of the $48$ singular points of $X_3$). Then
$$\mathrm{deg}(C)\leq  4g(C)+44.$$
\end{corollary}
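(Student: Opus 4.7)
The plan is to deduce the bound from the preceding theorem, splitting into two cases according to whether $g_3'(C')$ is $\omega$-integral.

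Suppose first that $g_3'(C')$ is not $\omega$-integral. Then the contrapositive of the preceding theorem gives
\[
\deg(C) \;\le\; E \cdot C' + 4g(C) - 4,
\]
so the task reduces to showing $E \cdot C' \le 48$. The $48$ singularities of $X_3$ are all of type $A_1$, so $E$ is a disjoint union of $48$ exceptional $(-2)$-curves $E_p$, one above each singular point $p$. The key local fact is that a curve smooth at an $A_1$ singularity $p\in (xy=z^2)\subset \C^3$ has tangent line contained in the quadric cone, and the direction of this tangent line determines a unique point of the exceptional $\P^1 \cong E_p$; hence the strict transform $C'$ meets $E_p$ transversally in exactly one point when $p\in C$, and not at all otherwise. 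Summing over the singular points of $X_3$ lying on $C$ yields $E\cdot C'\le 48$, and the bound $\deg(C)\le 4g(C)+44$ follows.

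It remains to treat the case where $g_3'(C')$ is $\omega$-integral, i.e.\ $g_3(C)$ is a fibre $F$ of $\P^1\times \P^1$. Then $C$ is an irreducible component of the divisor $g_3^*F$ on $X_3$. From Section \ref{s1} we have $K_{X_3}=g_3^*(1,1)$, and by adjunction for the complete intersection $X_3\subset \P^6$ of four quadrics the canonical class coincides with the hyperplane class $H=\O_{X_3}(1)$. The projection formula then gives
\[
(g_3^*F)\cdot H \;=\; (g_3^*F)\cdot g_3^*(1,1) \;=\; \deg(g_3)\cdot (F\cdot (1,1)) \;=\; 8,
\]
so $\deg(C)\le 8 \le 4g(C)+44$ trivially.

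The step I expect to be most delicate is the intersection count $E\cdot C'\le 48$, which rests on the local analysis of smooth curves through the $A_1$ cone singularity; once that is in place, the rest is a direct application of the preceding theorem and the projection formula.
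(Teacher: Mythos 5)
The paper does not spell out a proof of this corollary (it refers the reader to Proposition 1.11 and Corollary 1.13 of \cite{GF15} and notes that Kani obtained the bound for smooth curves); your proposal reconstructs the natural argument from the preceding theorem, and it is correct. The case split on whether $g_3(C)$ is $\omega$-integral, the reduction to the bound $E\cdot C'\le 48$, and the degree computation $g_3^*F\cdot H = 8$ for the fibre case are all right, including the identification $H = K_{X_3} = g_3'^*(1,1)$ via adjunction on the complete intersection of four quadrics.

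The one place where you move a bit fast is the transversality claim for $A_1$ points. You argue that the tangent line of $C$ at $p$ lies on the cone and hence picks out a point of $E_p$, and then assert ``hence $C'$ meets $E_p$ transversally.'' That ``hence'' hides the real content. A clean justification: for an $A_1$ singularity $\mathfrak m_p\cdot\O_{X_3'} = \O_{X_3'}(-E_p)$, so restricting to the strict transform gives $\O(-E_p)|_{C'} = (\sigma|_{C'})^*\mathfrak m_{C,p}\cdot\O_{C',q}$; since $C$ is smooth at $p$, $\sigma|_{C'}\colon C'\to C$ is a local isomorphism, so the right-hand side is the maximal ideal $\mathfrak m_{C',q}$, i.e.\ $E_p\cap C'$ is the reduced point $q$ and $E_p\cdot C' = 1$. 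Equivalently, one can note that if $E_p\cdot C'\ge 2$ at $q$ then $\sigma|_{C'}$ ramifies at $q$, forcing a cusp of $C$ at $p$, contradicting smoothness. With that filled in, your proof is complete and agrees with the approach the paper intends.
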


This was also obtained by Kani for smooth curves \cite{K14}, using different methods, and it improves a result of Freitag and Salvati Manni from \cite{FS16}.
In this work we want to improve these results, by using different global twisted differentials at the same time, in order to get better control on the exceptional divisors.

Recall the tower of cyclic covers of degree $m=2$
$$X_3\to X_2\to X_1\to \mathbb{P}^1\times \mathbb{P}^1.$$
In this case $R$ consists of the horizontal fibres at the points $$[1:1],\ [1:-1],\ [1:i],\ [1:-i],\ [1:0],\ [0:1],$$ and the vertical fibres at the same points. We will denote the horizontal fibre at $[a:b]$ by $h_{b/a}$ and the vertical fibre at $[a:b]$ by $v_{b/a}$, with the convention that $1/0=\infty$. 

The image of $x_4=0$ in $\mathbb{P}^1\times\mathbb{P}^1$ consists of $h_{1}\cup h_{-1}\cup v_{1}\cup v_{-1}$, the image of $x_5=0$ consists of $h_{i}\cup h_{-i}\cup v_i\cup v_{-i}$, and the image of $x_6=0$ consists of $h_0\cup h_\infty\cup v_0\cup v_\infty$.
Thus, each of these consists of two horizontal fibres and two vertical fibres, intersecting at $4$ points. Over each of the $12$ intersection points, there are $4$ out of the $48$ singular points of $X_3$, and every singular points of $X_3$ maps to one of these intersections.


The image of $x_0=0$ under $g_3$ is the curve $C_0=\{xz+yw=0\}.$ Similarly, the image of $x_1=0$ is $C_1=\{yw-xz=0\}$, the image of $x_2=0$ is $C_2=\{yz-xw=0 \}$, and the image of $x_3=0$ is $C_3=\{xw+yz=0\}$.

Consider the following global sections in $H^0(\mathbb{P}^1\times\mathbb{P}^1,(3,3)\otimes S^2\Omega^1_{\mathbb{P}^1\times\mathbb{P}^1})$: $$\omega_0=(xz-yw)y^2z^2dxdw \ \ \ \ \ \ \ \ \omega_1=(yw-xz)y^2z^2dxdw $$ $$\omega_2=(yz-xw)y^2z^2dxdw \ \ \ \ \ \ \ \omega_3=(xw+yz)y^2z^2dxdw.$$ 

For each $0\leq i\leq 3$, the $\omega_i$-integral curves consist of the horizontal fibres, the vertical fibres, and $C_i$.

Let $E_i$ be the sum of the exceptional divisors from the $24$ singular points of $X_3$ whose image belongs to $C_i$, and let $E_i'$ be the sum of the exceptional divisors from the $24$ singular points whose image does not belong to $C_i$. We have the following version of Lemma \ref{newomega} adapted to these new global sections.

\begin{lemma}
For each $0\leq i\leq 3$, there is $\omega_i''$ in $$H^0(X_3', \O_{X_3'} (-R-E_i) \otimes g_3'^*(3,3) \otimes S^2\Omega_{X_3'}^1)$$ whose image under the natural morphism $$H^0\big(X_3', \O_{X_3'} (-R-E_i) \otimes g_3'^*(3,3) \otimes S^2\Omega_{X_3'}^1\big) \to H^0\big(X_3', g_3'^*(3,3) \otimes S^2\Omega_{X_3'}^1\big)$$ is $g_3'^{\bullet} \omega_i$.
\label{newomega2}
\end{lemma}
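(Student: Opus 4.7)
The plan is to mimic the proof of Lemma \ref{newomega}, exploiting the extra bihomogeneous linear factor present in each $\omega_i$ to recover the vanishing along $E_i$ that fails for the bare $\omega$ in the $m=2$ case, as noted in the remark following Lemma \ref{newomega}. On $\P^1\times\P^1\simeq X_0$, factor $\omega_i=\ell_i\cdot\omega$, where $\omega=y^2z^2\,dx\,dw$ is the section of Section \ref{s1} and $\ell_i\in H^0(\P^1\times\P^1,(1,1))$ is the bihomogeneous linear form appearing as the remaining coefficient in the formula for $\omega_i$. Since the $\omega$-integral curves on $\P^1\times\P^1$ are exactly the horizontal and vertical fibres, and $C_i$ is $\omega_i$-integral of bidegree $(1,1)$, the zero locus of $\ell_i$ must equal $C_i$. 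Pulling back gives
\[
g_3'^{\bullet}\omega_i=g_3'^*(\ell_i)\otimes g_3'^{\bullet}\omega\in H^0\bigl(X_3',\,g_3'^*(3,3)\otimes S^2\Omega^1_{X_3'}\bigr),
\]
so the problem reduces to lifting each factor into a suitably twisted line bundle.

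For the factor $g_3'^{\bullet}\omega$, the only step in the proof of Lemma \ref{newomega} that actually uses $m>2$ is the vanishing of the pullback of $dx^{\otimes(r-k)}\otimes dy^{\otimes k}$ along the middle exceptional divisors. The vanishing of the pullbacks of $a_0\,dx^{\otimes r}$ and $a_r\,dy^{\otimes r}$ along $R$ uses only $a_0=y\,a_0'$ and $a_r=x\,a_r'$, consequences of the $\omega$-integrality of the branch fibres that are valid for any $m\geq 2$. This is the weaker version of Lemma \ref{newomega} cited just before the statement of Lemma \ref{newomega2}, producing a section
\[
\omega'\in H^0\bigl(X_3',\,\mathcal{O}_{X_3'}(-R)\otimes g_3'^*(2,2)\otimes S^2\Omega^1_{X_3'}\bigr)
\]
whose image under the natural morphism is $g_3'^{\bullet}\omega$.

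For the factor $g_3'^*(\ell_i)$, every exceptional curve of $\sigma_3$ is contracted by $g_3'$ to one of the twelve intersection nodes of the branch divisor of $g_3$, so $g_3'^*(\ell_i)$ vanishes identically on an exceptional curve exactly when that node lies on $C_i=V(\ell_i)$. By the bookkeeping preceding the lemma in Section \ref{s2}, six of the twelve branch-fibre intersection nodes lie on $C_i$, and each resolves to exactly four exceptional $\P^1$'s in $X_3'$; these $24$ curves comprise $E_i$ by definition. Since $\ell_i$ vanishes at each such node, the vanishing order of $g_3'^*(\ell_i)$ along every component of $E_i$ is at least one, so $g_3'^*(\ell_i)$ lifts to a section in $H^0(X_3',\mathcal{O}_{X_3'}(-E_i)\otimes g_3'^*(1,1))$.

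Tensoring the two lifts produces
\[
\omega_i'':=g_3'^*(\ell_i)\otimes\omega'\in H^0\bigl(X_3',\,\mathcal{O}_{X_3'}(-R-E_i)\otimes g_3'^*(3,3)\otimes S^2\Omega^1_{X_3'}\bigr),
\]
whose image in $H^0(X_3',g_3'^*(3,3)\otimes S^2\Omega^1_{X_3'})$ is $g_3'^{\bullet}(\ell_i\cdot\omega)=g_3'^{\bullet}\omega_i$, as required. The step I expect to be the main obstacle is the combinatorial verification that each $C_i$ really passes through precisely the six branch-divisor nodes attributed to $E_i$ in Section \ref{s2}, so that the zero divisor of $g_3'^*(\ell_i)$ contains all $24$ components of $E_i$ and no spurious exceptional curves contribute in the wrong way. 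This is a finite, explicit check on $\P^1\times\P^1$ with the four $(1,1)$-curves $C_0,\ldots,C_3$ and the twelve branch fibres, but it is where the specific geometry of the surface of cuboids enters the argument essentially.
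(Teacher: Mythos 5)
Your proof is correct and essentially follows the paper's approach. The paper carries out a local computation at each node $P\in C_i$, trivializing $g_3'^*(3,3)$ so that $\omega_i=(x-y)\bar\omega$ locally and observing directly that the factor $(x-y)$ pulls back to $u_1(u_0^2-u_2^2)$, which vanishes to order one along the exceptional $\P^1$; your global factorization $\omega_i=\ell_i\cdot\omega$ followed by tensoring the two lifts packages the same local vanishing in a slightly cleaner way, and the "combinatorial check" you flag at the end is really a tautology, since $E_i$ is by definition the sum of exceptional curves over singular points mapping into $C_i=V(\ell_i)$.
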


\begin{proof} Fix $0\leq i\leq 3$. We know that $g_3'^\bullet\omega_i$ vanishes along $R$. We will prove that $g_3'^{\bullet} \omega$ vanishes along the divisor $E_i$. 

We consider a node $P$ of the branch divisor of $g_k$ for some $0<k<3$, and such that $P\in g_k'(C_i)$. At the preimage $Q$ of $P$, we have a singularity of type $A_1$.
This singularity is locally analytically isomorphic to $(z^2-xy)\subset\C^3$ and $g(x,y,z)=(x,y)$. 

Let $E$ be the exceptional divisor at $Q$, let $u_1=0$ be the local coordinate defining it, and, as before, the local coordinates $u_0=0$, $u_2=0$ define the (reduced) preimages of $x=0$ and $y=0$ respectively. Then we have $$dx=2u_0u_1du_0+u_0^2du_1, \ \ \ \ \ \ dy=u_2^2du_1+2u_1u_2du_2.$$



Via a local trivialization of ${g'}_3^*(3,3)$, we can identify $\omega_0$ with a section of $S^2 \Omega_{\C^2}^1$ around $(0,0)$ as $\omega_0=(x-y)\bar{\omega}$ with $\bar{\omega}=a_0 dx^{\otimes 2} + a_1 dx dy + a_2 dy^{\otimes 2}$. Since ${g'}_3^*(x) =u_0^2u_1$, and ${g'}_3^*(y)=u_1u_2^2$, we obtain that the pull-back of $\omega_0$ vanishes along $E$ with order one. Doing this for every singular point contained in $C_0$, we obtain that ${g'}_3^{\bullet} \omega_0$ vanishes along the divisor $E_0$. A similar computation shows that for every $i$, the pull-back of $\omega_i$ vanishes along the divisor $E_i$.
\end{proof}


\begin{theorem}
Let $C\subset X_3'$ be a curve of geometric genus $0$, which is not an exceptional curve of $\sigma_3$, and it is not in the pull-back of the $C_i$'s. Then for every $0\leq i\leq 3$, we have $(C.E_i') \geq 4$.
\end{theorem}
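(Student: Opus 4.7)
The plan is to imitate the argument of Theorem \ref{gencub}, applying Lemma \ref{newomega2} with each of the four sections $\omega_i''$, $0 \leq i \leq 3$. Fix such an $i$ and let $\varphi_C \colon \tilde{C} \to X_3'$ be the normalization of $C$.

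First, I will invoke Equality \eqref{eq1}, which for $n = 3$ reads ${g_3'}^*(3,3) = R + E$, to rewrite the twisting line bundle as
\[
\O_{X_3'}(-R - E_i) \otimes {g_3'}^*(3,3) \simeq \O_{X_3'}(E - E_i) = \O_{X_3'}(E_i').
\]
Pulling $\omega_i''$ back to $\tilde{C}$ via $\varphi_C$ then produces a global section of the line bundle $\varphi_C^* \O_{X_3'}(E_i') \otimes S^2 \Omega_{\tilde{C}}^1$ on $\tilde{C} \simeq \P^1$, whose degree equals
\[
(C \cdot E_i') + 2(2g(C) - 2) \;=\; (C \cdot E_i') - 4,
\]
using $g(C) = 0$ and the projection formula.

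Second, I will argue by contradiction. Suppose $(C \cdot E_i') \leq 3$. Then the degree above is strictly negative, so the section must vanish, i.e.\ $\varphi_C^* \omega_i'' = 0$, meaning that $C$ is $\omega_i''$-integral. As in the closing lines of the proof of Theorem \ref{gencub} (via \cite[Proposition 3.88]{GF15}), this implies $C$ is also ${g_3'}^{\bullet} \omega_i$-integral, and then \cite[Theorem 3.35]{GF15} yields that either $C$ is an exceptional curve of $\sigma_3$ or $g_3'(C)$ is an $\omega_i$-integral curve on $X_0$. The first possibility is excluded by hypothesis.

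Third, I will use the classification stated just above Lemma \ref{newomega2}: the $\omega_i$-integral curves on $X_0 = \P^1 \times \P^1$ are exactly the horizontal fibres, the vertical fibres, and the curve $C_i$. The case $g_3'(C) = C_i$ is again excluded by hypothesis. If $g_3'(C)$ is a fibre $F$, then $C$ is an irreducible component of $g_3'^{-1}(F)$; but the Riemann--Hurwitz analysis of Section \ref{s1} (with $m = 2$, $n = 3$), together with the explicit description in this section of the $12$ elliptic curves pulled back from the fibres, shows that every non-exceptional component of $g_3'^{-1}(F)$ is an elliptic curve, contradicting $g(C) = 0$. This rules out every possibility, giving $(C \cdot E_i') \geq 4$. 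The main obstacle I expect is the last step: making rigorous the claim that no rational non-exceptional curve can sit as an irreducible component of $g_3'^{-1}(F)$. One has to chase the tower $X_3 \to X_2 \to X_1 \to X_0$ carefully and track how the strict transforms behave under $\sigma_3$ at the $A_1$ points lying over $F$, ensuring throughout that the non-exceptional part has geometric genus $1$.
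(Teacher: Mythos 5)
Your argument is correct and follows the paper's route exactly: you twist by the $\omega_i''$ of Lemma~\ref{newomega2}, reduce $\O_{X_3'}(-R-E_i)\otimes g_3'^*(3,3)$ to $\O_{X_3'}(E_i')$ via Equality~\eqref{eq1}, and show that $(C\cdot E_i')\le 3$ makes the pulled-back section on $\tilde C\simeq\P^1$ have negative degree, forcing $C$ to be $\omega_i''$-integral and hence to lie over one of the classified $\omega_i$-integral curves of $X_0$, each of which is then ruled out. One small inaccuracy to clean up in your closing step: for a \emph{non-branch} fibre $F$ the non-exceptional component of $g_3'^{-1}(F)$ is not an elliptic curve (a Riemann--Hurwitz count through the tower $X_3\to X_2\to X_1\to\P^1\times\P^1$ gives geometric genus $5$), but it still has positive genus, which is all that is needed to contradict $g(C)=0$, so the proof goes through. (You also silently correct a typo in the paper's displayed inequality, which should read $(C\cdot E_i')<4$ rather than $(C\cdot E_i)<4$.)
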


\begin{proof}
From Proposition 3.88 in \cite{GF15}, we have that the $\omega_i''$-integral curves in $X_3'$ are among the pull-back of the horizontal fibres, the pull-back of the vertical fibres and the curve $C_i$. 
Let $C\subset X_3'$ be a curve of geometric genus zero. We have 
$$\mathcal{O}_{X'_3}(-R-E_i)\otimes g_3'^*(3,3)=\mathcal{O}_{X_3'}(-R-E_i+R+E)=\mathcal{O}_{X_3'}(E_i'),$$
thus if for some $0\leq i\leq 3$ we have $(C.E_i)<4$, then we obtain
$$\deg_{\tilde{C}}(\varphi_C^*\mathcal{O}_{X'_3}(-R-E_i)\otimes\varphi_C^*g_3'^*(3,3)\otimes S^2\Omega^1_{X_3'})=(C.E_i')-4<0,$$
hence $C$ must be an $\omega_i''$-integral curve.
\end{proof}


\begin{corollary}
Let $C\subset X_3'$ be a curve of geometric genus $0$, which is not an exceptional curve of $\sigma_3$, and not in the pull-back of the $C_i$'s. Then $(C.E) \geq 8$.
\end{corollary}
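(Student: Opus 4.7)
The plan is to apply the previous theorem to each $i \in \{0,1,2,3\}$ and combine the four resulting inequalities via the identity $\sum_{i=0}^{3} E_i' = 2E$.

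To establish this identity, I would verify that each of the $12$ nodes in $\P^1 \times \P^1$ to which the singular points of $X_3$ map lies on exactly $2$ of the four curves $C_0, C_1, C_2, C_3$. Passing to the affine coordinates $s = y/x$, $t = z/w$, the four curves are cut out by $t+s=0$, $t-s=0$, $st-1=0$, $st+1=0$, and the $12$ nodes are the points with $(s,t) \in \{\pm 1\}^2$ (coming from $x_4=0$), with $(s,t) \in \{\pm i\}^2$ (from $x_5=0$), and with $(s,t) \in \{0,\infty\}^2$ (from $x_6=0$). For the eight finite nodes a direct substitution gives, for instance, $(1,1) \in C_1 \cap C_2$, $(1,-1), (-1,1) \in C_0 \cap C_3$, $(-1,-1) \in C_1 \cap C_2$, $(i,i), (-i,-i) \in C_1 \cap C_3$, $(i,-i),(-i,i) \in C_0 \cap C_2$; every such point lies on exactly two of the $C_i$. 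For the four nodes involving $0$ or $\infty$ one reverts to the homogeneous equations $xz+yw$, $yw-xz$, $yz-xw$, $xw+yz$ and finds $(0,0),(\infty,\infty) \in C_0 \cap C_1$ and $(0,\infty),(\infty,0) \in C_2 \cap C_3$.

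Since the $4$ singular points of $X_3$ over a given node $Q$ contribute their exceptional divisors to $E_i$ precisely when $Q \in C_i$, every one of the $48$ exceptional curves of $\sigma_3$ is a summand of exactly two of $E_0, E_1, E_2, E_3$, and consequently of exactly two of $E_0', E_1', E_2', E_3'$. This gives the identity $\sum_{i=0}^{3} E_i' = 2E$ in $\mathrm{Pic}(X_3')$.

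Finally, the previous theorem applied to each $i$ yields $(C \cdot E_i') \geq 4$, and summing over $i$,
\[
2\,(C \cdot E) \;=\; \sum_{i=0}^{3} (C \cdot E_i') \;\geq\; 16,
\]
which gives $(C \cdot E) \geq 8$. The only non-routine ingredient is the $12$-point verification for the $C_i$, an elementary finite check on four $(1,1)$-curves in $\P^1 \times \P^1$; once it is in place the corollary is a one-line consequence of the preceding theorem.
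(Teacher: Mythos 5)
Your proof is correct and follows exactly the paper's route: apply the preceding theorem to each $i$ to get $(C\cdot E_i')\geq 4$, then sum using the identity $\sum_{i=0}^3 E_i' = 2E$. The paper asserts this identity without justification, whereas you supply the (correct) $12$-point check that each node of the branch configuration lies on exactly two of the curves $C_0,\dots,C_3$, which is the underlying reason why every exceptional component appears in exactly two of the $E_i'$.
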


\begin{proof}
We know that $(C.E_i')\geq 4$ for each $0\leq i\leq 3$. Since $E_0'+E_1'+E_2'+E_3'=2E$, we obtain $(C.E)\geq 8$.
\end{proof}

\section{Low genus curves in cyclic covers} \label{s3}

\subsection{Local picture} \label{s31} We first recall the local picture of a cyclic cover, together with cyclic quotient singularities, and their minimal resolution.

Let $0<q<m$ be integers with gcd$(q,m)=1$. Consider the action of $\tau(x,y)=(\mu x, \mu^q y)$ on $\C^2$, where $\mu$ is a primitive $m$-th root of $1$. A \textit{cyclic quotient singularity} $\frac{1}{m}(1,q)$ is a germ at the origin of the quotient of $\C^2$ by $\langle \tau \rangle$; cf.\ \cite[III \S5]{BHPV04}. For us it will be useful the following toric description. Consider the inclusions of rings $$\C[x^m,y^m] \subset \C[x^m,y^m,x^{m-q}y] \subset \C[x,y]^{\langle \tau \rangle} \subset \C[x,y].$$ We note that $$\C[x^m,y^m,x^{m-q}y] \simeq \C[u,v,w]/(u v^{m-q}-w^m),$$ where $v=x^m$, $u=y^m$, and $w=x^{m-q}y$. The inclusions define morphisms between the corresponding spectrums of the rings, which translates into the maps $$\C^2 \xrightarrow{q} \C^2/\langle \tau \rangle \xrightarrow{\eta} (u v^{m-q}-w^m) \subset \C^3 \xrightarrow{r} \C^2$$ where $r(u,v,w)=(u,v)$ is the cyclic cover branch along $\{ u v^{m-q}=0 \}$ of degree $m$, $\eta$ is the normalization map, and $q$ is the quotient map. As in \cite[III \S5]{BHPV04}, around $(0,0) \in \C^2$ the local picture for any cyclic cover of degree $m$ is given by $\{ u^av^b=w^m \} \subset \C^3 \to \C^2$, $(u,v,w) \mapsto (u,v)$, where gcd$(a,m)$=gcd$(b,m)=1$, and $q$ is such that $aq+b \equiv 0$ modulo $m$.

Let $\sigma \colon \widetilde{Y} \rightarrow Y$ be the minimal resolution of $Y:=\frac{1}{m}(1,q)$. Figure \ref{exdiv} shows the exceptional curves $E_i=\P^1$ of $\sigma$, for $1 \leq i \leq s$, and the strict transforms $E_0$ and $E_{s+1}$ of $(y=0)$ and $(x=0)$ respectively.

\begin{figure}[htbp]
\includegraphics[width=11.5cm]{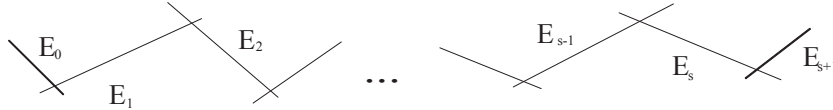}
\caption{Exceptional divisors over $\frac{1}{m}(1,q)$, $E_0$ and
$E_{s+1}$} \label{exdiv}
\end{figure}

The numbers $E_i^2=-b_i$ are computed using the {\em Hirzebruch-Jung continued fraction}
$$ \frac{m}{q} = b_1 - \frac{1}{b_2 - \frac{1}{\ddots - \frac{1}{b_s}}} =: [b_1, \ldots ,b_s].$$

The continued fraction $[b_1,\ldots,b_s]$ defines the sequence of integers $$ 0=\beta_{s+1} < 1=\beta_s < \ldots < q=\beta_1 < m= \beta_0 $$ where $\beta_{i+1}= b_{i}\beta_i - \beta_{i-1}$. In this way, $\frac{\beta_{i-1}}{\beta_{i}}=[b_i,\ldots,b_s]$. Partial fractions $\frac{\alpha_i}{\gamma_i} =[b_1,\ldots,b_{i-1}]$ are computed through the sequences $$ 0=\alpha_0 < 1=\alpha_1 < \ldots < q^{-1}=\alpha_s < m= \alpha_{s+1},$$ where $\alpha_{i+1}=b_i\alpha_{i} - \alpha_{i-1}$ ($q^{-1}$ is the integer such that $0<q^{-1}<m$ and $q q^{-1} \equiv 1 ($mod $m)$), and $\gamma_0=-1$, $\gamma_1=0$, $\gamma_{i+1}=b_i \gamma_i - \gamma_{i-1}$. We have $\alpha_{i+1}\gamma_i - \alpha_i \gamma_{i+1}=-1$, $\beta_i = q \alpha_i - m \gamma_i$, and $\frac{m}{q^{-1}}=[b_s,\ldots,b_1]$. These numbers appear in the pull-back formulas $$ g'^*\big((u=0)\big) = \sum_{i=0}^{s+1} \beta_i E_i, \ \ \ \ \ \ \ g'^*\big((v=0)\big)= \sum_{i=0}^{s+1} \alpha_i E_i,$$ where $g':=\sigma \circ \eta \circ r$, and $K_{\widetilde{Y}}
\equiv \sigma^*(K_Y) + \sum_{i=1}^s (-1 +\frac{\beta_i+\alpha_i}{m}) E_i$. The numbers $d_i:= -1 + \frac{\beta_i + \alpha_i}{m}$ are the \textit{discrepancies} of $E_i$. Let $u_i$ be a local coordinate defining $E_i$, so that at each node of $E_0,E_1,\ldots,E_s,E_{s+1}$ we have local coordinates $u_i, u_{i+1}$ for $\widetilde{Y}$. 
Then (see \cite{R03}) we have that locally $g'$ is given by $$g'(u_i,u_{i+1})=(u_i^{\beta_i}u_{i+1}^{\beta_{i+1}},u_i^{\alpha_i}u_{i+1}^{\alpha_{i+1}}).$$ Therefore we have the pull-back relations $$du=\beta_i u_i^{\beta_i-1}u_{i+1}^{\beta_{i+1}} du_{i}+\beta_{i+1} u_i^{\beta_{i}}u_{i+1}^{\beta_{i+1}-1} du_{i+1}$$ and $$dv=\alpha_i u_i^{\alpha_i-1}u_{i+1}^{\alpha_{i+1}} du_{i}+\alpha_{i+1} u_i^{\alpha_{i}}u_{i+1}^{\alpha_{i+1}-1} du_{i+1}.$$

\subsection{Global picture} \label{s32} The following is taken from \cite[Section 1]{U10}. Let $X$ be a smooth projective surface over $\C$, and let $\sum_{j=1}^d D_j$ be a simple normal crossings divisor in $X$, that is, the irreducible curves $D_j$ are all smooth, and the singularities of the divisor are at most nodes. Let us assume the existence of a line bundle $\M$ on $X$ such that $$ \O_X \big( a_1 D_1 + a_2 D_2 + \ldots +a_d D_d \big) \simeq \M^{\otimes m}$$ for some integers $0<a_j<m$ such that gcd$(a_j,m)=1$. With this data, one constructs a smooth projective surface $Y'$ which represents the ``$m$-th root of $D:= \sum_{j=1}^d a_j D_j$" as follows. Let $s \in H^0(X,\O_X(D))$ be a section whose zero locus is $D$. This section defines a structure of $\O_X$-algebra on $\bigoplus_{j=0}^{m-1} \M^{-j}$ by means of the induced injection $\M^{-m} \simeq \O_X(-D) \hookrightarrow \O_X$. Then we have the affine morphism $f_0 \colon Y_0 \to X$, where $Y_0:= \spec_X \Big( \bigoplus_{i=0}^{m-1} \M^{-i} \Big)$. The variety $Y_0$ might not be normal. To normalize it, we define the line bundles $$\M^{(i)} := \M^{i} \otimes \O_X \Big(-\sum_{j=1}^d \Big[\frac{a_j i}{m} \Big] D_j \Big) $$ on $X$ for $0\leq i <m$. Then $\eta \colon Y:= \spec_X \Big( \bigoplus_{i=0}^{m-1} \M^{-(i)} \Big) \to Y_0$ is the normalization of $Y_0$. Hence if $f \colon Y \to X$ is the composition of $\eta$ with $f_0$, then $f_* \O_Y = \bigoplus_{i=0}^{m-1} \M^{-(i)}$. We note that $Y$ may have only cyclic quotient singularities over the nodes of $\sum_{j=1}^d D_j$. More precisely, given a node in $D_i \cap D_j$, we have one singularity in $Y$ over that node (since gcd$(a_j,m)=1$ for all $j$), and it is of type $\frac{1}{m}(1,q)$ where $a_i q+a_j \equiv 0$ modulo $m$. Locally around that singularity, the map $f \colon Y \to X$ is isomorphic to the local picture described in Subsection \ref{s31}, i.e. it is $\C^2/\langle \tau \rangle \to \C^2$ where $\{u=0\}=D_i$ and $\{v=0\}=D_j$.

Let $\sigma \colon Y' \to Y$ be the minimal resolution of the singularities in $Y$. The surface $Y'$ is a smooth (irreducible) projective surface. Let $f' \colon Y' \to X$ be the composition of $\sigma$ with $f$. Then $f'_* \O_{Y'} = \bigoplus_{i=0}^{m-1} \M^{-(i)}$. Again, the local picture of $f'$ over a node of $D_i \cap D_j$ is as in Subsection \ref{s31}, and so $f$ is locally isomorphic to $\sigma \circ \eta \circ r$.

Now let us consider $\omega \in H^0(X, \L \otimes S^r \Omega_X^1)$ for some line bundle $\L$ on $X$, and some integer $r>0$.       

\begin{theorem}
Assume that $D_j$ is $\omega$-integral (Definition \ref{omega}) for all $j$. If $\sum_{j=1}^d D_j-m \L$ is ample and $a_j \not \equiv -a_{j'}$ modulo $m$ for all $j \neq j'$, then $Y$ can have curves curves of geometric genus $\leq 1$ only in the set of preimages of $\omega$-integral curves in $X$.

\label{cyclic}
\end{theorem}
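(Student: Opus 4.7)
The strategy mirrors Lemma \ref{newomega} and Theorem \ref{gencub}, generalized from $A_{m-1}$ singularities to the Hirzebruch--Jung data of a cyclic quotient $\frac{1}{m}(1,q)$ laid out in Subsection \ref{s31}. The plan is to produce a twisted section
\[
\omega'' \in H^0\bigl(Y',\, f'^*\L \otimes \O_{Y'}(-(m-1)R - E_{\mathrm{exc}}) \otimes S^r\Omega^1_{Y'}\bigr)
\]
whose image under the natural forget-twist map is $f'^{\bullet}\omega$, where $R$ is the reduced strict transform on $Y'$ of the ramification divisor and $E_{\mathrm{exc}} = \sum_{P,k} E_k^{(P)}$ is the total reduced exceptional divisor of $\sigma\colon Y' \to Y$. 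Then for any irreducible $C \subset Y$ of geometric genus at most one with $f(C) \neq D_j$ for every $j$ (the case $f(C) = D_j$ being trivial by $\omega$-integrality of $D_j$), pulling $\omega''$ back to the normalization $\widetilde{C'}$ of the strict transform $C' \subset Y'$ produces a section of a line bundle of negative degree, forcing $f(C)$ to be $\omega$-integral in $X$ as in \cite[Theorem 3.35]{GF15}.

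For the vanishing of $f'^{\bullet}\omega$ to order $m-1$ along $R$, the argument of Lemma \ref{newomega} carries over verbatim. For the vanishing along each $E_k^{(P)}$, I work locally at a node $P \in D_i \cap D_j$ with $D_i = \{u = 0\}$, $D_j = \{v = 0\}$. In a local trivialization of $\L$, write $\omega = \sum_{\ell = 0}^{r} a_\ell\, du^{\otimes(r-\ell)} \otimes dv^{\otimes \ell}$; $\omega$-integrality of $D_i, D_j$ forces $a_0 \in v \cdot \O$ and $a_r \in u \cdot \O$. The toric pullback formulas from Subsection \ref{s31} then yield that along $E_k^{(P)} = \{u_k = 0\}$ the order of $du^{\otimes(r-\ell)} \otimes dv^{\otimes \ell}$ is at least $(\beta_k - 1)(r - \ell) + (\alpha_k - 1)\ell$, with an extra $\alpha_k$ or $\beta_k$ at $\ell = 0$ or $\ell = r$ respectively. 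Under the hypothesis $a_i \not\equiv -a_j \pmod m$, i.e.\ $q \neq 1$ at every node, one has $\beta_1 = q \geq 2$ and $\alpha_{s(P)} = q^{-1} \geq 2$; combined with the Hirzebruch--Jung monotonicity ($\beta_k > 1$ for $k < s(P)$, $\alpha_k > 1$ for $k > 1$), this makes every such order at least $1$.

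For the degree estimate, let $C \subset Y$ have $g(\tilde C) \leq 1$ and $f(C) \neq D_j$ for any $j$. Its strict transform $C' \subset Y'$ is neither exceptional nor contained in $R$, and $g(\widetilde{C'}) = g(\tilde C) \leq 1$. Pulling $\omega''$ back along $\varphi_{C'}\colon \widetilde{C'} \to Y'$ lands in a sheaf of degree $D'' \cdot C' + r(2g-2) \leq D'' \cdot C'$, where $D'' := f'^*\L - (m-1)R - E_{\mathrm{exc}}$. Assembling the local formulas $g'^*(u=0) = \sum \beta_k E_k$, $g'^*(v=0) = \sum \alpha_k E_k$ over the two branches at each node gives the global identity $f'^*\bigl(\sum_j D_j\bigr) = mR + \Sigma$ with $\Sigma = \sum_{P,k} c_k^{(P)} E_k^{(P)}$ and $c_k^{(P)} := \beta_k^{(P)} + \alpha_k^{(P)}$. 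This lets me rewrite
\[
-D'' \;=\; (m-2)R \,+\, \Bigl(E_{\mathrm{exc}} - \tfrac{1}{m}\Sigma\Bigr) \,+\, \tfrac{1}{m}\, f'^*\Bigl(\textstyle\sum_j D_j - m\L\Bigr).
\]
Intersecting with $C'$: the last summand is strictly positive by ampleness of $\sum_j D_j - m\L$ and the projection formula (since $f'|_{C'}$ is non-constant); $(m-2)R \cdot C' \geq 0$ because $m \geq 2$ and $C' \not\subset R$; and $\bigl(E_{\mathrm{exc}} - \tfrac{1}{m}\Sigma\bigr) \cdot C' = \sum\bigl(1 - c_k^{(P)}/m\bigr)\, E_k^{(P)} \cdot C' \geq 0$ because $c_k^{(P)} \leq m$ (i.e.\ the discrepancies $-1 + c_k^{(P)}/m$ of the minimal resolution of a cyclic quotient are nonpositive) and $E_k^{(P)} \cdot C' \geq 0$ since $C'$ is not exceptional. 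Hence $D'' \cdot C' < 0$, forcing $\varphi_{C'}^* \omega'' = 0$ and so $\varphi_{C'}^* f'^{\bullet}\omega = 0$, which means $f(C)$ is $\omega$-integral.

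The main obstacle is the inequality $c_k^{(P)} = \alpha_k^{(P)} + \beta_k^{(P)} \leq m$ needed to make the middle piece of the above decomposition effective: this is the fact that the minimal resolution of a two-dimensional cyclic quotient singularity has nonpositive discrepancies (with equality only in the du~Val case), and must be derived either inductively from $\alpha_{k+1} = b_k \alpha_k - \alpha_{k-1}$, $\beta_{k+1} = b_k \beta_k - \beta_{k-1}$ with $b_k \geq 2$, or from the standard classification. A secondary care point is the handling of the extreme components $E_1^{(P)}$ and $E_{s(P)}^{(P)}$, where only one of $\beta_k,\alpha_k$ is forced above $1$ by $q \neq 1$; there the required vanishing on the boundary monomials $du^{\otimes r}$ and $dv^{\otimes r}$ must be tracked through the $\omega$-integrality factors $a_0 \in v\cdot \O$ and $a_r \in u \cdot \O$, which is precisely what breaks when $q = 1$ and explains the necessity of the hypothesis $a_i \not\equiv -a_j \pmod m$.
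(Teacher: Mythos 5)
Your proof is correct and takes essentially the same approach as the paper: you produce the twisted section $\omega''$ by the same local Hirzebruch--Jung computation (using $q\ne 1$ to get $\beta_1\ge 2$, $\alpha_s\ge 2$), and you bound $\deg\varphi_{C'}^*\omega''$ by the same decomposition of $-(m-1)R-E+f'^*\L$ into $(m-2)R$, the discrepancy term $\sum d_k E_k$, and $\tfrac1m f'^*(-\sum D_j+m\L)$ (your $E-\tfrac1m\Sigma$ equals $-\sum d_kE_k$). One small improvement over the paper: you correctly note that the discrepancies are only \emph{nonpositive}, not strictly negative -- for $A_{m-1}=\tfrac1m(1,m-1)$ singularities (which are allowed under the hypothesis $a_i\not\equiv -a_j$, e.g.\ when $a_i=a_j$ with $2a_i\not\equiv 0$) all $d_k=0$, whereas the paper asserts $-1<d_k<0$; this is harmless since $d_k\le 0$ suffices for the degree estimate.
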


\begin{proof}
The proof follows the strategy of Section \ref{s1}: Lemma \ref{newomega}, Theorem \ref{gencub}, and Corollary \ref{norateli}. As in Lemma \ref{newomega}, let us prove the existence of $\omega''$ in $H^0(Y', \O_{Y'}(-(m-1)R-E) \otimes f'^* \L \otimes S^r \Omega_{Y'}^1)$, where $R$ is the sum of the strict transforms of the $D_j$, and $E=\sum_k E_k$ is the sum of all exceptional curves of $\sigma$. This is a local computation, and so let $D_i=\{u=0\}$ and $D_j=\{v=0\}$ at a node of $D_i \cap D_j$ in $X$. We assume that the cyclic quotient singularity is $\frac{1}{m}(1,q)$ with continued fraction of length $s$. Following the proof of Lemma \ref{newomega}, we only need to check that the pull-back of $du^{\otimes r-k} \otimes dv^{\otimes k}$ by $f'$ vanishes on $E_l$, where $0<k<r$ and $0 \leq l \leq s+1$. According to the local computation in Subsection \ref{s31}, this happens if and only if $\alpha_l >1$ or $\beta_l >1$. So assume that $\alpha_l \leq 1$ and $\beta_l \leq 1$. If $\alpha_l=0$, then $l=0$ and so $\beta_l=\beta_0=m >1$ a contradiction. The same for $\beta_l=0$. If $\alpha_l =1$, then $l=1$ and $\beta_l=\beta_1=q \geq 1$. Hence $q=1$, but this singularity is $\frac{1}{m}(1,1)$ and so the multiplicities $a_i, a_j$ of $D_i, D_j$ respectively must satisfy $a_i+a_j \equiv 0$ modulo $m$. But this is contrary to our assumptions. Same for $\beta_l=1$. Therefore for any $l$ we have $\alpha_l>1$ or $\beta_l>1$.

On the other hand, we have the numerical equivalence  
$$f'^*\Big(\sum_{j=1}^d D_j \Big) \equiv mR + m \sum_k (1+d_k)E_k,$$ where $d_k$ is the discrepancy associated to $E_k$ (see the end of Subsection \ref{s31}). Hence we obtain $$\frac{1}{m} f'^*\Big(-\sum_{j=1}^d D_j \Big) - (m-2)R + \sum_k d_k E_k + f'^* \L \equiv -(m-1)R-E +f'^* \L.$$ We recall that $-1<d_k <0$ for all $k$. 

Let $\N:= -(m-1)R-E +f'^* \L$, let $C \subset Y'$ be a curve of geometric genus $g$, and not exceptional for $\sigma$. Let $\varphi_C \colon \tilde{C} \to Y'$ be the normalization of $C \subset Y'$. Then $$\deg_{\tilde{C}} \Big(\varphi_C^* \N \otimes S^r\Omega_{\tilde{C}}^1 \Big) \leq \frac{1}{m}\Big(-\sum_{j=1}^d D_j + m\L \Big) \cdot f'(C) + r(2g-2)$$ by the projection formula. By our hypothesis we have $$\Big(-\sum_{j=1}^d D_j + m\L \Big) \cdot f'(C) <0,$$ and so if $g\leq 1$, then $\deg_{\tilde{C}} \Big(\varphi_C^* \N \otimes S^r\Omega_{\tilde{C}}^1 \Big)<0$, and so the curve $C$ is $\omega''$-integral. As in Lemma \ref{newomega}, we conclude that $f'(C)$ must be an $\omega$-integral curve in $X$. Since $\sigma \colon Y' \to Y$ is a birational morphism contracting $E$, we obtain that $Y$ can have curves of geometric genus $\leq 1$ only in the set of preimages of $\omega$-integral curves in $X$.

\end{proof}

\begin{remark}
The assumption $a_i+a_j \not \equiv 0$ modulo $m$ is to avoid the situation of cyclic quotient singularities of type $\frac{1}{m}(1,1)$. As we saw in the proof and in Sections \ref{s1} and \ref{s2} for the $A_1$ rational double points, the singularities $\frac{1}{m}(1,1)$ do not work for the existence of $\omega''$.
\end{remark}

We finish this section with an explicit example, where $X=\P^2$ and $D_j$ are lines. Let us consider the lines $ L_{t,u} = (t^2x + tuy + u^2 z) \subset \P^2$ for $[t,u] \in \P^1$. They are precisely the tangent lines to the conic $ (y^2-4xz) \subset \P^2$. Let $\{L_1,\ldots,L_{d} \}$ be distinct lines such that $L_i=L_{t_i,u_i}$ for some $[t_i,u_i] \in \P^1$. Let us take positive integers $a_1,\ldots,a_d$ such that $\sum_{i=1}^d a_i =mR$ for some integers $m,R>0$. Assume that $a_i<m$ and gcd$(a_i,m)=1$ for all $i$, and that $a_i+a_j$ is not divisible by $m$ for all $i \neq j$.

\begin{corollary}
If $4m < d$, then the surface $$(t_1^2x+t_1 u_1 y+u_1^2z)^{a_1}\cdots(t_d^2x+t_d u_d y+u_d^2z)^{a_d}=w^m$$ in $\P(1,1,1,R)$  contains no curves of geometric genus $\leq 1$ apart from the $\P^1$'s defined by $t_i^2x+t_i u_i y+u_i^2z=0$. Its normalization is a simply connected normal projective surface with ample canonical class.
\label{plane} 
\end{corollary}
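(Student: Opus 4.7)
My plan is to deduce Corollary~\ref{plane} from Theorem~\ref{cyclic} applied to $X = \P^2$, $D_j = L_j$, and $\M = \O_{\P^2}(R)$. Since $\sum_j a_j = mR$, the relation $\O_{\P^2}(\sum_j a_j L_j) \simeq \M^{\otimes m}$ holds in $\text{Pic}(\P^2) = \Z$, so the construction of Section~\ref{s32} produces precisely the normalization $Y$ of the weighted hypersurface $\{w^m = \prod_j (t_j^2 x + t_j u_j y + u_j^2 z)^{a_j}\} \subset \P(1,1,1,R)$, with cyclic quotient singularities over each node $L_i \cap L_k$.

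The key geometric input is a twisted quadratic differential $\omega$ on $\P^2$ associated to the conic $(y^2 - 4xz)$. Concretely, on the chart $z = 1$, the envelope equation of the family $\{x + sy + s^2 = 0\}$ of affine tangent lines is $x(y')^2 - y y' + 1 = 0$, which I encode as $\omega = (dx)^2 - y\,dx\,dy + x\,(dy)^2$. A direct pole-order calculation in the other two affine charts shows that $\omega$ has pole of order at most $4$ along $\{z = 0\}$, so multiplying by $Z^4$ yields a well-defined global section $\omega \in H^0(\P^2, \O(4) \otimes S^2\Omega_{\P^2}^1)$, and no smaller twist $\O(k)$ suffices. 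A short null-cone argument (the discriminant of $\omega$ as a quadratic form in $(dx, dy)$ is exactly $y^2 - 4xz$) together with the classical envelope description identifies the $\omega$-integral curves as the tangent lines $L_{t,u}$ together with the conic itself. Each $L_j$ is therefore $\omega$-integral; no three $L_j$ concur (otherwise the dual points on the conic would be collinear, impossible for a smooth conic), so $\sum_j L_j$ is SNC; the ampleness requirement $\sum_j L_j - m\L$ ample reads $(d - 4m)H$ ample, which is the hypothesis; and $a_i + a_j \not\equiv 0 \pmod m$ is given. Hence Theorem~\ref{cyclic} forces every curve of geometric genus $\leq 1$ in $Y$ to lie in the preimage of the $\omega$-integral locus.

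I then catalogue those preimages. For a branch line $L_j$, $\gcd(a_j, m) = 1$ gives total ramification of index $m$, so $f^{-1}(L_j)$ is a single reduced curve isomorphic to $L_j \cong \P^1$; these are the $d$ rational curves in the statement. For any other tangent line $L$, the restriction $f^{-1}(L) \to L \cong \P^1$ is an irreducible cyclic cover of degree $m$ with full ramification at the $d$ transverse points $L \cap L_j$, and Riemann--Hurwitz yields geometric genus $1 - m + \tfrac{d(m-1)}{2} > 1$ under $d > 4m$. For the conic $C$, each $C \cap L_j$ is a tangency of multiplicity $2$, with local covering structure governed by $\gcd(m, 2 a_j) \in \{1, 2\}$ (the latter forcing $f^{-1}(C)$ to split into two degree-$m/2$ components when $m$ is even); an analogous Riemann--Hurwitz bound again shows each component has genus exceeding $1$. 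Thus the only curves of geometric genus $\leq 1$ in $Y$ are the $d$ copies of $\P^1$ above the branch lines.

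For ampleness of $K_Y$, the ramification formula $K_Y = f^*K_{\P^2} + (m-1)\sum_j \tilde L_j$ together with $\tilde L_j \equiv \tfrac{1}{m} f^*H$ as a $\Q$-divisor yields $K_Y \equiv \bigl(\tfrac{d(m-1)}{m} - 3\bigr) f^*H$, which is positive whenever $d > 3m/(m-1)$, in particular under $d > 4m$. I expect the main obstacle to be simple-connectedness: when all $a_j = 1$ the hypersurface is quasi-smooth and a weighted Lefschetz argument gives $\pi_1(Y) = \pi_1(\P(1,1,1,R)) = 1$ directly, but for general $a_j$ the equation fails to be quasi-smooth (the cone has non-isolated singularities along the lines $\{L_j = w = 0\}$), and one must instead argue via the monodromy $\rho \colon \pi_1(\P^2 \setminus \bigcup_j L_j) \to \Z/m$, $\gamma_j \mapsto a_j$: compute $\pi_1(Y^{\mathrm{sm}}) = \ker \rho / \langle m \gamma_j\rangle$ and then verify that the local $\Z/m$-fundamental groups attached to the cyclic quotient singularities of $Y$ (corresponding to loops around the nodes $L_i \cap L_k$) together kill this group in the passage from $Y^{\mathrm{sm}}$ to $Y$. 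Matching those local groups to global generators is the technical point that will require the most care.
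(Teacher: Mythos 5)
Your proposal follows the paper's route exactly: apply Theorem~\ref{cyclic} with $X=\P^2$, $D_j=L_j$, $\M=\O_{\P^2}(R)$, and the twisted quadratic differential $\omega = dx^2 - y\,dx\,dy + x\,dy^2 \in H^0(\P^2,\O(4)\otimes S^2\Omega^1_{\P^2})$ coming from the tangent pencil of $(y^2-4xz)$; verify that the $\omega$-integral locus is the tangent lines together with the conic; and then eliminate the preimages of the conic and of non-branch tangent lines by Riemann--Hurwitz, which your genus estimates do correctly (note $m=2$ is automatically excluded since $a_i+a_j\not\equiv 0\pmod m$). The only place where you leave a gap is the simple-connectedness of the normalization: you correctly observe that quasi-smoothness of the weighted hypersurface fails for general multiplicities $a_j$ and that one must instead compare $\ker(\rho)$ with the local $\Z/m$-fundamental groups at the cyclic quotient singularities, but you stop short of carrying that computation out. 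The paper closes precisely this point (and the ampleness of $K_Y$, which you compute directly and equivalently via the ramification formula) by citing \cite[Theorem 8.5]{U10} and \cite[Proposition 1.4]{U10}, which package the monodromy argument for $m$-th root covers branched along arrangements.
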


\begin{proof}
First we need to indicate $\L$, $r$ and $\omega$. By taking the differential of $t_1^2x+t_1 u_1 y+u_1^2z$ for $u=1$ and $z=1$, we obtain the differential $$\omega = {dx}^2-y dx dy +x {dy}^2$$ which is a global section of $H^0(\P^2,\O_{\P^2}(4) \otimes S^2\Omega_{\P^2}^1)$, and so $\L:=\O(4)$ and $r=2$. Since we are considering this particular $\omega$, we know that the lines $(t_1^2x+t_1 u_1 y+u_1^2z)$  are all $\omega$-integral. By essentially \cite[Theorem 3.76]{GF15}, these lines are all the $\omega$-integral curves together with the discriminant curve $(y^2-4xz)$.

For the cyclic cover, we are considering $D_j:=L_j$ for all $j$, and $$\O_{\P^2}\Big(\sum_{j=1}^d a_j D_j \Big) \simeq \O_{\P^2}(R)^{\otimes m},$$ and so $\M:=\O_{\P^2}(R)$. Note that the variety $Y_0$ can be considered as $$Y_0 = \{(t_1^2x+t_1 u_1 y+u_1^2z)^{a_1}\cdots(t_d^2x+t_d u_d y+u_d^2z)^{a_d}=w^m\} \subset \P(1,1,1,R).$$ We also have that $\sum_{j=1}^d D_j-m\L=\O(d-4m)$, and so it is ample by assumption. Then we can apply Theorem \ref{cyclic}, and we obtain that $Y$ has curves of geometric genus $\leq 1$ only in the set of preimages of $\omega$-integral curves in $\P^2$. A simple calculation with Riemann-Hurwitz says that the only preimages of $\omega$-integral curves which give a curve of geometric genus $0$ or $1$ are the ramification curves, with genus $0$ indeed. With the normalization map $\eta \colon Y \to Y_0$, we have no modifications on geometric genus of curves, so the same statement holds for $Y_0$. The claim on simply connectedness and ampleness of canonical class for $Y$ follows from \cite[Theorem 8.5]{U10} and the Canonical class formula \cite[Proposition 1.4]{U10}, which is generalized Riemann-Hurwitz.  

\end{proof}

\section{Low genus curves in towers of cyclic covers} \label{s4}

In this section we put all together to give the construction of a wide range of algebraic surfaces in which we can control curves of geometric genus $\leq 1$.  

\begin{theorem}
Let $X$ be a smooth projective surface, and let $\omega \in H^0(X, \L \otimes S^r\Omega_X^1)$. Assume we have the relations $m_i \M_i = \sum_{j=1}^{s_i} a_{i,j} D_{i,j}$ in Pic$(X)$, for some line bundles $\M_i$, with $0<a_{i,j}<m_i$ and gcd$(a_{i,j},m_i)=1$ for all $i,j$. Assume also that the divisor $\sum_{i=1}^n \sum_{j=1}^{s_i} D_{i,j}$ has simple normal crossings, and $D_{i,j}$ are $\omega$-integral curves.  Then a tower of $n$ cyclic covers of degree $m$ $$X_n \to X_{n-1} \to \ldots \to X_1 \to X_0:= X$$ is defined, where all $X_k$ are normal projective surfaces with only cyclic quotient singularities. 

If $$\sum_{i=1}^n \frac{1} {m_i} \Big( \sum_{j=1}^{s_i} D_{i,j}\Big) -\L \ \ \text{is} \ \Q\text{-ample, and } \ a_{i,j} \not \equiv -a_{i,j'} (mod \ m_i) \ \text{ for all} \ i, \ j\neq j',$$ then $X_n$ can have curves of geometric genus $\leq 1$ only in the set of preimages of $\omega$-integral curves in $X$. 

\label{general}
\end{theorem}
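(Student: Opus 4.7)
The plan is to iterate the single-cover argument of Theorem \ref{cyclic}, performed globally on one resolution of the top of the tower, so that every exceptional divisor created by the chain of covers is handled uniformly. First I would construct the tower inductively: given $g_i \colon X_i \to X$, define $f_{i+1} \colon X_{i+1} \to X_i$ as the normalized cyclic cover of degree $m_{i+1}$ associated to the pulled-back relation $g_i^{\ast}(m_{i+1}\M_{i+1}) \simeq g_i^{\ast}\big(\sum_j a_{i+1,j} D_{i+1,j}\big)$, following Subsection \ref{s32}. The simple normal crossings hypothesis on $\sum_{i,j} D_{i,j}$ in $X$ ensures that at each step the branch divisor on $X_i$ is locally a pair of transverse smooth curves on the smooth locus of $X_i$, so the new singularities of $X_{i+1}$ are cyclic quotient of type $\frac{1}{m_{i+1}}(1,q)$; pre-existing cyclic quotient singularities of $X_i$ are covered only by further cyclic quotient singularities of $X_{i+1}$, by the local toric picture of Subsection \ref{s31}.

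Next, let $\sigma_n \colon X_n' \to X_n$ be the minimal resolution, $g_n' = g_n \circ \sigma_n$, $R_i$ the strict transform of $\sum_j D_{i,j}$ in $X_n'$, $R = \sum_i R_i$, and $E$ the reduced exceptional divisor. Set $\N := \O_{X_n'}\big({-}\sum_i (m_i - 1) R_i - E\big) \otimes g_n'^{\ast}\L$. I would generalize Lemma \ref{newomega} to produce $\omega'' \in H^0(X_n', \N \otimes S^r \Omega^1_{X_n'})$ whose image in $H^0(X_n', g_n'^{\ast}\L \otimes S^r \Omega^1_{X_n'})$ is $g_n'^{\bullet}\omega$. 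Vanishing along each $R_i$ to order $m_i - 1$ is Riemann-Hurwitz for the $i$-th cover, as in \cite[Theorem 3.87]{GF15}. Vanishing along each exceptional curve $E_\ell$ of $\sigma_n$ comes from the local toric computation of Subsection \ref{s31}: at a $\frac{1}{m}(1,q)$ singularity with $q \neq 1$, one has $\alpha_\ell > 1$ or $\beta_\ell > 1$ for every $\ell$, so the pullback of each mixed monomial $du^{\otimes r-k}\otimes dv^{\otimes k}$ with $0 < k < r$ vanishes along $E_\ell$; the $k = 0$ and $k = r$ contributions vanish because the $D_{i,j}$ are $\omega$-integral. The forbidden $q = 1$ case is excluded precisely by the hypothesis $a_{i,j} \not\equiv -a_{i,j'} \pmod{m_i}$, as in the proof of Theorem \ref{cyclic}.

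Using the toric pullback formulas of Subsection \ref{s31} at each singularity of the tower, one obtains the numerical equivalence
$$-\sum_i (m_i - 1) R_i - E \ \equiv \ g_n'^{\ast}\!\Big({-}\sum_i \tfrac{1}{m_i}\sum_j D_{i,j}\Big) - \sum_i (m_i - 2) R_i + \sum_k d_k E_k,$$
where the $d_k \in (-1, 0]$ are the discrepancies of $\sigma_n$. For a curve $C \subset X_n'$ of geometric genus $g \le 1$ not contracted by $\sigma_n$, let $\varphi_C \colon \tilde C \to X_n'$ be its normalization. Since $(m_i - 2) R_i \cdot C \geq 0$ and $d_k E_k \cdot C \leq 0$, the projection formula gives
$$\deg_{\tilde C}\big(\varphi_C^{\ast}\N \otimes S^r \Omega^1_{\tilde C}\big) \ \le \ \Big(\L - \sum_i \tfrac{1}{m_i}\sum_j D_{i,j}\Big)\cdot g_n'(C) + r(2g - 2),$$
which is strictly negative by the $\Q$-ampleness hypothesis. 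Hence $C$ is $\omega''$-integral, and therefore $g_n'^{\bullet}\omega$-integral by \cite[Proposition 3.88]{GF15}; by \cite[Theorem 3.35]{GF15} either $C$ is exceptional for $\sigma_n$ (so its image in $X_n$ is a point) or $g_n'(C)$ is an $\omega$-integral curve in $X$.

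The main obstacle will be the bookkeeping of the vanishing step: one has to check that at every cyclic quotient singularity arising in the tower — and these may have many different types, possibly created iteratively by several of the $f_i$'s — the pullback $g_n'^{\bullet}\omega$ vanishes along the entire exceptional chain. This amounts to applying the toric computation of Subsection \ref{s31} at each such singularity, and relies crucially on the hypothesis $a_{i,j} \not\equiv -a_{i,j'} \pmod{m_i}$ to exclude the $\frac{1}{m_i}(1,1)$ case at every level, exactly as in the single-cover Theorem \ref{cyclic}.
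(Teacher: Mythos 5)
Your proposal is correct and follows essentially the same strategy as the paper's proof: build the tower via the pulled-back Pic relations as in Subsection \ref{s32}, construct $\omega''$ in $H^0(X_n', \O_{X_n'}(-\sum_i(m_i-1)R_i - E) \otimes g_n'^*\L \otimes S^r\Omega^1_{X_n'})$ by the same local toric computation that powers Theorem \ref{cyclic} (with the hypothesis $a_{i,j}\not\equiv -a_{i,j'}$ ruling out the $\frac{1}{m_i}(1,1)$ case), derive the same numerical equivalence for $\N$ via the discrepancy formula, and conclude by the projection formula and the $\Q$-ampleness. The minor points you add — making explicit that $(m_i-2)R_i\cdot C \ge 0$ and $d_k E_k\cdot C \le 0$, and the slightly loose phrasing that pre-existing singularities are ``covered by further cyclic quotient singularities'' (they are in fact replicated \'etale, since the successive branch loci avoid the existing singular points) — do not change the substance; note also that the discrepancies can equal $0$ (e.g.\ for $A_{m-1}$ singularities), so one should write $d_k\in(-1,0]$ as you do, not $d_k\in(-1,0)$ as in the paper, though the inequality used still holds.
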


\begin{proof}
The expressions $m_i \M_i = \sum_{j=1}^{s_i} a_{i,j} D_{i,j}$ in Pic$(X)$ define a tower of $n$ cyclic covers of degree $m_i$ 
$$X_n \to X_{n-1} \to \ldots \to X_1 \to X_0:= X$$ inductively as follows: Let $f_{k+1} \colon X_{k+1} \to X_{k}$ be the cyclic cover defined by $$ g_k^* \O_X \big(\sum_{j=1}^{s_{k+1}} a_{k+1,j} D_{k+1,j} \big) \simeq {g_k^*\M_i}^{\otimes m_i},$$ where $g_k:= f_1 \circ \cdots \circ f_k$ as done in Subsection \ref{s32}. We note that if $D'_{k+1,j}$ is the strict transform of $D_{k+1,j}$ under $g_k$, then $g_k^* \O_X \big(\sum_{j=1}^{s_{k+1}} a_{k+1,j} D_{k+1,j} \big)= \O_{X_k} \big( \sum_{j=1}^{s_{k+1}} a_{k+1,j} D'_{k+1,j} \big)$, and $\sum_{j=1}^{s_{k+1}} D'_{k+1,j}$ is a simple normal crossings divisor.
All $X_k$ are normal projective surfaces with cyclic quotient  singularities. Let $\sigma_k \colon X'_k \to X_k$ be the minimal resolution of all singularities in $X_k$. The surface $X'_k$ is a smooth (irreducible) projective surface. Let $g'_k \colon X'_k \to X$ be the composition of $\sigma_k$ with $g_k$.

The proof follows again the strategy of Section \ref{s1}: Lemma \ref{newomega}, Theorem \ref{gencub}, and Corollary \ref{norateli}. As in Theorem \ref{cyclic}, one proves the existence of $\omega''$ in $H^0(X'_n, \O_{X'_n}(-R-E) \otimes {g'}_n^* \L \otimes S^r \Omega_{X'_n}^1)$, where $R=\sum_{i=1}^n (m_i-1) R_i$ and $R_i$ is the sum of the strict transforms of the $\sum_{j=1}^{s_i} D_{i,j}$, and $E=\sum_k E_k$ is the sum of all exceptional curves of $\sigma_n$. For that it is key the hypothesis $a_{i,j} \not \equiv -a_{i,j'} ($mod $m_i)$ for all $i, j \neq j'$.

On the other hand, we have the numerical equivalence  
$${g'}_n^*\Big(\sum_{j=1}^{s_i} D_{i,j} \Big) \equiv m_i R_i  + m_i  \sum_{k_i} (1+d_{k_i})E_{k_i},$$ where the sum of exceptional curves runs over the singularities due to $\sum_{j=1}^{s_i} D_{i,j}$, and $d_{k_{i}}$ is the discrepancy associated to $E_{k_{i}}$ (see the end of Subsection \ref{s31}). 

Hence we obtain that $$ -{g'}_n^* \Big( \sum_{i=1}^n \frac{1} {m_i} \Big( \sum_{j=1}^{s_i} D_{i,j}\Big) \Big) - \sum_{i=1}^n (m_i-2) R_i + \sum_k d_k E_k + {g'}_n^*\L$$ is numerically equivalent to $\N:=-R-E +{g'}_n^* \L$. We recall that $-1<d_k <0$ for all $k$. 

Let $C \subset X'_n$ be a curve of geometric genus $g$, and not exceptional for $\sigma_n$. Let $\varphi_C \colon \tilde{C} \to X'_n$ be the normalization of $C \subset X'_n$. Then $$\deg_{\tilde{C}} \Big(\varphi_C^* \N \otimes S^r\Omega_{\tilde{C}}^1 \Big) \leq \Big( -\sum_{i=1}^n \frac{1} {m_i} \Big( \sum_{j=1}^{s_i} D_{i,j}\Big) + \L\Big) \cdot g'_n(C) + r(2g-2)$$ by the projection formula. By our hypothesis we have $$ \Big( - \sum_{i=1}^n \frac{1} {m_i} \Big( \sum_{j=1}^{s_i} D_{i,j}\Big) +\L \Big) \cdot g'_n(C) <0,$$ and so if $g\leq 1$, then $\deg_{\tilde{C}} \Big(\varphi_C^* \N \otimes S^r\Omega_{\tilde{C}}^1 \Big)<0$, and so the curve $C$ is $\omega''$-integral. As in Lemma \ref{newomega}, we conclude that $g'_n(C)$ must be an $\omega$-integral curve in $X$. Since $\sigma_n \colon X'_n \to X_n$ is a birational morphism contracting $E$, we obtain that $X_n$ can have curves of geometric genus $\leq 1$  only in the set of preimages of $\omega$-integral curves in $X$.
\end{proof}

\section{hyperbolicity} \label{s5}

By using certain result in Nevanlinna theory for complex varieties (cf.\ \cite{V00,V00b}), we will prove that if 
the normal projective surface $X_n$ in Theorem \ref{general} contains no curves of geometric genus $\leq 1$ and $\omega=0$ has only algebraic solutions, then in fact $X_n$ is hyperbolic, that is, the only holomorphic maps $f\colon \C \to X_n$ are constant. These in practice produce many families of smooth projective surfaces of general type which are hyperbolic. For example, at the end of this section we prove existence of hyperbolic complete intersection surfaces of low degrees.

We recall that an \emph{entire curve} in a variety $X$ is a nonconstant holomorphic map $\C \to X$. We begin with some definitions in Nevanlinna theory (cf.\ \cite[Section 11]{V11}). Let $X$ be a smooth projective surface, let $f\colon \C\to X$ be an entire curve, and let $D$ be a divisor on $X$ whose support does not contain the image of $f$. We define the \emph{counting function} of $D$ in $X$ to be $$N_f(D,R)=\sum_{0<|z|<R}\mathrm{ord}_zf^*D\cdot \mathrm{log}\Big( \frac{R}{|z|} \Big)+\mathrm{ord}_0f^*D\cdot\mathrm{log}(R).$$
Let $\lambda$ be a Weil function for $D$. We define the \emph{proximity function} for $f$ relative to $D$ to be $$m_f(D,R)=\int_0^{2\pi}\lambda(f(Re^{i\theta}))\frac{d\theta}{2\pi}.$$ It is defined up to $O(1)$.
We can now define the \emph{height} of $f$ relative to $D$ 
by
$$T_{D,f}(R)=m_f(D,R)+N_f(D,R).$$
The \emph{height} of $f$ relative to an invertible sheaf $\mathcal{L}$ on $X$ is $T_{\mathcal{L},f}(R)=T_{D,f}(R)+O(1)$, where $D$ is any divisor such that $\mathcal{L} \simeq \mathcal{O}_X(D)$.

The following result is \cite[Corollary 5.2]{V00b} for $D=0$ (see also \cite[Proposition 6.1.1]{V00}). We use the notation $\mathrm{log}^+(a)=\mathrm{max}\{0,\mathrm{log}(a)\}$.

\begin{theorem}\label{vhyp}
Let $X$ be a smooth complex projective variety, let $f \colon \C\to X$ be an entire curve, let $r$ be a positive integer, let $\mathcal{L}$ be a line bundle on $X$, let $\omega$ be a global section of $\mathcal{L}^\vee\otimes S^r\Omega^1_{X/\mathbb{C}}$, and let $\mathcal{A}$ be a line bundle which is big on the Zariski closure of $f(\C)$. If $f^* \omega \neq 0$, then $$T_{\mathcal{L},f}(R)\leq_{exc} O(\mathrm{log}^+T_{\mathcal{A},f}(R))+o\big(\mathrm{log} (R) \big),$$ where the notation $\leq_{exc}$ means that the inequality holds for all $R>0$ outside of a set of finite Lebesgue measure.
\label{vojtakey}
\end{theorem}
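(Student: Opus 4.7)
Since the statement is essentially \cite[Corollary 5.2]{V00b} (see also \cite[Proposition 6.1.1]{V00}), the plan is to derive it from the tautological inequality of Ahlfors--McQuillan--Vojta, applied on the projectivized tangent bundle of $X$. The derivation has three steps.

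First, I would transfer the form $\omega$ to the projectivized tangent bundle $\pi \colon P(T_X) \to X$. Under the canonical identification $S^r\Omega^1_X \cong \pi_*\mathcal{O}_{P(T_X)}(r)$, the section $\omega \in H^0(X,\mathcal{L}^\vee\otimes S^r\Omega^1_X)$ corresponds to a global section of $\mathcal{O}_{P(T_X)}(r)\otimes \pi^*\mathcal{L}^\vee$, whose zero locus $Z_\omega$ is an effective divisor numerically equivalent to $r\mathcal{H} - \pi^*D_{\mathcal{L}}$, where $\mathcal{H}$ is the tautological hyperplane class and $D_{\mathcal{L}}$ represents $\mathcal{L}$. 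The entire curve $f$ admits a canonical lift $\tilde{f} \colon \C \to P(T_X)$ (defined away from the isolated zeros of $df$), and the hypothesis $f^*\omega \neq 0$ is equivalent to $\tilde{f}(\C) \not\subset Z_\omega$.

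Second, I would apply the First Main Theorem to $Z_\omega$ and $\tilde{f}$, whose hypothesis is precisely the noninclusion just observed. Nonnegativity of the proximity and counting functions (up to $O(1)$) yields
\[
T_{\mathcal{L},f}(R) = T_{\pi^*\mathcal{L},\tilde{f}}(R) + O(1) \leq r\, T_{\mathcal{O}_{P(T_X)}(1),\tilde{f}}(R) + O(1),
\]
via the numerical equivalence above together with functoriality of heights through $\pi$. The tautological inequality for the canonical lift then gives
\[
T_{\mathcal{O}_{P(T_X)}(1),\tilde{f}}(R) \leq_{\mathrm{exc}} O(\mathrm{log}^+ T_{\mathcal{A},f}(R)) + o(\mathrm{log}\, R)
\]
for any $\mathcal{A}$ that is big on the Zariski closure of $f(\C)$, and combining the two inequalities produces the desired bound.

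The main obstacle is the tautological inequality itself: it is the key analytic input, and its proof requires the higher-jet refinement of the Borel--Nevanlinna lemma on the logarithmic derivative, together with careful control of the exceptional set of finite Lebesgue measure on which the bound might fail. All of this technical work is carried out in \cite{V00b} (and in related works of McQuillan), so in practice the proof in the paper consists of invoking Vojta's result directly rather than redoing the underlying Nevanlinna-theoretic estimates.
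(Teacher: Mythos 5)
Your proposal is correct, and it matches the paper's treatment: the paper offers no independent proof of Theorem~\ref{vhyp} but simply cites \cite[Corollary 5.2]{V00b} (for $D=0$), exactly as you anticipate in your closing remark. Your sketch of the underlying mechanism --- lifting $f$ to $P(T_X)$, reading $\omega$ as a section of $\mathcal{O}_{P(T_X)}(r)\otimes\pi^*\mathcal{L}^\vee$, applying the First Main Theorem to the divisor $Z_\omega$, and then invoking the tautological (McQuillan--Vojta) inequality --- is an accurate account of how Vojta's cited result is actually obtained.
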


The following will be the main tool to prove hyperbolicity for surfaces.

\begin{corollary}
Let $X$ be a smooth projective surface, let $\mathcal{L}$ be a big line bundle on $X$, let $r>0$ be an integer, and let $\omega \in H^0(X,\mathcal{L}^{\vee} \otimes S^r\Omega^1_{X})$. Assume that $f \colon \C \to X$ is an entire curve whose image is Zariski dense. Then $f^* \omega=0$.
\label{hyperbolickey}
\end{corollary}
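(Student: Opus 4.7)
The plan is to argue by contradiction using Theorem \ref{vojtakey}. Suppose $f^*\omega \neq 0$. Since the Zariski closure of $f(\C)$ equals $X$, the big line bundle $\mathcal{L}$ itself qualifies as $\mathcal{A}$ in the hypothesis of Theorem \ref{vojtakey}. Applying that theorem with $\mathcal{A} := \mathcal{L}$ yields
\[
T_{\mathcal{L},f}(R) \leq_{exc} O\!\left(\log^+ T_{\mathcal{L},f}(R)\right) + o(\log R),
\]
and the remaining task is to contradict this using the two standing hypotheses on $f$ and $\mathcal{L}$.

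To produce a lower bound on $T_{\mathcal{L},f}$ I would use bigness in the standard way: choose $N>0$ with $\mathcal{L}^{\otimes N} \simeq \mathcal{O}_X(A+E)$, where $A$ is ample and $E$ is effective. Since $f(\C)$ is Zariski dense in $X$, the image of $f$ is not contained in the support of $E$, so by the usual positivity of proximity and counting functions for effective divisors one has $T_{E,f}(R) \geq -O(1)$, and additivity of heights gives
\[
T_{\mathcal{L},f}(R) \;\geq\; \tfrac{1}{N}\,T_{A,f}(R) - O(1).
\]
I would next invoke the classical Nevanlinna-theoretic fact that for an ample line bundle $A$ and an entire curve $f\colon \C \to X$ which is \emph{not algebraic} (i.e.\ does not extend to a morphism $\mathbb{P}^1 \to X$), one has $T_{A,f}(R)/\log R \to \infty$ as $R\to\infty$. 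Zariski density of $f$ in the surface $X$ forces $f$ to be non-algebraic, for otherwise $f(\C)$ would sit inside a rational curve, hence inside a proper Zariski-closed subset of $X$.

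Combining the two estimates, $T_{\mathcal{L},f}(R)$ grows strictly faster than $o(\log R)$, hence tends to infinity, and in particular dominates $O(\log^+ T_{\mathcal{L},f}(R)) = O(\log T_{\mathcal{L},f}(R))$. This directly contradicts Vojta's inequality above (the exceptional set of finite Lebesgue measure cannot absorb an asymptotic growth of order $\log R$), so we must have had $f^*\omega = 0$. The main obstacle I anticipate is not the algebraic bookkeeping but the clean statement and justification of the Nevanlinna input $T_{A,f}(R)/\log R \to \infty$ for Zariski dense entire curves on a surface, together with careful handling of the ``$\leq_{exc}$'' clause so that the exceptional set does not spuriously rescue the inequality.
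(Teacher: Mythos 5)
Your proposal is correct and follows essentially the same route as the paper's proof: apply Theorem \ref{vhyp} with $\mathcal{A}=\mathcal{L}$, lower-bound $T_{\mathcal{L},f}(R)$ by a positive multiple of $\log R$, and derive a contradiction. The only difference is in how the lower bound is justified: the paper cites \cite[Prop.\ 11.11]{V11} to compare $T_{\mathcal{O}(1),f}$ with $T_{\mathcal{L},f}$ and then uses the elementary fact that $T_{\mathcal{O}(1),f}(R)\geq M\log R+N$ for \emph{any} nonconstant entire curve, whereas you unwind bigness explicitly via $\mathcal{L}^{\otimes N}\simeq\mathcal{O}_X(A+E)$ and then invoke the stronger (transcendence-based) statement $T_{A,f}(R)/\log R\to\infty$, after ruling out algebraicity of $f$ via Zariski density. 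Your route needs one extra observation (non-algebraicity of a Zariski dense entire curve on a surface) which the paper avoids by using only the universal logarithmic lower bound, but both arguments are correct and the content is the same.
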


\begin{proof}
By \cite[Prop. 11.11]{V11}, we have that $T_{\O(1),f}(R) \leq C T_{\L,f}(R) + O(1)$ for all $R >0$, and a constant $C>0$ depending on $\O(1)$ and $\L$. Here $\O(1)$ is the hyperplane line bundle given by some embedding of $X$ in a projective space. On the other hand, we have that there are constants $M>0$ and $N$ such that $M \log(R) + N \leq T_{\O(1),f}(R)$ for all $R>0$, and so there are constants $M'>0$ and $N'$ such that $M' \log(R) +N \leq T_{\L,f}(R)$. 

Let $f^* \omega \neq 0$. From Theorem \ref{vojtakey} we have that $$T_{\L,f}(R) \leq_{exc} S \log(T_{\L,f}(R)) + \epsilon \log(R)$$ for a constant $S>0$, a given $0<\epsilon< M'/4$, and $R >>0$ out of a set of finite Lebesgue measure. But $ \log(T_{\L,f}(R)) < T_{\L,f}(R)/2S$ for $r>>0$, and so by the inequality above we get $T_{\L,f}(R) <_{exc} 2\epsilon \log(R) < M'/2 \log (R)$, for certain $R>>0$. But this contradicts $M' \log(R) +N \leq T_{\L,f}(R)$. Therefore $f^* \omega =0$.  
\end{proof}

The following theorem gives a criteria for hyperbolicity of the singular surfaces $X_n$ in Theorem \ref{general}.

\begin{theorem}
Let us consider the hypothesis and the notation in Theorem \ref{general}. In addition, assume that all solutions to the differential equation given by $\omega=0$ on $X$ are $\omega$-integral curves. Then an entire curve in $X_n$ must be contained in the set of preimages of $\omega$-integral curves in $X$.

In particular, if the set of preimages of $\omega$-integral curves in $X$ does not contain curves of geometric genus $0$ or $1$, then $X_n$ is hyperbolic.
\label{hyperbolic}
\end{theorem}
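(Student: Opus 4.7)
The plan is to run the strategy of Theorem \ref{general} together with Corollary \ref{hyperbolickey}, applied to an entire curve in $X_n$ lifted to its resolution $X_n'$. Given a nonconstant holomorphic map $f\colon \C\to X_n$, I would first produce a lift $\tilde{f}\colon \C\to X_n'$: the preimage under $f$ of the finite singular set of $X_n$ is discrete in $\C$, on its complement $f$ lifts uniquely since $\sigma_n$ is an isomorphism there, and the lift extends across the discrete set by properness of $\sigma_n$ and projectivity of $X_n'$. Let $Z$ denote the Zariski closure of $\tilde{f}(\C)$ in $X_n'$.

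The algebraic key point is that the line bundle $\mathcal{M}:=\O_{X_n'}(R+E)\otimes {g'_n}^*\L^\vee$ is big on $X_n'$. Indeed, negating the numerical equivalence established in the proof of Theorem \ref{general} yields
\[
\mathcal{M}\equiv {g'_n}^*\Big(\sum_{i=1}^n\tfrac{1}{m_i}\sum_{j=1}^{s_i}D_{i,j}-\L\Big)+\sum_{i=1}^n(m_i-2)R_i-\sum_k d_k E_k,
\]
in which the first summand is the pull-back by the surjection $g'_n$ of a $\Q$-ample class (hence big and nef), the second is effective since $m_i\geq 2$, and the third is effective since every discrepancy $d_k$ is negative.

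If $Z=X_n'$, then Corollary \ref{hyperbolickey} applied to $\omega''\in H^0(X_n',\mathcal{M}^\vee\otimes S^r\Omega^1_{X_n'})$ forces $\tilde{f}^*\omega''=0$, and since $\omega''$ maps to ${g'_n}^\bullet\omega$, this yields $(g'_n\circ\tilde{f})^*\omega=0$. By the standing hypothesis the image of $g'_n\circ\tilde{f}$ then lies inside an $\omega$-integral curve in $X$, contradicting Zariski density. Hence $Z$ is an irreducible curve $C'\subset X_n'$, which cannot be $\sigma_n$-exceptional (otherwise $f$ would be constant); since $\tilde{f}$ lifts nonconstantly to the normalization $\widetilde{C'}$, the geometric genus of $C'$ is at most $1$. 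The degree computation from the proof of Theorem \ref{general} then gives $\deg_{\widetilde{C'}}(\varphi_{C'}^*\mathcal{M}^\vee\otimes S^r\Omega^1_{\widetilde{C'}})<0$, forcing $\varphi_{C'}^*\omega''=0$ and hence $\tilde{f}^*\omega''=0$; as in Theorem \ref{general}, one then concludes that $g'_n(C')$ is an $\omega$-integral curve in $X$, whence $f(\C)$ lies in $g_n^{-1}$ of this $\omega$-integral curve. The ``in particular'' statement is immediate: the hypothesis that no preimage of an $\omega$-integral curve contains a curve of geometric genus $\leq 1$ rules out the existence of such a $C'$, so no nonconstant holomorphic map $\C\to X_n$ exists.

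The main technical obstacles I anticipate are the verification of bigness of $\mathcal{M}$ (a careful unpacking of the numerical equivalence buried in the proof of Theorem \ref{general}, with attention to signs) and the holomorphic lift of $f$ across the cyclic quotient singularities of $X_n$; once these are in place, Corollary \ref{hyperbolickey} together with the $\omega''$-integrality mechanism already developed completes the argument.
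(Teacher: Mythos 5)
Your proposal is correct and follows essentially the same approach as the paper: lift the entire curve to the resolution $X_n'$, observe that $\N^{\vee}=\O_{X_n'}(R+E)\otimes{g'_n}^*\L^{\vee}$ is big (pull-back of ample plus effective), apply Corollary \ref{hyperbolickey} to $\omega''$ to get $\tilde f^*\omega''=0$, deduce an algebraic solution of $\omega=0$, contradict Zariski density, and then handle the non-dense case via genus $\leq 1$ and Theorem \ref{general}. The only cosmetic differences are that you spell out the holomorphic lift across the singular locus (which the paper takes for granted) and re-run the degree computation of Theorem \ref{general} in the non-dense case, where the paper simply invokes Liouville's theorem together with Theorem \ref{general} as a black box.
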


\begin{proof}
Let $\sigma_n \colon X'_n \to X_n$ be the minimal resolution of singularities of $X_n$. Consider an entire curve $f \colon \C \to X_n$. Then it has a lifting $f' \colon \C \to X'_n$. Assume that $f'(\C)$ is Zariski dense in $X'_n$. We have a section $$\omega'' \in H^0(X'_n, \O_{X'_n}(-R-E) \otimes {g'}_n^* \L \otimes S^r\Omega_{X'_n}^1),$$ and a line bundle $\N:= \O_{X'_n}(-R-E) \otimes {g'}_n^* \L$ which is numerically $$\N^{\vee} \equiv {g'}_n^* \Big(-\L+  \sum_{i=1}^n \frac{1} {m_i} \Big( \sum_{j=1}^{s_i} D_{i,j}\Big) \Big) + \sum_{i=1}^n (m_i-2) R_i - \sum_k d_k E_k,$$ where $-1 < d_k <0$ are the discrepancies of $E_k$, and $-\L+  \sum_{i=1}^n \frac{1} {m_i} \Big( \sum_{j=1}^{s_i} D_{i,j}\Big)$ is an ample divisor in $X$ by hypothesis. Therefore $\N^{\vee}$ is numerically the sum of the pull-back of an ample divisor plus an effective divisor. It is easy to see that $\N^{\vee}$ is then big by e.g.\ \cite[Corollary 2.2.7]{Laz}. Therefore by Corollary \ref{hyperbolickey} we have that $f^* \omega''=0$. But then, locally analytical $f$ satisfies the differential equation given by ${g'}_n^* \omega$, and then this gives a solution to the differential equation given by $\omega$. By hypothesis we know that all solutions are given by $\omega$-integral curves in $X$, and so this contradicts the Zariski density of $f(\C)$.

Therefore $f(\C)$ must be contained in an irreducible algebraic curve. But by Liouville's theorem, the geometric genus of this irreducible curve must be less than or equal to $1$. Moreover, by Theorem \ref{general}, all curves of geometric genus $\leq 1$ are in the set of preimages of $\omega$-integral curves in $X$.   
\end{proof}

The following corollaries give a proof of Theorem \ref{jdjhds}.  

\begin{corollary}
Let $n \geq 3$, and let $m_i \geq 3$ be $n$ integers. Let $\{a_{i,j}\}$ and $\{b_{i,j}\}$ be two collections of distinct $\sum_{i=1}^n m_i$ complex numbers. Let $\{ G_i=G_i(z_0,z_1,z_2,z_3) \}_{i=1}^n$ be a collection of $n$ homogeneous polynomials of degree $m_i$, such that $z_0-a_{i,j}z_1-b_{i,j}z_2+ a_{i,j} b_{i,j} z_3$ does not divide $G_i$. Then the complete intersection $$ \prod_{j=1}^{m_i}(z_0-a_{i,j} z_1-b_{i,j} z_2+a_{i,j} b_{i,j} z_3) + t_i G_i =z_{3+i}^{m_i},  \ \ z_0z_3-z_1z_2=0 $$ for $i=1,\ldots,n$ in $\P^{n+3}$ is hyperbolic for sufficiently small $t_i \in \C$.
\label{CompleteIntersectionsD}
\end{corollary}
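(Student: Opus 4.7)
The plan is to recognize the $t_1=\cdots=t_n=0$ member of the family as the singular surface $X_n$ from Section \ref{s1} (with possibly varying degrees $m_i$), apply Theorem \ref{hyperbolic} to show $X_n$ is hyperbolic, and then invoke the openness of Brody hyperbolicity in proper families of compact complex spaces to transfer hyperbolicity to a Euclidean neighborhood of $0\in\C^n$.

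First, I will identify the central fiber. The Segre parametrization $\P^1\times\P^1\to\{z_0z_3=z_1z_2\}$ sending $([x{:}y],[u{:}v])$ to $[xu{:}xv{:}yu{:}yv]$ pulls back the linear form $z_0-a_{i,j}z_1-b_{i,j}z_2+a_{i,j}b_{i,j}z_3$ to the reducible divisor $(x-b_{i,j}y)(u-a_{i,j}v)$, i.e.\ one horizontal fiber plus one vertical fiber of $\P^1\times\P^1$. Since the $a_{i,j}$ are distinct and the $b_{i,j}$ are distinct, at $t=0$ the complete intersection is precisely the iterated tower of cyclic covers $X_n\to X_{n-1}\to\cdots\to\P^1\times\P^1$ constructed in Section \ref{s1} (with possibly varying degrees $m_i$ in place of a uniform $m$).

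Second, I will apply Theorem \ref{hyperbolic} with $X=\P^1\times\P^1$, $\L=(2,2)$, $r=2$, and $\omega=z_3^2\,dz_1\,dz_2$. The $D_{i,j}$ are the $2m_i$ horizontal and vertical fibers appearing in the $i$-th cyclic cover, all with multiplicity $a_{i,j}=1$. By \cite[6.6]{GF15} the horizontal and vertical fibers are exactly the $\omega$-integral curves on $\P^1\times\P^1$, and they coincide with the solutions of the differential equation $\omega=0$. Since $m_i\geq 3$ we have $a_{i,j}+a_{i,j'}=2\not\equiv 0\pmod{m_i}$, and $\sum_i\frac{1}{m_i}\sum_j D_{i,j}-\L$ has bidegree $(n-2,n-2)$, which is ample because $n\geq 3$. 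A Riemann--Hurwitz calculation, of the same shape as in Section \ref{s1}, shows that with $n\geq 3$ and each $m_i\geq 3$ the preimage in $X_n$ of any horizontal or vertical fiber has geometric genus $\geq 2$. Thus the preimages of $\omega$-integral curves contain no curves of geometric genus $\leq 1$, and Theorem \ref{hyperbolic} gives that $X_n$ is hyperbolic.

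Finally, the equations displayed in the statement define a proper flat family of compact complex analytic subspaces of $\P^{n+3}$ parametrized by $(t_1,\dots,t_n)\in\C^n$, with central fiber the hyperbolic $X_n$. By the openness of Brody hyperbolicity in proper families of compact complex spaces (cf.\ Kobayashi \cite{Kob}), all fibers over a Euclidean neighborhood of $0$ are hyperbolic, which is the conclusion. The main obstacle I anticipate is supplying a reference to Brody's openness theorem that explicitly allows a singular central fiber; if needed, one argues directly by Brody reparametrization inside the fixed ambient $\P^{n+3}$: a hypothetical sequence of non-constant entire curves $f_k\colon\C\to X_{t^{(k)}}$ with $t^{(k)}\to 0$ would, after reparametrization and Arzel\`a--Ascoli, subconverge to a non-constant entire curve into the compact limit $X_n$, contradicting Step~2.
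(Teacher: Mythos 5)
Your proof is correct and follows essentially the same route as the paper: evaluate at $t_1=\cdots=t_n=0$ to recover the generalized surface of cuboids $X_n$ (with non-uniform degrees $m_i$), verify the hypotheses of Theorem \ref{hyperbolic} to conclude that $X_n$ is hyperbolic, and then invoke Kobayashi's openness of hyperbolicity in proper holomorphic families (\cite[p.148 Theorem (3.11.1)]{Kob}) to deform to the smooth nearby fibers. You supply more of the bookkeeping than the paper does (the Segre factorization, the numerical verification of $\Q$-ampleness, the non-congruence condition), and you correctly anticipate that the cited theorem of Kobayashi applies to families of complex spaces with a possibly singular central fiber, so the Brody-reparametrization fallback you sketch is not actually needed.
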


\begin{proof}
Let us evaluate the complete intersection $$ \prod_{j=1}^{m_i}(z_0-a_{i,j} z_1-b_{i,j} z_2+a_{i,j} b_{i,j} z_3) + t_i G_i =z_{3+i}^{m_i},  \ \ z_0z_3-z_1z_2=0 $$ for $i=1,\ldots,n$ in $\P^{n+3}$ in $t_i=0$ for all $i$. We denote this surface by $X_n$, which is as in the construction of the generalized surfaces of cuboids but for not necessarily equal degrees $m_i>2$ (see Section \ref{s1}). As in Corollary \ref{norateli}, this surface $X_n$ has no curves of geometric genus $\leq 1$. In fact, the construction satisfies the hypothesis in Theorem \ref{hyperbolic}, and so $X_n$ has no entire curves. We now consider an small deformation of the branched divisor. It gives a smooth branch divisor, and so a smooth surface $X_n(t_1,\ldots,t_n)$. At the same time, this gives a small perturbation of the hyperbolic surface $X_n$, and it is known that hyperbolicity is preserved by small deformations (see \cite[p.148 Theorem (3.11.1)]{Kob}). This is a proof of part (d) of Theorem \ref{jdjhds}. 
\end{proof}

Next corollary is proved as the previous one, but we need to take care of the degrees equal to $2$, which produce $A_1$ singularities. 

\begin{corollary}
Let $r \geq 1$, $s\geq 1$ be integers such that $r+s \geq 5$, and let $\{m_i \geq 3\}_{i=r+1}^{r+s}$ be integers. Let $\{b_{i,j}, c_{i,j} \}_{i=r+1,\ldots,r+s}^{j=1,\ldots,m_i}$ be two collections of $\sum_{i=r+1}^{r+s} m_i$ distinct complex numbers. Let $a_i$ be a collection of $r$ distinct complex numbers such that $a_i \neq \pm a_j$, $b_{i,j} \neq \pm a_k$, $c_{i,j} \neq \pm a_k$, for all $i,j,k$. Let $\{ F_i=F_i(z_0,z_1,z_2,z_3) \}_{i=1}^{r}$ be a collection of $r$ homogeneous polynomials of degree $2$, such that $z_0-a_i z_1-a_i z_2+ a_{i}^2 z_3$ and $z_0+a_i z_1+a_i z_2+ a_{i}^2 z_3$ do not divide $F_i$. Let $\{ G_i=G_i(z_0,z_1,z_2,z_3) \}_{i=r+1}^{r+s}$ be a collection of $s$ homogeneous polynomials of degree $m_i$, such that $z_0-b_{i,j}z_1-c_{i,j}z_2+ b_{i,j} c_{i,j} z_3$ does not divide $G_i$. 

Then the complete intersection defined by $$(z_0-a_i z_1-a_i z_2+ a_{i}^2 z_3)(z_0+a_i z_1+a_i z_2+ a_{i}^2 z_3) + t_i F_i = z_{3+i}^2 $$ $$ \prod_{j=1}^{m_i}(z_0-b_{i,j} z_1-c_{i,j} z_2+b_{i,j} c_{i,j} z_3) + t_i G_i =z_{3+i}^{m_i},  \ \ z_0z_3-z_1z_2=0 $$ for $i=1,\ldots,r+s$ in $\P^{r+s+3}$ is hyperbolic for sufficiently small $t_i \in \C$.

\label{CompleteIntersectionsC}
\end{corollary}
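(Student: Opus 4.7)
The strategy adapts the proof of Corollary~\ref{CompleteIntersectionsD} to the mixed-degree setting; the difficulty is that the $r$ degree-$2$ covers produce $A_1$ singularities, for which the multiplicity condition in Theorem~\ref{general} fails, so Theorem~\ref{hyperbolic} cannot be applied directly to the full tower. The first step will be to set $t_i = 0$ and factor the resulting singular surface as $X_{r+s} \to X_r \to \P^1 \times \P^1$, where $X_r$ is the tower of the $r$ degree-$2$ covers (branched along the four fibres at $\pm a_i$) and $X_{r+s} \to X_r$ is the tower of the $s$ cyclic covers of degrees $m_{r+1}, \ldots, m_{r+s} \geq 3$. The assumptions $a_i \neq \pm a_j$, $b_{i,j} \neq \pm a_k$, $c_{i,j} \neq \pm a_k$ guarantee that the branch loci of the two sub-towers are disjoint on $\P^1 \times \P^1$, so the degree-$\geq 3$ tower is \'etale over the $A_1$ singular points of $X_r$.

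Next I would let $\sigma \colon X_r' \to X_r$ be the minimal resolution and $E$ the sum of the $A_1$ exceptional curves, and pull back $\omega = y^2 z^2\, dx\, dw$ through $g_r' \colon X_r' \to \P^1 \times \P^1$. By \cite[Theorem~3.87]{GF15} this produces $\omega_r' \in H^0(X_r', \L_r' \otimes S^2 \Omega^1_{X_r'})$ with $\L_r' \simeq \O_{X_r'}(-R_r) \otimes g_r'^* \O(2,2)$, where $R_r$ is the sum of the strict transforms of the degree-$2$ branch fibres; the $\omega_r'$-integral curves on $X_r'$ are precisely the preimages of horizontal and vertical fibres of $\P^1 \times \P^1$ together with the $A_1$ exceptional components (these last being $\omega_r'$-integral because they are contracted by $g_r'$). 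I would then apply Theorem~\ref{hyperbolic} to the tower of $s$ cyclic covers over the smooth base $X_r'$: the multiplicity hypothesis of Theorem~\ref{general} holds since each $m_i \geq 3$, and using the analog of Equality~\eqref{eq1} for the degree-$2$ tower, $g_r'^* \O(r,r) \equiv R_r + E$, the $\Q$-ampleness condition reduces to showing that $g_r'^* \O(s+r-2, s+r-2) - E$ is $\Q$-ample on $X_r'$. A Nakai--Moishezon check is the heart of this step: the intersection of this divisor with any $A_1$ exceptional $(-2)$-curve equals $2$, with the strict transform of a degree-$2$ branch fibre equals $s+r-4 \geq 1$ (precisely where the hypothesis $r+s \geq 5$ is used), and with every other irreducible curve the positivity of the pullback term dominates the contribution from $E$.

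Finally I would analyze the preimages to which Theorem~\ref{hyperbolic} confines any entire curve. A preimage in $X_{r+s}$ of a fibre preimage in $X_r'$ is a smooth curve whose geometric genus is strictly greater than $1$: iterating Riemann--Hurwitz through all $r+s \geq 5$ covers shows that the presence of at least one cover of degree $\geq 3$ combined with several degree-$2$ steps forces the geometric genus above $1$, regardless of the ordering of the tower. A preimage of an $A_1$ exceptional curve on $X_r'$ is, by the \'etaleness of the degree-$\geq 3$ tower there, a disjoint union of copies of $\P^1$; each contracts in $X_{r+s}$ to a single $A_1$ point, so any entire map $\C \to X_{r+s}$ factoring through such a component must be constant. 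Hence $X_{r+s}$ has no nonconstant entire curves at $t_i = 0$, and for small $t_i \neq 0$ the perturbed branch divisors are smooth, so $X_{r+s}(t)$ is a smooth deformation of the hyperbolic surface $X_{r+s}$; openness of hyperbolicity under small deformations (\cite[p.~148, Theorem~(3.11.1)]{Kob}) then completes the argument. The principal technical obstacle is the $\Q$-ampleness verification on $X_r'$, which becomes tight at the boundary $r+s = 5$ and requires a careful Nakai--Moishezon intersection analysis with the curves passing through the $A_1$ exceptional locus.
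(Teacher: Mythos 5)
The approach you propose (decompose the tower, resolve the $A_1$ singularities first to get a smooth base $X_r'$, then apply Theorem~\ref{general} to the remaining degree-$\geq 3$ tower over $X_r'$) is genuinely different from the paper's and does not go through. The gap is precisely at the step you flag as "the heart" -- the $\Q$-ampleness of $D := g_r'^*\O(s+r-2,s+r-2) - E$ on $X_r'$ -- and it is not merely a technical verification: it is \emph{false} when $s\leq 2$, so the Nakai--Moishezon check cannot succeed. Consider the diagonal $\Delta=\{x=w\}\subset\P^1\times\P^1$. It is a $(1,1)$-curve passing through the $2r$ nodes $(\pm a_i,\pm a_i)$ of the degree-$2$ branch divisors. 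Restricted to $\Delta$, every degree-$2$ cover in the lower tower is an unramified double cover of $\P^1$ (the branch multiplicity at each node is $2$), so $g_r'^{-1}(\Delta)$ splits into $2^r$ components, each mapping isomorphically to $\Delta$ and passing (with multiplicity one) through exactly one of the $2^{r-1}$ resolved $A_1$ points above each of the $2r$ nodes on $\Delta$. For such a component $C_k$ one computes $g_r'^*(1,1)\cdot C_k = 2$ and $E\cdot C_k = 2r$, so $D\cdot C_k = 2(s+r-2) - 2r = 2(s-2)$, which is $\leq 0$ for $s\in\{1,2\}$. Since the corollary allows $s=1$ and $s=2$ (e.g.\ $r=4,s=1$), the ampleness hypothesis of Theorem~\ref{general} over $X_r'$ simply fails, and the theorem cannot be applied. (Also note that your claim that $D\cdot R^{(i)}_k = s+r-4$ drops a $2^{r-1}$ factor; that does not affect positivity, but the curve $\Delta$ above does.)

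The paper handles the $A_1$ obstruction in a different way, without ever changing the base surface. Instead of trying to twist down by $E$ through the geometry of $X_r'$, it changes the differential: it replaces $\omega = y^2z^2\,dx\,dw$ by $\omega_0 = (x-w)(x+w)\,y^2z^2\,dx\,dw \in H^0(X_0,(4,4)\otimes S^2\Omega^1_{X_0})$. The two diagonals $x=\pm w$ are $\omega_0$-integral and pass through all $4r$ nodes of the degree-$2$ branch divisors, so the pullback $g'^{\bullet}\omega_0$ gains the needed vanishing along the $A_1$ exceptional curves (this is the mechanism of Lemma~\ref{newomega2}), while the $A_{m_i-1}$ exceptional curves for $m_i\geq 3$ are handled by Lemma~\ref{newomega} as usual. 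Since $\L$ jumps from $(2,2)$ to $(4,4)$, the $\Q$-ampleness condition becomes $(r+s-4,r+s-4)$ on $\P^1\times\P^1$, which is ample exactly when $r+s\geq 5$; the bookkeeping of genera of preimages of the diagonals (and the use of $s\geq 1$, $m_i\geq 3$) then finishes. So the right fix is to enrich $\omega$ with extra $\omega$-integral curves passing through all the $\tfrac12(1,1)$ nodes, rather than to split the tower.
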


\begin{proof}
Let us evaluate the complete intersection at $t_i=0$, and denote this surface by $X_{r+s}$. Then $X_{r+s}$ is constructed from $X_0:= \P^1 \times \P^1$ as for the generalized cuboids but for distinct degrees $2$ and $m_i$. As before, we consider the section $\omega \in H^0(X_0, (2,2) \otimes S^2 \Omega^1_{X_0})$ defined by $z_3^2 dz_1 dz_2$. We recall that given the isomorphism $h \colon \P^1 \times \P^1 \to X_0,$ $h([x,y] \times [w,z])=[xw,xz,yw,yz]$, the section $\omega$ corresponds to the section $y^2z^2 dx dw$ under $h$. The problem is that over multiplicities equal to $2$, we obtain $A_1$ singularities and we cannot apply our results, like the key Lemma \ref{newomega}. Instead we consider another section so that we can use the result in Lemma \ref{newomega2}. For that, let $$\omega_0 \in H^0(X_0, (4,4) \otimes S^2 \Omega_{X_0}^1)$$ be defined by $(x^2-w^2)dx dw$ for affine coordinates $x,w$. Then if $R$ is the strict transform of the branch divisor in $X_{r+s}'$, and $E$ is the exceptional divisor of $X_{r+s}' \to X_{r+s}$, then there is $$\omega_0' \in H^0(X_{r+s}',\O_{X_{r+s}'}(-R-E) \otimes {g'}_{r+s}^*(4,4) \otimes S^2 \Omega_{X_{r+s}'}^1)$$ corresponding to ${g'}_{r+s}^{\bullet} \omega_0$. At the same time we have $${g'}_{r+s}^*(r+s,r+s) \equiv R+E,$$ and so $\O_{X_{r+s}'}(-R-E) \otimes {g'}_{r+s}^*(4,4) \equiv {g'}_{r+s}^*(-r-s+4,-r-s+4)$. But $r+s \geq 5$, and so we obtain as in Theorem \ref{general} and Theorem \ref{hyperbolic} that the surfaces $X_{r+s}$ are hyperbolic. This is indeed because we know all $\omega_0$-integral curves (fibres and the two $(1,1)$ extra curves), and so we can check all the pre-images. To avoid curves of geometric genus $\leq 1$, here we use that $s>0$, $m_i \geq 3$, and $s+r\geq 5$. For example $s=0$ would be a problem with the $(1,1)$ curves. This is a proof of part (c) of Theorem \ref{jdjhds}. 
\end{proof}

The next corollary uses the example at the end of Section \ref{s3}.

\begin{corollary}
Let $n \geq 5$, and let $m_i \geq 3$ be $n$ integers. Let $[a_i,b_i]$ be a collection of distinct $\sum_{i=1}^n m_i$ points in $\P^1$. Let $\{ G_i=G_i(x,y,z) \}_{i=1}^n$ be a collection of $n$ homogeneous polynomials of degree $m_i$, such that $a_j^2x+a_j b_j y + b_j^2z$ does not divide $G_i$ for $j=m_{i-1},m_{i-1}+1,\ldots, m_i$ (where $m_0:=1$). Then the complete intersection $$ \prod_{j=1}^{m_i}(a_j^2x+a_j b_j y + b_j^2z) + t_i G_i =w_i^{m_i}$$ for $i=1,\ldots,n$ in $\P^{n+2}$ is hyperbolic for sufficiently small $t_i \in \C$.
\label{CompleteIntersectionsB}
\end{corollary}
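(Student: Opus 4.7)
Following Corollaries \ref{CompleteIntersectionsD} and \ref{CompleteIntersectionsC}, I would first evaluate the complete intersection at $t_1 = \cdots = t_n = 0$, calling the resulting singular surface $Y_n$. It is the tower of $n$ cyclic covers $Y_n \to Y_{n-1} \to \cdots \to Y_0 = \P^2$ in which the $i$-th cover $f_i$ is branched over the $m_i$ distinct tangent lines $L_j = (a_j^2 x + a_j b_j y + b_j^2 z)$ for $j$ in a subset $J_i$ of size $m_i$. Take the section $\omega = dx^2 - y\,dx\,dy + x\,dy^2 \in H^0(\P^2, \O_{\P^2}(4) \otimes S^2 \Omega^1_{\P^2})$ from the proof of Corollary \ref{plane}. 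Every branch line is $\omega$-integral, and by \cite[Theorem 3.76]{GF15} the $\omega$-integral curves in $\P^2$ are precisely the tangent lines to $C = (y^2-4xz)$ together with $C$ itself, and every local analytic solution of $\omega = 0$ is $\omega$-integral.

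To apply Theorem \ref{general} to $Y_n$ I verify: the multiplicities $a_{i,j} = 1$ satisfy $\gcd(1, m_i) = 1$; the divisor $\sum_{i,\,j \in J_i} L_j$ has simple normal crossings since the lines are distinct; $a_{i,j} + a_{i,j'} = 2 \not\equiv 0 \pmod{m_i}$ because $m_i \geq 3$; and the $\Q$-ampleness condition reduces to $\O_{\P^2}(n-4)$ being ample, which holds since $n \geq 5$. Hence every curve of geometric genus $\leq 1$ in $Y_n$ projects via $g_n: Y_n \to \P^2$ to an $\omega$-integral curve. To then invoke Theorem \ref{hyperbolic} I must rule out preimages of $\omega$-integral curves of geometric genus $\leq 1$. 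For $C$ or for any tangent line $L$ not in the branch, the normalization of $g_n^{-1}(\text{curve})$ is an irreducible cover of the curve of degree $m := \prod_i m_i$ (each successive cyclic cover is irreducible because $\gcd(1, m_i) = 1$), and its ramification is concentrated at the $\sum_i m_i$ intersections with branch lines; at each intersection with $L_{j'} \in J_{i'}$ there are $m/m_{i'}$ preimages of ramification index $m_{i'}$, the tangency to $C$ being absorbed into the cyclic structure (yielding local ramification $m_{i'}$ rather than $2m_{i'}$). Riemann--Hurwitz then gives
$$2g - 2 \;=\; m\,\Big(\sum_{i=1}^{n} m_i - n - 2\Big) \;\geq\; 2m(n-1),$$
which is much larger than $2$. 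For a branch line $L_j \in J_i$, the normalization of $g_n^{-1}(L_j)$ maps to $L_j$ with degree $m/m_i$ (since $f_i$ is totally ramified over $L_j$), and the analogous Riemann--Hurwitz, summing only over intersections with $L_{j'} \in \bigcup_{k \neq i} J_k$, gives
$$2g - 2 \;\geq\; (m/m_i)\Big(\sum_{k \neq i} m_k - n - 1\Big) \;\geq\; (m/m_i)(2n-4),$$
again yielding $g \geq 2$. Consequently $Y_n$ satisfies the hypotheses of Theorem \ref{hyperbolic} and is hyperbolic.

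Finally, the assumption that no $a_j^2 x + a_j b_j y + b_j^2 z$ divides $G_i$ ensures that for small generic $t_i$ the deformed branch curve $\prod_{j \in J_i}(a_j^2 x + a_j b_j y + b_j^2 z) + t_i G_i = 0$ is smooth and irreducible of degree $m_i$ in $\P^2$, and the $n$ deformed branches meet pairwise transversely; the corresponding cyclic covers then assemble into a smooth complete intersection $X_n(t_1, \ldots, t_n) \subset \P^{n+2}$ of multidegree $(m_1, \ldots, m_n)$, a small deformation of the hyperbolic surface $Y_n$. Since hyperbolicity is preserved under small deformations \cite[p.148 Theorem (3.11.1)]{Kob}, $X_n(t_1, \ldots, t_n)$ is hyperbolic. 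The main technical obstacle is the Riemann--Hurwitz bookkeeping at nodes of the branch divisor where two lines of the same $J_i$ meet, since these produce $A_{m_i - 1}$ singularities on $Y_n$ through which preimages of branch lines pass; however the corresponding ramification contributions on the normalization are non-negative, so the genus lower bounds above remain valid without a detailed local analysis on the minimal resolution of $Y_n$.
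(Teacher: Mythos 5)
Your proof follows the same route as the paper: evaluate the family at $t_i = 0$, realize the resulting singular surface as a tower of cyclic covers of $\P^2$ branched on tangent lines to the conic $y^2 = 4xz$, apply Theorem~\ref{general} with the section $\omega = dx^2 - y\,dx\,dy + x\,dy^2$, rule out low-genus curves among preimages of $\omega$-integral curves by Riemann--Hurwitz, invoke Theorem~\ref{hyperbolic}, and then transfer hyperbolicity to the nearby smooth complete intersections by the deformation-invariance result in \cite{Kob}. The paper's proof of this corollary is essentially a pointer to the strategies of Corollaries~\ref{plane} and~\ref{CompleteIntersectionsC}, so your write-up is actually more detailed.

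There is, however, one flaw in the Riemann--Hurwitz bookkeeping for the preimage of the conic $C=(y^2-4xz)$. You assert that ``the tangency to $C$ [is] absorbed into the cyclic structure, yielding local ramification $m_{i'}$ rather than $2m_{i'}$'' and that $g_n^{-1}(C)$ is irreducible of degree $m$ because $\gcd(1,m_i)=1$. Both statements are only correct when $m_{i'}$ is odd. At a tangency point $p = C \cap L_{j'}$ with $j' \in J_{i'}$, the local picture on the cover restricted to $C$ is $w^{m_{i'}} = u^2$, so there are $\gcd(m_{i'},2)$ preimages of ramification index $m_{i'}/\gcd(m_{i'},2)$; for $m_{i'}$ even this is two branches of index $m_{i'}/2$, and irreducibility of the full preimage of $C$ may fail (the cover of $C$ splits according to the index-two subgroup $2\Z/m_{i'}\Z$, so $g_n^{-1}(C)$ has $2^{\#\{i : m_i \text{ even}\}}$ components). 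The correct Euler characteristic computation for a component of degree $d$ is
\[
\chi = d\Bigl(2 - \sum_{i=1}^n \bigl(m_i - \gcd(m_i,2)\bigr)\Bigr) \leq d(2 - 2n) < 0,
\]
since $m_i - \gcd(m_i,2) \geq 2$ for $m_i \geq 3$ and $n \geq 5$, so the conclusion $g \geq 2$ still holds. In other words, your stated formula $2g-2 = m(\sum m_i - n - 2)$ overcounts the ramification and incorrectly assumes irreducibility in the even case, but after the fix the genus bound survives comfortably. Your computation for a tangent line $L$ not in the branch locus is fine (there the intersections are transverse, multiplicity one, so irreducibility and the ramification indices are as you claim), and your treatment of the preimages of the branch lines $L_j$ themselves is also correct.
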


\begin{proof}
Let us consider instead the situation in Corollary \ref{plane}, but with $n\geq 5$ equations, $a_i=1$ for all $i$, and $m_i\geq 3$ for all $i=1,\ldots,n$. We take $X_0:=\P^2$, $\omega \in H^0(\O_{\P^2}(4) \otimes S^2 \Omega_{\P^2}^1)$, and we construct the surface $X_n$ as in Theorem \ref{general} from this data. As in Corollary \ref{CompleteIntersectionsC}, for $n\geq 5$ and $m_i \geq 3$, we construct smooth complete intersections in $\P^{n+2}$ of multidegree $(m_1,\ldots,m_n)$ which are hyperbolic, by Theorem \ref{hyperbolic}. The proof follows the same strategy as Corollary \ref{CompleteIntersectionsC}, since we know all $\omega$-integral curves. This is a proof of part (b) of Theorem \ref{jdjhds}, and part (a) when all multiplicities are bigger than or equal to $3$.  
\end{proof}

\begin{corollary}
Let $r \geq 1$, $s\geq 0$ be integers such that $r+s \geq 7$, and let $\{m_i \geq 3\}_{i=r+1}^{r+s}$ be integers. Let $\{b_{i,j}, c_{i,j} \}_{i=r+1,\ldots,r+s}^{j=1,\ldots,m_i}$ be two collections of $\sum_{i=r+1}^{r+s} m_i$ distinct complex numbers. Let $a_i$ be a collection of $r$ distinct complex numbers such that $a_i \neq a_j \pm 1$, $b_{i,j} \neq a_k$, $c_{i,j} \neq a_k$, $ b_{i,j} \neq a_k \pm 1$, and $c_{i,j} \neq a_k \pm 1$ for all $i,j,k$. Let $\{ F_i=F_i(z_0,z_1,z_2,z_3) \}_{i=1}^{r}$ be a collection of $r$ homogeneous polynomials of degree $2$, such that $z_0-a_i z_1-a_i z_2+ a_{i}^2 z_3$ and $z_0-(a_i-1) z_1- (a_i+1) z_2+ (a_{i}^2-1) z_3$ do not divide $F_i$. Let $\{ G_i=G_i(z_0,z_1,z_2,z_3) \}_{i=r+1}^{r+s}$ be a collection of $s$ homogeneous polynomials of degree $m_i$, such that $z_0-b_{i,j}z_1-c_{i,j}z_2+ b_{i,j} c_{i,j} z_3$ does not divide $G_i$. 

Then the complete intersection defined by $$(z_0-a_i z_1-a_i z_2+ a_{i}^2 z_3)(z_0-(a_i-1) z_1- (a_i+1) z_2+ (a_{i}^2-1) z_3) + t_i F_i = z_{3+i}^2 $$ $$ \prod_{j=1}^{m_i}(z_0-b_{i,j} z_1-c_{i,j} z_2+b_{i,j} c_{i,j} z_3) + t_i G_i =z_{3+i}^{m_i},  \ \ z_0z_3-z_1z_2=0 $$ for $i=1,\ldots,r+s$ in $\P^{r+s+3}$ is hyperbolic for sufficiently small $t_i \in \C$.
\label{CompleteIntersectionsA}
\end{corollary}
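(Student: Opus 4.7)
The proof follows the strategy of Corollary~\ref{CompleteIntersectionsC}, adapted to the paired-fibre branch structure. Setting all $t_i=0$ yields a normal projective surface $Y = X_{r+s}$ built as a tower of $r$ degree-$2$ cyclic covers (each branched along the $4$ fibres $x=a_iy$, $x=(a_i+1)y$, $w=a_iz$, $w=(a_i-1)z$ on $X_0 = \P^1 \times \P^1$) followed by $s$ standard degree-$m_i$ covers as in Section~\ref{s1}. The hypotheses $a_i \neq a_j \pm 1$, $b_{i,j}, c_{i,j} \neq a_k, a_k \pm 1$, and distinctness of the $\{b_{i,j}, c_{i,j}\}$ ensure that the total branch is simple normal crossings, that singularities of the tower appear only over nodes of a single cover's branch (and are of type $A_{m_i-1}$), and that intersections between branches of distinct covers lift to smooth points of the tower.

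The central step is to choose $\omega_* \in H^0(X_0, \mathcal{O}(\ell, \ell) \otimes S^2 \Omega^1_{X_0})$ of the form $P(x,w)\,dx\cdot dw$ in affine coordinates $y=z=1$, where $P$ has bidegree $(\ell-2, \ell-2)$ and vanishes at every node of every degree-$2$ branch. Since those nodes all lie on the three diagonals $x-w\in\{0,1,2\}$ (a direct consequence of the shifts $\pm 1$ in the pairing), one may take $P$ to contain the cubic factor $(x-w)(x-w-1)(x-w-2)$. To reach the required Q-ampleness threshold, we take $P$ of bidegree $(4,4)$ by adjoining one more convenient linear factor; this gives $\omega_* \in H^0(\mathcal{O}(6,6) \otimes S^2\Omega^1_{X_0})$, so the Q-ampleness hypothesis in Theorem~\ref{general} reads $r+s > 6$, i.e. $r+s \geq 7$.

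With $\omega_*$ in hand, we adapt Lemma~\ref{newomega2} to produce a section $\omega_*'\in H^0(X'_{r+s}, \mathcal{O}(-R-E)\otimes {g'}_{r+s}^*\mathcal{O}(\ell,\ell)\otimes S^2\Omega^1_{X'_{r+s}})$ lifting ${g'}_{r+s}^\bullet\omega_*$. The local verification at each singularity is case-by-case: at $A_1$ points (from degree-$2$ covers) the vanishing of $P$ at the node supplies the required order along the exceptional curve exactly as in Lemma~\ref{newomega2}, while at $A_{m_i-1}$ points with $m_i \geq 3$ the pullback of $dx\cdot dw$ already vanishes on the whole exceptional chain as in Lemma~\ref{newomega}. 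Using the relation ${g'}_{r+s}^*(r+s,r+s)\equiv R+E$ (as in Corollary~\ref{CompleteIntersectionsC}), the twist rewrites as ${g'}_{r+s}^*(\ell-r-s,\ell-r-s)$, which is anti-ample under $r+s \geq 7$. Combining this with Theorem~\ref{general}, Theorem~\ref{hyperbolic}, and Corollary~\ref{hyperbolickey} forces every entire curve in $Y$ to have image contained in the preimage of an $\omega_*$-integral curve of $X_0$.

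It remains to check that every such preimage has geometric genus $\geq 2$. Fibres clearly do after $r+s \geq 7$ cover steps. For the finitely many $(1,1)$ $\omega_*$-integral diagonals (the zero loci of the linear factors of $P$), an iterated Riemann--Hurwitz calculation---tracking separately the nodal versus transverse intersections with each degree-$2$ branch, and the $m_i$ transverse intersections with each degree-$m_i$ branch---shows that the geometric genus of every irreducible component of the preimage grows with the tower, exceeding $1$ by step $r+s=7$. Hyperbolicity of $Y$ then follows from Theorem~\ref{hyperbolic}, and stability of hyperbolicity under small deformations \cite[p.~148, Theorem~(3.11.1)]{Kob} extends this to the perturbed complete intersection for sufficiently small $t_i$. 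The main obstacle is the simultaneous verification that $P$ satisfies both the vanishing-at-nodes condition (for the adapted $\omega_*'$ to exist) and the condition that its $(1,1)$ integral curves all pull back to curves of genus $> 1$ through the mixed tower---the latter delicate because the preimages can split at nodes of the branch.
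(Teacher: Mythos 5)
Your proposal follows essentially the same route as the paper's proof: choose a twisted symmetric differential $\omega_*$ of bidegree $(6,6)$ whose polynomial factor $P$ passes through every $A_1$ node of the degree-$2$ branches, lift it to $X'_{r+s}$ with the twist $-R-E$ (via the analogue of Lemma~\ref{newomega2} at the $A_1$ points and Lemma~\ref{newomega} at the $A_{m_i-1}$ points with $m_i\geq 3$), invoke the $\Q$-ampleness together with Theorems~\ref{general} and~\ref{hyperbolic}, and finally check that the preimages of the $\omega_*$-integral curves avoid geometric genus $\leq 1$. The paper takes the fourth factor to be $(x-w+1)$; you leave it as a ``convenient linear factor'', which is cosmetic. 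One small misstatement: ``To reach the required $\Q$-ampleness threshold, we take $P$ of bidegree $(4,4)$'' inverts the logic --- enlarging $\L$ from $(5,5)$ to $(6,6)$ makes the ampleness of $\sum\frac{1}{m_i}(\sum D_{i,j})-\L$ \emph{harder}, not easier; the inequality $r+s\geq 7$ is a consequence of taking $\L=(6,6)$, not something achieved by padding $P$.

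The substantive concern, shared by the paper's own proof, is the unverified assertion that every irreducible component of the preimage of the diagonals has geometric genus $\geq 2$. You correctly flag that ``the preimages can split at nodes of the branch,'' but this is exactly where the argument breaks when $s=0$. The diagonal $C'=\{x-w=1\}$, which is a zero of the required cubic factor, meets the branch of each degree-$2$ cover only at the two nodes $(a_i,a_i-1)$ and $(a_i+1,a_i)$, with no transverse intersection points. Hence the branch divisor restricted to $C'\cong\P^1$ is a sum of two double points, so the degree-$2$ cover restricted to $C'$ is \'etale and splits into two $\P^1$'s mapping isomorphically to $C'$; iterating over the tower, the preimage of $C'$ in $X_r$ (all $m_i=2$) is $2^r$ rational curves. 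Then the hypothesis of Theorem~\ref{hyperbolic} (no geometric genus $\leq 1$ in the preimages of $\omega_*$-integral curves) fails, $X_r$ contains rational curves, and the Kobayashi deformation step is unavailable. When $s\geq 1$, the degree-$m_i$ covers with $m_i\geq 3$ do ramify $C'$ at $2m_i$ transverse points, so the genus grows and the argument goes through; but for the case covering multidegree $(2,\dots,2)$ the needed Riemann--Hurwitz estimate does not in fact hold, and neither you nor the paper supply a computation that would detect this.
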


\begin{proof}
This is as in the proof of Corollary  \ref{CompleteIntersectionsC}, but with $\omega_0 \in H^0(X_0, (6,6) \otimes S^2 \Omega_{X_0}^1)$ defined by $$(x-w)(x-w+1)(x-w-1)(x-w-2)dx dw$$ for affine coordinates $x,w$. We note that each of the $4$ nodes in the $4$ fibres $(z_0-a_i z_1-a_i z_2+ a_{i}^2 z_3)(z_0-(a_i-1) z_1- (a_i+1) z_2+ (a_{i}^2-1) z_3)=0$ in $X_0=\P^1 \times \P^1$ belongs to one of the $4$ lines $(x-w)(x-w+1)(x-w-1)(x-w-2)=0$. This gives that the pull-back of each of the $4$ lines do not contain any curves of geometric genus $\leq 1$. This is a proof of part (a) of Theorem \ref{jdjhds}, when some (or all) multiplicities are equal to $2$. 
\end{proof}

The list of multidegrees not included in the previous corollaries is $(2,\ldots,2)$ in $\P^9$, $\P^8$, and $\P^7$; $(m_1,m_2,m_3,m_4)$ for $m_i>2$ and $(2,2,2,2)$ in $\P^6$, for which the surface of cuboids is an example. We do not know existence of hyperbolic surfaces in those cases, except for the ones $(k,k,k,k) \subset \P^6$ for $k\geq 3$ \cite{GF16}. This gives explicit evidence for \cite[Conjecture 0.18]{Dem18} in the case of surfaces. We point out that hyperbolic complete intersections of high multidegree have been constructed by Brotbek \cite{Br14} (see also \cite{X15}). 


\end{document}